\documentclass[a4paper,11pt]{article}

\usepackage{amsmath,amsthm,amssymb,mathrsfs,latexsym,amsfonts}
\usepackage{graphicx,psfrag,epsfig}
\usepackage[english]{babel}
\usepackage[latin1]{inputenc}
\usepackage{pstricks}

\newtheorem{theorem}{Theorem}[section]
\newtheorem{lemma}[theorem]{Lemma}
\newtheorem{proposition}[theorem]{Proposition}

\newtheorem{corollary}[theorem]{Corollary}
\newtheorem{definition}[theorem]{Definition\rm}

\newcounter{paraga}[section]
\renewcommand{\theparaga}{{\bf\arabic{paraga}.}}
\newcommand{\paraga}{\medskip \addtocounter{paraga}{1} 
\noindent{\theparaga\ } }

\begin{document}

\def\MP{\,{<\hspace{-.5em}\cdot}\,}
\def\SP{\,{>\hspace{-.3em}\cdot}\,}
\def\PM{\,{\cdot\hspace{-.3em}<}\,}
\def\PS{\,{\cdot\hspace{-.3em}>}\,}
\def\EP{\,{=\hspace{-.2em}\cdot}\,}
\def\PP{\,{+\hspace{-.1em}\cdot}\,}
\def\PE{\,{\cdot\hspace{-.2em}=}\,}
\def\N{\mathbb N}
\def\C{\mathbb C}
\def\Q{\mathbb Q}
\def\R{\mathbb R}
\def\T{\mathbb T}
\def\A{\mathbb A}
\def\Z{\mathbb Z}
\def\demi{\frac{1}{2}}

\begin{titlepage}
\author{Abed Bounemoura~\footnote{A.Bounemoura@warwick.ac.uk, Mathematics Institute, University of Warwick}}
\title{\LARGE{\textbf{Effective stability for Gevrey and finitely differentiable prevalent Hamiltonians}}}
\end{titlepage}

\maketitle

\begin{abstract}
For perturbations of integrable Hamiltonians systems, the Nekhoroshev theorem shows that all solutions are stable for an exponentially long interval of time, provided the integrable part satisfies a steepness condition and the system is analytic. This fundamental result has been extended in two distinct directions. The first one is due to Niederman, who showed that under the analyticity assumption, the result holds true for a prevalent class of integrable systems which is much wider than the steep systems. The second one is due to Marco-Sauzin but it is limited to quasi-convex integrable systems, for which they showed exponential stability if the system is assumed to be only Gevrey regular. If the system is finitely differentiable, the author showed polynomial stability, still in the quasi-convex case. The goal of this work is to generalize all these results in a unified way, by proving exponential or polynomial stability for Gevrey or finitely differentiable perturbations of prevalent integrable Hamiltonian systems.   
\end{abstract}
 
\section{Introduction}

\paraga Consider a near-integrable Hamiltonian system, that is a perturbation of an integrable Hamiltonian system, which is of the form
\begin{equation*}
\begin{cases} 
H(\theta,I)=h(I)+f(\theta,I), \\
|h|=1, \; |f| < \varepsilon <\!\!<1. 
\end{cases}
\end{equation*}
Here $(\theta,I) \in \T^n \times \R^n$ are angle-action coordinates, and $f$ is a small perturbation, of size $\varepsilon$, in some suitable topology defined by a norm $|\,.\,|$. In the absence of perturbation, that is when $\varepsilon$ is zero, the action variables $I(t)$ are integrals of motions and all solutions are quasi-periodic. Therefore it is a natural question, which is in fact motivated by concrete problems of stability in celestial mechanics, to study the evolution of the action variables $I(t)$ after perturbation, that is for $\varepsilon>0$ but arbitrarily small.

\paraga If the system is analytic and if $h$ satisfies a steepness condition, which is a quantitative transversality condition, it is a remarkable result due to Nekhoroshev (\cite{Nek77}, \cite{Nek79}) that the action variables are stable for an exponentially long interval of time with respect to the inverse of the size of the perturbation: one has
\[ |I(t)-I_0| \leq c_1\varepsilon^b, \quad |t|\leq \exp(c_2\varepsilon^{-a}), \] 
for some positive constants $c_1,c_2,a,b$ and provided that the size of the perturbation $\varepsilon$ is smaller than a threshold $\varepsilon_0$. This is a major result in the theory of perturbations of Hamiltonian systems, and it gives complement to two previous and equally important results. The first one is that ``most" solutions (in a measure-theoretical sense) are quasi-periodic and hence stable for all time, in the sense that
\[ |I(t)-I_0|\leq c\sqrt{\varepsilon}, \quad t\in\R, \]
for some positive constant $c$, provided that the system is sufficiently regular (analytic, Gevrey or even $C^k$ for $k>2n$), $h$ satisfies a mild non-degeneracy assumption and $\varepsilon$ is smaller than a threshold $\varepsilon_0$. This is the content of KAM theory, see \cite{Kol54} for the original statement and, for instance \cite{Rus01}, \cite{Sal04} and \cite{Pop04} among the enormous literature, for various improvements regarding the hypotheses of non-degeneracy and regularity. Hence Nekhoroshev estimates give new information for solutions living on the phase space not covered by KAM theory, and even though the latter has a small measure, it is usually topologically large (at least for $n\geq 3$). The second important previous result is that there exist ``unstable" solutions, satisfying
\[ |I(\tau)-I_0|\geq 1, \quad \tau=\tau(\varepsilon)>0. \]
This was discovered by Arnold in his famous paper \cite{Arn64}, and it has become widely known as ``Arnold diffusion". Even though this phenomenon has been intensively studied, very little is known. Here Nekhoroshev estimates give an exponentially large lower bound on the time of instability $\tau(\varepsilon)$, explaining (in part) why such instability properties are so hard to detect.  

\paraga Now returning to Nekhoroshev estimates, the original proof is rather long and complicated. It is naturally divided into two parts. The first part, which is analytic, is the construction of general resonant normal forms, up to an exponentially small remainder (this is where the analyticity of the system is used), on local domains of the action space where one has a suitable control on the so-called small divisors. Then, the second part, which is geometric, consists in the construction of a partition of the action space where one can use such normal forms. This is where the steepness of the integrable system enters, basically it rules out the existence of solutions which cannot be controlled by these normal forms, so eventually the exponentially small remainders easily translate into an exponentially long time of stability for all solutions.   

\paraga It has been noticed by the Italian school (\cite{BGG85}, \cite{BG86}) that using preservation of energy, the geometric part of the proof can be simplified for the simplest steep Hamiltonians, namely strictly convex or strictly quasi-convex Hamiltonians (recall that quasi-convexity means that the energy sub-levels are convex subsets). Then, much work have been devoted to this special case. In particular, Lochak introduced in \cite{Loc92} a new method leading in particular to an extremely simple and elegant proof of these estimates under the quasi-convexity assumption. His approach only relies on averaging along periodic frequencies, which enables him to construct special resonant normal forms (periodic frequencies are just resonant frequencies of codimension-one multiplicities), and the use of the most basic result in simultaneous Diophantine approximation, namely Dirichlet's theorem, to cover the whole action space by domains where such special normal forms can be used. This also brought to light the surprising phenomenon of ``stabilization by resonances", which implies that the more resonant the initial condition is, the more stable (in a finite time-scale) the solution will be. This method had several applications, and an important one, that we shall be concerned here, was the extension of these stability estimates for non-analytic systems. Indeed, using Lochak's strategy, it was shown in \cite{MS02} that the exponential estimates are also satisfied for Gevrey regular systems, and in \cite{Bou10} it was proved that polynomial estimates hold true if the system is only of finite differentiability (this is obviously the best one can expect under such a weak regularity assumption). Let us point out that for non-analytic systems, the construction of these normal forms, which is the only new ingredient one has to add since the geometric part of the proof is insensible to the regularity of the system, is more difficult than for an analytic system (this is especially true for Gevrey systems). Indeed, one cannot simply work with $C^0$-norms (on some complex strip, then by the usual Cauchy estimates one has a control on all derivatives on smaller complex strips), so one has to work directly with all the derivatives (and moreover keep a control on the growth of these derivatives in the Gevrey case). In \cite{MS02} and \cite{Bou10}, such normal forms were constructed but only because it was enough to consider periodic frequencies, which boils down to what is classically known as a one-phase averaging.

\paraga However, all these results were restricted by the quasi-convexity hypothesis. A study of Nekhoroshev estimates under more general assumptions on the integrable part has been initiated by Niederman. First, in \cite{Nie04}, he introduced new geometric arguments based on simultaneous Diophantine approximation, leading to a great simplification in the geometric part of the proof of Nekhoroshev's result under the steepness condition. Then, in \cite{Nie07}, he realized that this method allows in fact to obtain the result for a much wider class of unperturbed Hamiltonians, which he called ``Diophantine steep", and which are prevalent (in the sense of Hunt, Sauer and York, recall that prevalence is a possible generalization of full Lebesgue measure for infinite dimensional linear spaces). However, the analytic part of Niederman's proof was still based on averaging along general frequencies and hence required the construction of general resonant normal forms (which was taken from \cite{Pos93}). In fact, in the non-convex case, a simple averaging along a periodic frequency, which corresponds to studying the dynamics in the neighbourhood of a resonance of codimension-one multiplicity, cannot be enough since solutions will necessarily explore resonances associated to different, and possibly all, multiplicities. But then in \cite{BN09} we were able to construct normal norms associated to any multiplicities by making suitable composition of periodic averagings, with periodic vectors which are independent and sufficiently close to each other. This was an extension of Lochak's method, in the sense that no small divisors were involved, only (a composition of) periodic averagings and simultaneous Diophantine approximation were used. This proof was not only simpler than the previous one, but also opened the way to several applications. For instance, in \cite{Bou09}, it was shown how one can easily obtain more general results of stability in the vicinity of linearly stable quasi-periodic invariant tori.

\paraga The aim of this paper is to extend the above results by proving stability estimates for Hamiltonian systems with a prevalent integrable part, but which are not necessarily analytic. In the Gevrey case, this will lead to exponential estimates of stability for perturbation of a generic integrable Hamiltonian, as stated below.

\begin{theorem} \label{mainth}
For $\alpha\geq 1$, consider an arbitrary $\alpha$-Gevrey integrable Hamiltonian $h$ defined on an open ball in $\R^n$. Then for almost any $\xi\in\R^n$, the integrable Hamiltonian $h_\xi(x)=h(I)-\xi.I$ is exponentially stable.
\end{theorem}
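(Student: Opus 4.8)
The plan is to split Theorem~\ref{mainth} into a quantitative part and a measure-theoretic part. The quantitative part --- where the genuinely new work of the paper lies --- is a stability theorem asserting that a Gevrey perturbation $H=h+f$ is exponentially stable whenever the integrable frequency map $\nabla h$ satisfies an explicit genericity condition, a ``Diophantine steepness'' in the spirit of \cite{Nie07}, phrased as a quantitative transversality of $\nabla h$ to the resonant subspaces together with uniform Diophantine conditions on the restricted frequencies. Granting such a theorem, Theorem~\ref{mainth} reduces to showing that for a fixed $\alpha$-Gevrey $h$ on a ball $B$, the set of $\xi\in\R^n$ for which $h_\xi$ fails this condition is Lebesgue-null; every other $\xi$ is then exponentially stable by the quantitative theorem. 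The crux of that quantitative theorem itself, as the introduction indicates, is the construction of the resonant normal forms directly in terms of all derivatives --- with control on the Gevrey growth --- since one cannot pass to a complex strip and invoke Cauchy estimates; following \cite{MS02}, \cite{Bou10}, \cite{BN09} this is achieved by composing one-phase (periodic) averagings along families of independent periodic vectors close to one another, producing normal forms adapted to every resonance multiplicity while avoiding small divisors altogether.

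For the measure-theoretic part, the key observation is that the frequency map of $h_\xi$ is the translate $\nabla h_\xi=\nabla h-\xi$; hence as $\xi$ ranges over $\R^n$ one is merely translating a fixed map, and the space of linear forms $I\mapsto-\xi\cdot I$ plays the role of the finite-dimensional probe space in the prevalence formalism of Hunt, Sauer and Yorke. After harmlessly shrinking $B$ (the quantitative theorem applies on any slightly smaller ball), its closure is compact. For each of the countably many rational subspaces $\Lambda\subset\R^n$, the ``bad'' set of $\xi$ --- those for which $\nabla h-\xi$ is not transverse to $\Lambda^{\perp}$ along $B$, or for which the restriction of $h_\xi$ to some affine slice directed by $\Lambda$ has a degenerate critical point, or for which the restricted frequencies fail the Diophantine condition --- is contained in a finite union of images of measure-zero sets under smooth maps (critical values of $\Lambda$-projections of $\nabla h$ and their higher-order analogues, together with preimages of the measure-zero set of non-Diophantine vectors through maps that are submersions for a.e.\ $\xi$). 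Each such set is Lebesgue-null by Sard's theorem, and a countable union of null sets is null, so the full bad set is null.

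The point I expect to be the main obstacle is not the existence of these genericity properties pointwise in $\xi$ but their \emph{uniformity}: the exponential estimate requires the steepness indices, the Diophantine constants and the sizes of the normal-form domains to be uniform over the base point $I\in\overline{B}$ and over all resonant subspaces of each fixed dimension. Uniformity in $I$ follows from compactness of $\overline{B}$; uniformity over subspaces requires a covering argument on the Grassmannians $G(d,n)$ combined with the quantitative regularity of $\nabla h$, so that the transversality and non-degeneracy estimates are stable under small rotations of $\Lambda$. This is also where the argument must genuinely depart from Niederman's analytic treatment, in which the bad sets are sub-analytic and their measure is controlled accordingly: here $\nabla h$ is only Gevrey, so one has to rely instead on the Morse--Sard--Thom machinery and on explicit continuity estimates for finitely many derivatives, which are insensitive to the analytic/Gevrey distinction. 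Once these uniform constants are secured for a.e.\ $\xi$, feeding them into the quantitative stability theorem yields, for each such $\xi$, constants $c_1,c_2,a,b,\varepsilon_0>0$ with $|I(t)-I_0|\leq c_1\varepsilon^{b}$ for $|t|\leq\exp(c_2\varepsilon^{-a})$ and $\varepsilon<\varepsilon_0$, which is the exponential stability of $h_\xi$.
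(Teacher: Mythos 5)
Your decomposition is exactly the one the paper uses: Theorem~\ref{mainth} is obtained by combining a quantitative stability theorem for Gevrey perturbations of integrable Hamiltonians satisfying an explicit transversality condition (Theorem~\ref{thmG}, whose proof via composed periodic averagings and independent periodic vectors you describe accurately) with the statement that for almost every $\xi$ the translate $h_\xi$ satisfies that condition (Theorem~\ref{thmpreva}). The paper simply quotes the latter from \cite{Nie07}, so your quantitative half and the overall architecture are in agreement with the text.

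The gap is in your measure-theoretic half. The Diophantine Morse condition is not the qualitative statement ``for each resonant subspace $\Lambda$, the restriction of $h_\xi$ to slices directed by $\Lambda$ has no degenerate near-critical points''; it demands a \emph{single} pair $(\gamma,\tau)$ such that the quantitative alternative $\Vert\partial_\alpha h_\Lambda\Vert>\gamma L^{-\tau}$ or $\Vert\partial_{\alpha\alpha}h_\Lambda\,\eta\Vert>\gamma L^{-\tau}\Vert\eta\Vert$ holds simultaneously for \emph{all} $L\in\N^*$ and all $\Lambda\in G^{L}(n,k)$. Your argument --- each bad set is null by Sard's theorem, and a countable union of null sets is null --- only yields, for a.e.\ $\xi$ and each fixed $\Lambda$, some constant $\gamma_\Lambda>0$ with no control as $\Lambda$ varies; the infimum of the $\gamma_\Lambda$ over the countably many subspaces may well be zero, and then no admissible $(\gamma,\tau)$ exists. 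You flag uniformity as ``the main obstacle'' but do not close it: what is actually needed is Yomdin's quantitative Morse--Sard theory, giving an explicit bound on the Lebesgue measure of the set of $\xi$ for which the alternative fails for a given $(L,\Lambda)$ at level $\gamma L^{-\tau}$ (a power of $\gamma$ times a power of $L$), which one then sums over the polynomially many subspaces in $G^{L}(n,k)$ and over $L$; the convergence of this sum is precisely what forces the threshold $\tau>2(n^2+1)$ in Theorem~\ref{thmpreva}, a constraint your sketch cannot produce. Note also that since $\nabla h_\xi=\nabla h-\xi$ and the condition only involves the first and second derivatives of $h$, the measure-theoretic statement requires no Gevrey input at all --- $C^{2n+2}$ suffices --- so the ``departure from Niederman's analytic treatment'' you anticipate there is not actually needed; the regularity issues are confined entirely to the normal-form half.
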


This will be a direct consequence of Theorems~\ref{thmpreva} and Theorem~\ref{thmG} below. This result generalizes the main results of \cite{Nie07} and \cite{BN09} which were restricted by the analyticity assumption ($\alpha=1$), and the main stability result of \cite{MS02} which was restricted by the quasi-convexity assumption (our condition on the integrable part is much more general than quasi-convexity). In the finitely differentiable case, we will obtain polynomial estimates of stability for perturbation of a generic integrable Hamiltonian.

\begin{theorem} \label{mainth2}
For $k>2n+2$, consider an arbitrary $C^k$ integrable Hamiltonian $h$ defined on an open ball in $\R^n$. Then for almost any $\xi\in\R^n$, the integrable Hamiltonian $h_\xi(x)=h(I)-\xi.I$ is polynomially stable.
\end{theorem}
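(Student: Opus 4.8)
The plan is to deduce Theorem~\ref{mainth2}, exactly as one deduces Theorem~\ref{mainth}, from two essentially independent ingredients: a measure-theoretic statement about the parameter $\xi$, and a stability statement for a fixed, sufficiently non-degenerate $C^k$ integrable Hamiltonian. The measure-theoretic part is Theorem~\ref{thmpreva}: since $\nabla h_\xi=\nabla h-\xi$, varying $\xi$ merely translates the frequency map, and the relevant non-degeneracy --- Niederman's ``Diophantine steepness'', with exponents depending only on $n$ --- is encoded by transversality conditions on the derivatives of $\nabla h$ along affine subspaces of $\R^n$. A Sard--Fubini type argument (the content of Theorem~\ref{thmpreva}) then shows that the set of $\xi\in\R^n$ for which $h_\xi=h-\xi\cdot I$ fails to be Diophantine steep has zero Lebesgue measure; it is precisely to make these transversality conditions meaningful that one needs $h$ of class $C^k$ with $k$ large. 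It then remains to prove: a Diophantine steep $C^k$ integrable Hamiltonian, with $k>2n+2$, is polynomially stable.

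For that I would follow the scheme of \cite{BN09}, keeping the geometric part and replacing the analytic part by its finitely differentiable counterpart from \cite{Bou10}. The geometric part --- the covering of the action space by domains, built from simultaneous Diophantine approximation (Dirichlet's theorem), on which one controls the relevant periodic frequencies, together with the confinement argument that uses Diophantine steepness to rule out large action drift --- involves only the integrable Hamiltonian and number theory, hence is insensitive to the regularity of $H$ and carries over verbatim. The analytic input that must be redone is the construction, on each such domain, of a resonant normal form adapted to the corresponding multiplicity. As in \cite{BN09}, this normal form is obtained not by averaging along general resonant frequencies (which involves small divisors and genuine analyticity) but by composing finitely many one-phase (periodic) averagings along independent periodic vectors chosen mutually close to one another; each such one-phase averaging is precisely the elementary $C^k$ step analysed in \cite{Bou10}, and iterating it a bounded number of times (of order $n$, one per codimension) produces a resonant normal form whose remainder is polynomially small in $\varepsilon$ rather than exponentially small.

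I would then combine the two parts as in \cite{BN09}: on each domain of the covering the polynomially small remainder, together with Diophantine steepness, confines the action variables, and patching over the covering yields
\[
|I(t)-I_0|\le c_1\,\varepsilon^{b},\qquad |t|\le c_2\,\varepsilon^{-a},
\]
for positive constants $a,b,c_1,c_2$ depending only on $n$ and $k$ and for $\varepsilon$ below a threshold --- that is, polynomial stability. Applying this to $h_\xi$ for each $\xi$ in the full-measure set produced above proves Theorem~\ref{mainth2}, and Theorem~\ref{mainth} is the analytic/Gevrey version of the same argument.

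The main obstacle is the $C^k$ construction of resonant normal forms of arbitrary multiplicity by composition of periodic averagings, with a remainder controlled by a fixed polynomial power of $\varepsilon$. In the analytic (or Gevrey) setting one works with sup-norms on complex strips and recovers all derivatives for free via Cauchy estimates, so composing averagings costs essentially nothing; here one has only finitely many derivatives and must track, step by step, how many of them are consumed by each one-phase averaging and by the shrinking of domains, while simultaneously arranging the periodic vectors to be independent, mutually close, and with controlled denominators. Making this bookkeeping of derivative losses close under the hypothesis $k>2n+2$ --- the $2n$ coming from the Diophantine-approximation and counting part, the remaining margin from one averaging step --- is the delicate point; the rest is a transcription of \cite{BN09} combined with the one-phase estimates of \cite{Bou10}.
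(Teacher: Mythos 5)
Your proposal is correct and follows essentially the same route as the paper: Theorem~\ref{mainth2} is obtained by combining the prevalence statement (Theorem~\ref{thmpreva}) with a stability theorem for Diophantine Morse $C^k$ integrable Hamiltonians (Theorem~\ref{thmC}), the latter proved by keeping the regularity-insensitive geometric part of \cite{BN09} and replacing the analytic normal forms by compositions of the one-phase $C^k$ periodic averagings of \cite{Bou10} (Lemmas~\ref{lemmelin2}--\ref{lemmelincomp2} and Proposition~\ref{propcomp2}). The only slight imprecision is in the bookkeeping of the regularity threshold: in the paper the $2n+2$ is consumed entirely by the prevalence theorem, while the averaging part only needs $k\geq k^*n+1$ with $k^*$ governing the polynomial stability exponent.
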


Once again, this will be a direct consequence of Theorems~\ref{thmpreva} and Theorem~\ref{thmC} below, and the above result extends the main result of \cite{Bou10} which was only valid for quasi-convex integrable systems.

\section{Main results}

In order to state our results, we now describe our setting more precisely. We let $B_R=B(0,R)$ be the open ball of $\R^n$, centered at the origin, of radius $R>0$ with respect to the supremum norm $|\,.\,|$. Our phase space will be the domain $\mathcal{D}_R=\T^n \times B_R$.

\paraga Let us first explain our prevalent condition on the unperturbed Hamiltonian $h$, which comes from \cite{BN09}. Let $G(n,k)$ be the Grassmannian of all vector subspaces of $\R^n$ of dimension $k$. We equip $\R^n$ with the Euclidean scalar product, $\Vert\,.\,\Vert$ stands for the Euclidean norm, and given an integer $L\in\N^*$, we define $G^{L}(n,k)$ as the subset of $G(n,k)$ consisting of those subspaces whose orthogonal complement can be spanned by vectors $k\in\Z^n\setminus\{0\}$ with $|k|_1\leq L$, where $|\,.\,|_1$ is the $\ell^1$-norm.   

\begin{definition} \label{sdm}
A function $h\in C^2(B_R)$ is said to be Diophantine Morse if there exist $\gamma>0$ and $\tau \geq 0$ such that for any $L\in\N^*$, any $k\in\{1,\dots,n\}$ and any $\Lambda\in G^{L}(n,k)$, there exists $\left(e_1,\dots,e_k\right)$ (resp. $\left(f_1,\ldots,f_{n-k}\right)$), an orthonormal basis of $\Lambda$ (resp. of $\Lambda^\perp$), such that the function $h_\Lambda$ defined on $B_R$ by 
\[ h_\Lambda(\alpha,\beta)=h\left(\alpha_1 e_1+\dots+\alpha_k e_k+\beta_1f_1+\dots+\beta_{n-k} f_{n-k}\right), \]
satisfies the following: for any $(\alpha,\beta) \in B_R$, 
\[ \Vert\partial_\alpha h_\Lambda(\alpha,\beta)\Vert \leq \gamma L^{-\tau} \Longrightarrow \Vert \partial_{\alpha\alpha} h_\Lambda(\alpha,\beta).\eta\Vert>\gamma L^{-\tau}\Vert\eta\Vert \]
for any $\eta \in \R^n\setminus\{0\}$.
\end{definition} 

In other words, for any $(\alpha,\beta) \in B_R$, we have the following alternative: either $\Vert \partial_\alpha h_\Lambda(\alpha,\beta)\Vert>\gamma L^{-\tau}$ or $\Vert\partial_{\alpha\alpha} h_\Lambda(\alpha,\beta).\eta\Vert>\gamma L^{-\tau}\Vert\eta\Vert$ for any $\eta \in \R^n\setminus\{0\}$. This technical definition is basically a quantitative transversality condition which is stated in adapted coordinates. It is inspired on the one hand by the steepness condition introduced by Nekhoroshev (\cite{Nek77}) where one has to look at the projection of the gradient map $\nabla h$ onto affine subspaces, and on the other hand by the quantitative Morse-Sard theory of Yomdin (\cite{Yom83}, \cite{YC04}) where critical or ``nearly-critical" points of $h$ have to be quantitatively non degenerate. It is in fact equivalent to the condition introduced by Niederman in \cite{Nie07}: there the author considered the subset $G_{L}(n,k)$ of $G(n,k)$ consisting of those subspaces which can be spanned by vectors $k\in\Z^n\setminus\{0\}$ with $|k|_1\leq L$, but one can check that $G^{L}(n,k)$ is included in $G_{L^k}(n,k)$ (similarly, $G_{L}(n,k)$ is included in $G^{L^k}(n,k)$). Hence we will stick with the terminology ``Diophantine Morse" introduced in \cite{Nie07}, and the equivalent term ``Simultaneous Diophantine Morse" introduced in \cite{BN09} will not be used any more.

The set of Diophantine Morse functions on $B_R$ with respect to $\gamma>0$ and $\tau \geq 0$ will be denoted by $DM_{\gamma}^{\tau}(B_R)$, and we will also use the notations 
\[ DM^{\tau}(B_R)=\bigcup_{\gamma>0}DM_{\gamma}^{\tau}(B_R), \quad  DM(B_R)=\bigcup_{\tau \geq 0}DM^{\tau}(B_R). \]
We recall the following two results from \cite{Nie07} (see also \cite{BN09}).

\begin{theorem}\label{thmpreva}
Let $\tau>2(n^2+1)$ and $h\in C^{2n+2}(B_R)$. Then for Lebesgue almost all $\xi \in \R^n$, the function $h_\xi(I)=h(I)-\xi.I$ belongs to $DM^{\tau}(B_R)$.  
\end{theorem}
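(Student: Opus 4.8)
The plan is to show that the complement of $DM^{\tau}(B_R)$ in the family $\{h_\xi\}_{\xi\in\R^n}$ is Lebesgue-null by a direct measure estimate in the parameter $\xi$, using a countable union over the integer parameter $L$, the dimension $k$, and the subspaces $\Lambda\in G^L(n,k)$. Since for fixed $\Lambda$ the basis vectors $(e_1,\dots,e_k,f_1,\dots,f_{n-k})$ can be chosen once and for all (a fixed orthonormal basis of $\R^n$ adapted to $\Lambda$), and since $\partial_{\alpha\alpha}(h_\xi)_\Lambda=\partial_{\alpha\alpha}h_\Lambda$ does not depend on $\xi$ (the linear term $-\xi.I$ disappears upon taking the Hessian), the only $\xi$-dependence enters through $\partial_\alpha(h_\xi)_\Lambda(\alpha,\beta)=\partial_\alpha h_\Lambda(\alpha,\beta)-\pi_\Lambda(\xi)$, where $\pi_\Lambda$ denotes orthogonal projection onto $\Lambda$ expressed in the chosen basis. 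Thus the ``bad'' set of parameters for a given $(L,k,\Lambda)$ is
\[
\mathcal{B}_{L,k,\Lambda}=\bigl\{\xi : \exists\,(\alpha,\beta)\in B_R,\ \|\partial_\alpha h_\Lambda(\alpha,\beta)-\pi_\Lambda(\xi)\|\leq \gamma L^{-\tau}\ \text{and}\ \det\!\bigl(\text{restriction of }\partial_{\alpha\alpha}h_\Lambda\bigr)\ \text{small}\bigr\},
\]
and the key point is to bound its measure.

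First I would fix $\gamma$ and work with $DM^{\tau}_\gamma$, later letting $\gamma\to 0$ along a sequence; it suffices to prove the measure of $\bigl(\bigcup_{L,k,\Lambda}\mathcal{B}_{L,k,\Lambda}\bigr)\cap B$ tends to $0$ as $\gamma\to 0$, for every bounded $B\subset\R^n$. For fixed $(L,k,\Lambda)$, the estimate has two regimes according to the size of the smallest singular value of $\partial_{\alpha\alpha}h_\Lambda(\alpha,\beta)$ acting on $\R^n$ (here one uses $h\in C^{2n+2}\subset C^3$ so that $\partial_{\alpha\alpha}h_\Lambda$ is $C^1$, and Yomdin-type quantitative Morse--Sard estimates apply to the map $(\alpha,\beta)\mapsto\partial_\alpha h_\Lambda(\alpha,\beta)$, which is $C^{2n+1}$). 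On the region where the Hessian is uniformly non-degenerate (singular values $>\gamma L^{-\tau}$), the alternative in Definition~\ref{sdm} is automatically satisfied, so these points contribute nothing. On the complementary region — the near-critical set of the gradient map where some singular value is $\leq\gamma L^{-\tau}$ — one invokes the quantitative Morse--Sard theorem of Yomdin (\cite{Yom83},\cite{YC04}): the image under $\partial_\alpha h_\Lambda$ of the set of near-critical points has Lebesgue measure in $\R^k$ bounded by $C(\gamma L^{-\tau})^{?}$ with an exponent depending on $k$, $n$ and the order of differentiability. Since $\mathcal{B}_{L,k,\Lambda}$ is contained in $\pi_\Lambda^{-1}$ of a $\gamma L^{-\tau}$-neighbourhood of that image, intersecting with the fixed bounded $B$ gives
\[
\mathrm{Leb}_n\bigl(\mathcal{B}_{L,k,\Lambda}\cap B\bigr)\leq C(B)\,(\gamma L^{-\tau})^{\theta}
\]
for a suitable $\theta>0$; the extraction of the optimal $\theta$ as a function of the regularity $k=2n+2$ and the dimensions is where the hypothesis $\tau>2(n^2+1)$ must be calibrated.

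Summing over the subspaces, the cardinality of $G^L(n,k)$ is polynomial in $L$: there are at most $O(L^{n})$ lattice vectors of $\ell^1$-norm $\leq L$, hence at most $O(L^{n(n-k)})\leq O(L^{n^2})$ choices of $\Lambda$. Therefore
\[
\sum_{k=1}^{n}\sum_{\Lambda\in G^L(n,k)}\mathrm{Leb}_n\bigl(\mathcal{B}_{L,k,\Lambda}\cap B\bigr)\leq C(B)\,n\,L^{n^2}\,(\gamma L^{-\tau})^{\theta}=C'(B)\,\gamma^{\theta}\,L^{\,n^2-\tau\theta},
\]
and one needs $n^2-\tau\theta<-1$, i.e. $\tau>(n^2+1)/\theta$, for the sum over $L\in\N^*$ to converge; the condition $\tau>2(n^2+1)$ corresponds to $\theta=1/2$, which is precisely the Morse--Sard exponent one gets for the near-critical values of a map into $\R^k$ under the stated regularity (one loses a square root because the relevant neighbourhood is two-sided in the Hessian direction). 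Thus $\mathrm{Leb}_n\bigl(\bigcup_{L}\bigcup_{k,\Lambda}\mathcal{B}_{L,k,\Lambda}\cap B\bigr)\leq C''(B)\gamma^{\theta}\sum_L L^{n^2-\tau\theta}\to 0$ as $\gamma\to 0$; taking $B=B(0,m)$ and letting $m\to\infty$, the set of $\xi$ with $h_\xi\notin DM^{\tau}(B_R)$ is contained in a countable union of null sets, hence null.

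The main obstacle I anticipate is the quantitative Morse--Sard step: one must produce an explicit, uniform (in $L$ and $\Lambda$) bound on the measure of the near-critical values of $(\alpha,\beta)\mapsto\partial_\alpha h_\Lambda(\alpha,\beta)$ with the right power of $\gamma L^{-\tau}$ and with constants controlled only by a $C^{2n+2}$-norm of $h$ on $B_R$ (which bounds all the $C^{2n+1}$-norms of $\partial_\alpha h_\Lambda$ uniformly over the orthonormal changes of basis). This requires the finite-smoothness version of Yomdin's theory, and the bookkeeping that matches the differentiability order $k=2n+2$ with the exponent $\theta=1/2$ and hence with the threshold $\tau>2(n^2+1)$ is the delicate part; everything else is a routine covering-and-summation argument.
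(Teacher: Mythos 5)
First, a point of reference: the paper itself contains no proof of Theorem~\ref{thmpreva} --- it is recalled from \cite{Nie07} (see also \cite{BN09}) --- so your attempt has to be measured against Niederman's original argument. Your architecture does match his at the top level: fix $\gamma$ and reduce to $DM^{\tau}_{\gamma}$; note that $\partial_{\alpha\alpha}(h_\xi)_\Lambda$ is $\xi$-independent while $\partial_\alpha(h_\xi)_\Lambda$ is translated by $\pi_\Lambda(\xi)$; bound the bad parameter set for each $(L,k,\Lambda)$; count the subspaces (your $O(L^{n^2})$ is correct); and sum over $L$ under a condition of the shape $\tau\theta>n^2+1$. The quantifier structure (letting $\gamma\to0$ at the end) is also sound.

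However, the step you explicitly defer --- the uniform quantitative Morse--Sard estimate --- is the entire content of the theorem, and as sketched it does not go through. Three concrete problems. (1) A bound on the Lebesgue \emph{measure} of the image of the near-degenerate set is useless here: $\mathcal{B}_{L,k,\Lambda}$ is the preimage of a $\gamma L^{-\tau}$-\emph{neighbourhood} of that image, and a set of arbitrarily small (even zero) measure can have a neighbourhood of full measure. What Yomdin's theory actually supplies, and what the proof needs, is a bound on the $\epsilon$-entropy (covering number) of the set of near-critical values; only from a covering by $N$ balls of radius $\delta$ do you get that the $\delta$-neighbourhood has measure $\lesssim N\delta^{k}$. (2) The degeneracy in Definition~\ref{sdm} concerns only the $k\times k$ block $\partial_{\alpha\alpha}h_\Lambda$, whereas your map $(\alpha,\beta)\mapsto\partial_\alpha h_\Lambda(\alpha,\beta)$ has an $n$-dimensional domain; the relevant object is the family of maps $\alpha\mapsto\partial_\alpha h_\Lambda(\alpha,\beta)$ indexed by $\beta$, and taking the union over $\beta$ of slice-wise estimates costs a factor of order $\delta^{-(n-k)}$ after discretisation. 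This loss must be absorbed into the exponent, which is precisely where the final threshold on $\tau$ comes from, and your sketch ignores it. (3) The exponent $\theta=1/2$ is reverse-engineered from the stated threshold rather than derived; the heuristic ``one loses a square root because the neighbourhood is two-sided in the Hessian direction'' is not a derivation, and the actual exponent in \cite{Nie07} emerges from Yomdin's entropy estimate for $C^{2n+2}$ functions combined with the bookkeeping in (1)--(2). Until that lemma is stated and proved with constants uniform in $\Lambda$ and $L$ (depending only on the $C^{2n+2}$-norm of $h$), what you have is a correct plan of attack, not a proof.
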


We already mentioned that there is a good notion of ``full measure" in an infinite dimensional vector space, which is called prevalence (see \cite{OY05} and \cite{HK10} for nice surveys), and the previous theorem has the following immediate corollary.

\begin{corollary}\label{corpreva}
For $\tau>2(n^2+1)$, $DM^{\tau}(B_R)$ is prevalent in $C^{2n+2}(B_R)$.  
\end{corollary}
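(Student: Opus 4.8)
The plan is to verify directly the definition of prevalence of Hunt, Sauer and Yorke, feeding in Theorem~\ref{thmpreva} as the essential analytic input. Recall that a Borel subset $S$ of a separable Fréchet space $V$ is \emph{prevalent} if there is a compactly supported Borel probability measure $\mu$ on $V$ such that, for every $v\in V$, one has $v+w\in S$ for $\mu$-almost every $w\in V$; equivalently, the complement $V\setminus S$ is \emph{shy} with witnessing measure $\mu$. Here the ambient space is $V=C^{2n+2}(B_R)$, which is a separable Fréchet space for the seminorms $h\mapsto\sup_{\overline{B_{R-1/m}}}|\partial^{\beta}h|$ with $|\beta|\le 2n+2$ and $m\in\N^*$, and the candidate set is $S=DM^{\tau}(B_R)$.

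For the measure I would use the $n$-dimensional ``probe space'' $W=\{\,\ell_\xi:\xi\in\R^n\,\}\subset V$, where $\ell_\xi(I)=-\xi.I$, and let $\mu$ be the push-forward of the uniform probability measure on the closed Euclidean unit ball of $\R^n$ under the linear isomorphism $\xi\mapsto\ell_\xi$. Since $\xi\mapsto\ell_\xi$ is continuous and the closed unit ball is compact, $\mu$ is a compactly supported Borel probability measure on $V$. Now fix any $h\in V$ and note that $h+\ell_\xi=h-\xi.I=h_\xi$; thus Theorem~\ref{thmpreva} asserts precisely that $\{\xi\in\R^n:h+\ell_\xi\notin DM^{\tau}(B_R)\}$ is Lebesgue-negligible in $\R^n$, hence has measure zero inside the unit ball. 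Transporting this null set through the definition of $\mu$ gives $\mu(\{w\in V:h+w\notin DM^{\tau}(B_R)\})=0$, which is exactly the required condition (taking $h=-v$ shows $\mu\big((V\setminus DM^{\tau}(B_R))+v\big)=0$ for every $v$, i.e. $V\setminus DM^{\tau}(B_R)$ is shy).

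The one point that still needs care is the measurability of $DM^{\tau}(B_R)$, and this is where I expect the only (modest) technical work to lie, because the Diophantine Morse alternative is phrased with strict inequalities over the \emph{open} ball $B_R$. I would argue as follows. For a fixed $\Lambda\in G^{L}(n,k)$ the quantity $\Vert\partial_\alpha h_\Lambda(p)\Vert$ and the smallest singular value $\sigma_{\min}(\partial_{\alpha\alpha}h_\Lambda(p))$ do not depend on the choice of adapted orthonormal bases (they depend only on $\Lambda$ and the image point), and they depend continuously on the pair $(h,p)\in C^{2n+2}(B_R)\times B_R$. Hence, for fixed $\gamma>0$, $L$, $k$, $\Lambda$ and $m\in\N^*$, the set of $h$ for which there exists $p\in\overline{B_{R-1/m}}$ with
\[ \max\!\big(\Vert\partial_\alpha h_\Lambda(p)\Vert,\ \sigma_{\min}(\partial_{\alpha\alpha}h_\Lambda(p))\big)\le\gamma L^{-\tau} \]
is closed, by compactness of $\overline{B_{R-1/m}}$; letting $m$ vary, the set of $h$ that fail the alternative somewhere in $B_R$ for this triple $(L,k,\Lambda)$ is $F_\sigma$, so its complement is $G_\delta$. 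Intersecting over the countably many admissible triples (recall $G^{L}(n,k)$ is finite) shows $DM_{\gamma}^{\tau}(B_R)$ is Borel, and since $DM^{\tau}(B_R)=\bigcup_{m\in\N^*}DM_{1/m}^{\tau}(B_R)$, the set $DM^{\tau}(B_R)$ is Borel as well. Combined with the preceding paragraph this proves that $DM^{\tau}(B_R)$ is prevalent. In short, the corollary is soft once Theorem~\ref{thmpreva} is granted: all the real content sits in that theorem, and the only thing to supply is the correct finite-dimensional probing direction — the linear forms $\xi.I$ — together with the routine Borel bookkeeping above.
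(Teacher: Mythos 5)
Your proof is correct and follows exactly the route the paper intends: the paper treats the corollary as an immediate consequence of Theorem~\ref{thmpreva}, the point being precisely that the $n$-dimensional space of linear forms $\xi\mapsto -\xi.I$ is a probe witnessing prevalence in the sense of Hunt--Sauer--Yorke. The only thing you add beyond the paper's (non-)proof is the Borel measurability of $DM^{\tau}(B_R)$, which is a legitimate and correctly handled technical point that the paper silently omits.
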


\paraga Now let us introduce our regularity assumption, starting with the Gevrey case. Given $\alpha \geq 1$ and $L>0$, a real-valued function $H\in C^{\infty}(\mathcal{D}_R)$ is $(\alpha,L)$-Gevrey if, using the standard multi-index notation, we have
\[ |H|_{G^{\alpha,L}(\mathcal{D}_R)}=\sum_{l\in \N^{2n}}L^{|l|\alpha}(l!)^{-\alpha}|\partial^l H|_{C^{0}(\mathcal{D}_R)} < \infty \]
where $|\,.\,|_{C^{0}(\mathcal{D}_R)}$ is the usual supremum norm for functions on $\mathcal{D}_R$. The space of such functions, with the above norm, is a Banach algebra that we denote by $G^{\alpha,L}(\mathcal{D}_R)$, and in the sequel we shall simply write $|\,.\,|_{\alpha,L}=|\,.\,|_{G^{\alpha,L}(\mathcal{D}_R)}$. Analytic functions are a particular case of Gevrey functions, as one can check that $G^{1,L}(\mathcal{D}_R)$ is exactly the space of bounded real-analytic functions on $\mathcal{D}_R$ which extend as bounded holomorphic functions on the complex domain
\[ V_L(\mathcal{D}_{R})=\{(\theta,I)\in(\C^n/\Z^n)\times \C^{n} \; | \; |\mathcal{I}(\theta)|<L,\;d(I,B_R)<L\}, \] 
where $\mathcal{I}(\theta)$ is the imaginary part of $\theta$, $|\,.\,|$ the supremum norm on $\C^n$ and $d$ the associated distance on $\C^n$.

\paraga Therefore we shall consider a Hamiltonian
\begin{equation}\label{HamG}
\begin{cases} \tag{$\ast$}
H(\theta,I)=h(I)+f(\theta,I), \quad (\theta,I)\in \mathcal{D}_R, \\
|h|_{\alpha,L}=1, \; |f|_{\alpha,L} < \varepsilon. 
\end{cases}
\end{equation}
Our main result in the Gevrey case is the following.

\begin{theorem}\label{thmG}
Let $H$ be as in (\ref{HamG}), and assume that the integrable part $h$ belongs to $DM_{\gamma}^{\tau}(B)$, with $\tau\geq 2$ and $\gamma\leq 1$. Let us define
\[ a=b=3^{-1}(2(n+1)\tau)^{-n}.\]
Then there exists a constant $\varepsilon_0>0$, depending on $n, R, L, \alpha, \gamma$ and $\tau$, such that if $\varepsilon \leq \varepsilon_0$, for every initial action $I(0) \in B_{R/2}$ the following estimates
\[ |I(t)-I(0)| < (n+1)^2\varepsilon^b, \quad |t| \leq \exp(\varepsilon^{-\alpha^{-1}a}), \]
hold true.
\end{theorem}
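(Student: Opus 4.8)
The plan is to follow the two-part structure of Nekhoroshev's theorem, replacing the analytic estimates with Gevrey ones and the steepness geometry with the Diophantine Morse condition, exactly along the line of \cite{MS02} (for the Gevrey normal forms) crossed with \cite{BN09} (for the geometric part in the Diophantine Morse setting). The key technical device, borrowed from Lochak and extended in \cite{BN09}, is that one never needs general resonant normal forms: it suffices to average along \emph{periodic} frequency vectors, and to reach a resonance of multiplicity $k$ one composes $k$ such periodic averagings along integer vectors that are independent and pairwise close. Thus the first step is an \emph{analytic lemma}: given a periodic vector $\omega=T^{-1}k$ with $k\in\Z^n$, $|k|_1$ controlled by some $L'\le L$, and period $T$, one shows that in a suitable neighbourhood of the corresponding resonant surface one can conjugate $H$, by a Gevrey-close-to-identity symplectic transformation, to $h+g+f^*$ where $g$ Poisson-commutes with $\langle k,\cdot\rangle$ and $|f^*|$ is exponentially small in a power of $T$ (more precisely of order $\exp(-c\,T^{-1/\alpha}\dots)$ after optimising the number of averaging steps, which is where the Gevrey index $\alpha$ enters and downgrades $\varepsilon^{-a}$ to $\varepsilon^{-a/\alpha}$). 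This is precisely the one-phase Gevrey averaging of \cite{MS02}; one has to work directly with all derivatives and track the $(l!)^{\alpha}$ weights rather than using Cauchy estimates on complex strips.

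Second, one iterates this one-phase construction to get, for each $k\in\{1,\dots,n\}$ and each $\Lambda\in G^{L'}(n,k)$, a normal form adapted to the resonant module $\Lambda^\perp\cap\Z^n$: composing the periodic averagings along a well-chosen family of integer vectors spanning $\Lambda^\perp$ that are close to one another, one obtains $H\circ\Phi=h+g_\Lambda+f^*_\Lambda$ with $g_\Lambda$ in involution with all of $\Lambda^\perp$, still with an exponentially small remainder and a Gevrey-small transformation, valid on a domain which is a neighbourhood of the part of the resonant surface of $\Lambda$ where the small divisors are controlled. This is the content of the normal form theorem of \cite{BN09} transported to the Gevrey category; the compositions must be organised so that the loss of analyticity width at each step is summable and the final remainder still beats $\varepsilon^{1/2}$.

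Third comes the \emph{geometric part}, which is insensitive to regularity and can be taken essentially verbatim from \cite{Nie07}, \cite{BN09}: using Dirichlet's simultaneous Diophantine approximation one covers $B_{R/2}$ by the resonant domains above, and the Diophantine Morse hypothesis $h\in DM^\tau_\gamma(B)$ guarantees that along every such domain the dynamics of the normal form is confined — either the averaged gradient $\partial_\alpha h_\Lambda$ is bounded below (drift along the fast directions is impossible by energy/size considerations) or the Hessian $\partial_{\alpha\alpha}h_\Lambda$ is nondegenerate (so $h_\Lambda$ behaves locally like a nondegenerate function of the resonant variables and the orbit stays trapped near a level set). A continuity/compactness argument then patches the local confinements into a global one, and the total drift over the time $\exp(\varepsilon^{-a/\alpha})$ is bounded by $(n+1)^2\varepsilon^b$; the explicit value $a=b=3^{-1}(2(n+1)\tau)^{-n}$ comes from tracking how the exponents degrade through the $n$ nested averagings and the Diophantine approximation, exactly as in \cite{BN09}, with the single new feature that the exponential time has exponent $\varepsilon^{-a/\alpha}$ rather than $\varepsilon^{-a}$ because each averaging step in the Gevrey case gains $\exp(-cN^{-1/\alpha})$ instead of $\exp(-cN^{-1})$ after optimising the order $N$ of the remainder.

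I expect the main obstacle to be the \emph{Gevrey one-phase averaging lemma with quantitative control suitable for composition}: one needs not merely that a single periodic averaging produces an exponentially small remainder, but that the Gevrey norm of the transformation, the shrinkage of the Gevrey width $L$, and the constants in the remainder are explicit enough and behave well enough that $n$ such steps can be composed (with the intermediate Hamiltonians no longer of the simple form $h+f$ but $h+g+f^*$) without the accumulated losses destroying the estimate. In the analytic case this is handled cleanly because after each step one re-derives derivative bounds by Cauchy estimates on a slightly smaller strip; in the Gevrey case there is no such shortcut and one must propagate the weighted-derivative bounds through each composition, which is the heart of the matter and the reason the argument is genuinely more delicate than its analytic counterpart.
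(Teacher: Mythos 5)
Your proposal follows essentially the same route as the paper: the Gevrey one-phase periodic averaging of Marco--Sauzin, composed $n$ times along independent, pairwise-close periodic vectors to reach resonances of arbitrary multiplicity (the ``normal form with symmetry'' observation that $\{\tilde f,l_j\}=0$ is preserved by averaging along $l_{j-1}$), followed by the trajectory-based geometric argument of \cite{BN09} taken essentially verbatim, with the genuinely new work being the propagation of Gevrey weighted-derivative bounds and widths $L_i$ through the compositions --- which you correctly identify as the heart of the matter. One minor caveat: the confinement mechanism in the geometric part is not ``trapping near a level set of $h_\Lambda$'' via the nondegenerate Hessian (there is no energy argument outside the quasi-convex case), but an inductive trap in which any drift along a resonant plane forces, through the quantitative steepness property derived from the Diophantine Morse condition, the capture of a new independent periodic vector and hence passage to a resonance of lower multiplicity until the solution is ``restrained''; since you defer to the cited references for this part, as the paper itself does, this does not affect the validity of the plan.
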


For $\alpha=1$, we exactly recover the main theorem of \cite{BN09} including the value of the exponents, therefore the latter result is generalized to the Gevrey classes. Moreover, quasi-convex Hamiltonians are a very particular case of our class of Morse Diophantine Hamiltonians, hence the stability result of \cite{MS02} is also generalized, but not with the same exponents (we did not try to improve our exponents). Let us now explain several consequences of our result.

First, note that the only property used on the integrable part $h$ to derive these estimates is a specific steepness property, therefore the proof is also valid assuming a Diophantine steepness condition as in \cite{Nie07}, which is much more general than the original steepness condition of Nekhoroshev. Indeed, the class of Diophantine Morse functions (and {\it a fortiori} the class of Diophantine steep functions) contains fairly degenerate Hamiltonians, as for instance linear Hamiltonians with a Diophantine frequency, which of course are far from being steep. As a direct consequence, our main theorem also gives an alternative proof of exponential stability in the neighbourhood of a Gevrey Lagrangian quasi-periodic invariant torus, a fact which was only recently proved by Mitev and Popov in \cite{MP10} by the construction a Gevrey Birkhoff normal form.

Then, as it was proved in \cite{Bou09}, the method we are using is relatively intrinsic and does not depend much on the choice of coordinates. This remark is particularly useful when studying the stability in the neighbourhood of an elliptic fixed point, and more generally in the neighbourhood of a linearly stable lower-dimensional torus, under the common assumptions of isotropicity and reducibility (which are automatic for a fixed point or a Lagrangian torus). As in \cite{Bou09}, one can easily prove results of exponential stability in the Gevrey case under an appropriate Diophantine condition, therefore extending the results of exponential stability obtained in \cite{Bou09} which were valid in the analytic case. This also gives an extension of the stability result of \cite{MP10} which is only available for a Gevrey Lagrangian torus.

In \cite{Bou09}, using the idea introduced by Morbidelli and Giorgilli (\cite{MG95}) to combine Birkhoff normal forms and Nekhoroshev estimates, we also had results of super-exponential stability under a Diophantine condition on the frequency and a prevalent condition on the formal series of Birkhoff invariants. Using the Gevrey Birkhoff normal form of \cite{MP10}, we can also extend this super-exponential stability result to Gevrey classes, but only for a Lagrangian torus. For a more general linearly stable torus (isotropic, reducible), the existence of a Gevrey Birkhoff normal form undoubtedly holds true but it is still missing.   

As a last remark, we would like to point out that one can also extend a fairly different result of stability, which is due to Berti, Bolle and Biasco (\cite{BBB03}). This concerns perturbations of {\it a priori} unstable Hamiltonians systems, which have intensely studied since instability properties in this context are much more simple to exhibit. In the analytic case, if the size of the perturbation is $\mu$, it was proved in \cite{BBB03} that the optimal time of instability is $\tau(\mu) \simeq \mu^{-1}\ln\mu^{-1}$. The upper bound $\tau(\mu) \lesssim \mu^{-1}\ln\mu^{-1}$ follows from a specific construction of an unstable solution, while the lower bound $\tau(\mu) \gtrsim \mu^{-1}\ln\mu^{-1}$ was a consequence of a stability result, where the analyticity of the system was only necessary to apply Nekhoroshev estimates both in the quasi-convex and steep case on certain regions of the phase space (but because of the presence of ``hyperbolicity", the global stability time is far from being exponentially large). In \cite{BP10}, we introduced yet another technique (which pertains more to dynamical systems, as opposed to the variational arguments of \cite{BBB03}) to construct a solution for which $\tau(\mu) \lesssim \mu^{-1}\ln\mu^{-1}$, but only the Gevrey case. Now having at our disposal Nekhoroshev estimates in the Gevrey case for both quasi-convex and steep integrable systems, this implies that the lower bound $\tau(\mu) \gtrsim \mu^{-1}\ln\mu^{-1}$ can also be obtained and that the time of instability $\tau(\mu) \simeq \mu^{-1}\ln\mu^{-1}$ is also optimal in the Gevrey case (in fact, using Theorem~\ref{thmC} below, this is also true if the system is $C^k$ for $k$ large enough). This justify the optimality we claimed in \cite{BP10}. 

\paraga Let us now explain our result in the finitely differentiable case. Here we assume that $H$ is of class $C^k$, {\it i.e.} it is $k$-times differentiable and all its derivatives up to order $k$ extend continuously to the closure $\overline{\mathcal{D}}_R$. In order to have non-trivial results, we shall assume a minimal amount of regularity, that is $k\geq n+1$ and it will convenient to introduce another parameter of regularity $k^*\in\N^*$ satisfying $k\geq k^*n+1$. We denote by $C^k(\mathcal{D}_R)$ the space of functions of class $C^k$ on $\mathcal{D}_R$, which is a Banach algebra with the norm
\[ |H|_{C^k(\mathcal{D}_R)}=\sum_{|l|\leq k}(l!)^{-1}|\partial^l H|_{C^{0}(\mathcal{D}_R)}, \]
where we have used the standard multi-index notation and where $|\,.\,|_{C^{0}(\mathcal{D}_R)}$ still denotes the usual supremum norm for functions on $\mathcal{D}_R$. Once again, for simplicity, we shall only write $|\,.\,|_{k}=|\,.\,|_{C^k(\mathcal{D}_R)}$.

\paraga So now we consider a Hamiltonian of the form
\begin{equation}\label{HamC}
\begin{cases} \tag{$\ast\ast$}
H(\theta,I)=h(I)+f(\theta,I), \quad (\theta,I)\in \mathcal{D}_R, \\
|h|_{k}=1, \; |f|_{k} < \varepsilon. 
\end{cases}
\end{equation}
Our main result in the finitely differentiable case is the following.

\begin{theorem}\label{thmC}
Let $H$ be as in (\ref{HamC}), assume that the integrable part $h$ belongs to $DM_{\gamma}^{\tau}(B)$, with $\tau\geq 2$ and $\gamma\leq 1$, and that $k\geq k^*n+1$ for some $k^*\in\N^*$. Let us define
\[ a=b=3^{-1}(2(n+1)\tau)^{-n}.\]
Then there exists a constant $\varepsilon_0>0$, depending on $n, R, k, \gamma$ and $\tau$, such that if $\varepsilon \leq \varepsilon_0$, for every initial action $I(0) \in B_{R/2}$ the following estimates
\[ |I(t)-I(0)| < (n+1)^2\varepsilon^b, \quad |t| \leq \varepsilon^{-k^*a}, \]
hold true.
\end{theorem}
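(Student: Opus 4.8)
\emph{Proof strategy.} The argument follows the same scheme as in \cite{BN09} — Lochak's periodic averaging combined with iterated simultaneous Diophantine approximation — the only new ingredient being that the analytic construction of resonant normal forms has to be replaced by a $C^k$ version. This $C^k$ version is obtained by performing, in finite regularity, the compositions of one-phase averagings introduced in \cite{BN09}, along the lines of the $C^k$ one-phase averaging already carried out in the quasi-convex setting in \cite{Bou10}. The geometric part of \cite{BN09} — the covering of the action space by resonant blocks adapted to $h$, using the Diophantine Morse property of Definition~\ref{sdm} — is insensitive to the regularity of $H$, so it can be imported verbatim; only the size of the normal form remainder, and hence the final stability time, changes. (Note that one cannot hope to deduce Theorem~\ref{thmC} from Theorem~\ref{thmG} by smoothing: smoothing $f$ alone leaves the integrable part non-analytic, while smoothing $h$ is problematic because the condition $h\in DM_\gamma^\tau(B)$ involves the resonance scales $L^{-\tau}$ for all $L$ and is not stable under $C^2$-perturbations uniformly in $L$.)

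\emph{Step 1: $C^k$ normal forms of arbitrary multiplicity.} The analytic heart of the proof is the following statement. Fix $q\in\N^*$ with $qn+1\leq k$ (eventually $q=k^*$). Let $k_1,\dots,k_d\in\Z^n\setminus\{0\}$, with $d\leq n$, be independent, pairwise close periodic vectors spanning a rank-$d$ submodule of $\Z^n$, let $T$ be a common period of the corresponding linear flows, and let $\mathcal{D}\subset\mathcal{D}_R$ be a block of diameter of order $T^{-1}$ around a set where $\nabla h$ lies in the real span of $k_1,\dots,k_d$. Then, provided $\varepsilon$ is small enough in terms of $n,R,k,T$, there is a symplectic embedding $\Phi$, $C^1$-close to the inclusion, with
\[ H\circ\Phi=h+g+f_*,\qquad |g|_{C^0}\lesssim\varepsilon,\qquad |f_*|_{C^0}\lesssim\varepsilon^{q+1}, \]
where $g$ is in resonant normal form (Fourier coefficients supported on the module generated by $k_1,\dots,k_d$) and the implicit constants and powers of $T$ depend only on $n,R,k$. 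The proof proceeds as in \cite{MS02,Bou10}: a single averaging over the flow of $k_i\cdot I$ is the time-one map of the Hamiltonian flow of a generating function obtained by quadrature along that (linear) flow, and since Cauchy estimates on complex strips are not available, all derivatives up to the relevant order are estimated directly, each elementary averaging costing a fixed number of derivatives. One then composes such averagings over $k_1,\dots,k_d$ exactly as in \cite{BN09} to kill the harmonics transverse to the module, and iterates $q$ times; each iteration involves at most $d\leq n$ elementary averagings, so the total loss of derivatives is at most $qn\leq k-1$, which is precisely what $k\geq k^*n+1$ provides when $q=k^*$. The pairwise closeness of the $k_i$, controlled through Dirichlet's theorem in Step 2, is what makes the composed transformation well defined on a block of the stated size with the stated estimates.

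\emph{Step 2: geometric part, and conclusion.} Starting from any $I(0)\in B_{R/2}$, one attaches to the frequency $\nabla h(I(0))$, at the scale fixed by $\varepsilon$, a chain of periodic vectors realizing its successive resonances via iterated simultaneous Diophantine approximation, and covers a neighbourhood of the orbit by resonant blocks on each of which a normal form from Step 1 applies, all the integer parameters ($L$, the periods $T$) being polynomially tied to a single resonance parameter which is a fixed power of $\varepsilon$; this is where the exponent $a=b=3^{-1}(2(n+1)\tau)^{-n}$ comes from, exactly as in \cite{Nie07,BN09}. On each block, the hypothesis $h\in DM_\gamma^\tau(B)$ provides, in the adapted coordinates of Definition~\ref{sdm}, the alternative: either $\Vert\partial_\alpha h_\Lambda\Vert\gtrsim\gamma L^{-\tau}$, in which case the associated fast drift evacuates the orbit from the block before the actions can move by more than $\varepsilon^b$; or the relevant Hessian is quantitatively non-degenerate, which together with conservation of $h$ and of the normal form $g$ confines the actions within $\lesssim\varepsilon^b$. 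Summing the at most $n+1$ contributions along the orbit gives $|I(t)-I(0)|<(n+1)^2\varepsilon^b$, valid as long as the remainder $f_*$ has not acted, i.e. for $|t|$ up to the inverse of $|f_*|_{C^0}$ measured in the rescaled variables of the blocks. With $q=k^*$ and $|f_*|_{C^0}\lesssim\varepsilon^{k^*+1}$, tracking the rescalings (each block having size a power $\sim\varepsilon^{b}$) turns this into $|t|\leq\varepsilon^{-k^*a}$, after shrinking $\varepsilon_0$ once more; the dependence of $\varepsilon_0$ on $n,R,k,\gamma,\tau$ is read off from Steps 1 and 2.

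\emph{Main obstacle.} The only genuinely new difficulty is Step 1: carrying out the composed periodic averagings of \cite{BN09} in the $C^k$ category with an honest, fully quantitative control of the number of lost derivatives and of all constants — i.e. redoing in finite regularity, and in the multi-frequency composed setting, the averaging estimates that in the analytic case follow immediately from Cauchy's inequalities on complex strips. Everything else is either a transcription of the geometric mechanism of \cite{BN09} or straightforward bookkeeping of exponents.
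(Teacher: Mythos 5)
Your overall architecture is indeed the paper's: the only new ingredient is the finite-regularity version of the composed periodic averagings of \cite{BN09} (here Lemma~\ref{lemmelin2}, taken from \cite{Bou10}, composed into Lemma~\ref{lemmelincomp2} and localized in Proposition~\ref{propcomp2}, with a derivative count $k-k^*j\geq 1$ matching your $qn\leq k-1$), and the geometric part is meant to be imported from \cite{BN09}. Two points in your write-up would not survive being made precise, and the second is a genuine gap. The first is a slip in Step~1: after averaging along $d$ independent, pairwise close periodic frequency vectors, the surviving harmonics of $g$ are those $k\in\Z^n$ annihilating all $d$ frequencies --- a module of rank $n-d$ --- not the rank-$d$ module generated by them; in the paper this is the condition $\{g_j,l_i\}=0$ for $i\leq j$ together with Lemma~\ref{resonant}. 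This is fixable.

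The genuine problem is Step~2. You describe the geometric mechanism as a dichotomy on each resonant block: either fast drift evacuates the orbit, or Hessian non-degeneracy ``together with conservation of $h$ and of the normal form $g$ confines the actions.'' That is the quasi-convex mechanism, and it fails for the class $DM_{\gamma}^{\tau}$: a linear Hamiltonian $h(I)=\omega\cdot I$ with Diophantine $\omega$ belongs to this class, and conservation of $h$ (or of any normal form) confines nothing there. The paper's argument (Section~\ref{geo}, following \cite{Nie04} and \cite{BN09}) uses no energy conservation at all; it is a proof by contradiction in which a hypothetical drifting solution is shown to be restrained. Each time the normalized actions, projected onto the current resonant affine subspace, travel a distance $c_j$, the quantitative steepness property extracted from Definition~\ref{sdm} (Lemma~\ref{l1}, where the Hessian alternative serves only to force $\vert\Pi_j\nabla h\vert \SP c_j^2$ at some time, a Morse-type growth estimate) combined with Dirichlet's theorem (Lemma~\ref{l2}) produces a new periodic vector independent of the previous ones; after at most $n$ such steps the normal form is integrable and Proposition~\ref{cor} freezes the actions, contradicting the assumed drift. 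Your proposal as written replaces this induction by an energy-conservation argument that is unavailable, so Step~2 must be redone along the restrained/drifting scheme of Propositions~\ref{etape1}--\ref{etape3}. A smaller quantitative remark: the remainder is not $\varepsilon^{k^*+1}$ but $m^{-k^*}\mu_1$ with the number of averaging steps $m\simeq\varepsilon^{-a}$ forced to be small by the constraint $mT_j\mu_j\PM 1$ with large periods $T_j$; this, rather than a rescaling of block sizes, is where the stability time $\varepsilon^{-k^*a}$ comes from.
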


The above theorem extends the main result of \cite{Bou10}, which was only valid for quasi-convex integrable Hamiltonians. Let us point out that in this result (as in the one contained in \cite{Bou10}) we have decided to consider only the case of integer values of $k$, but the results can also be extended to real values (that is, to Hölder spaces) and this would have given a more precise exponent of stability in terms of the regularity of the system, but we decided not to pursue this further.

\paraga Let us now conclude with some notations that we shall use throughout the text. 

First, we have define norms for Gevrey and $C^k$ functions, but we shall need corresponding norms for vector-valued functions (in particular for diffeomorphisms). Hence given a vector-valued function $F : \mathcal{D}_R \rightarrow \R^{m}$, $m\in\N^*$ and $F=(F_1,\dots,F_{m})$, we say that $F$ is $(\alpha,L)$-Gevrey if $F_i \in G^{\alpha,L}(\mathcal{D}_R)$, for $1\leq i \leq m$, and we will write $|F|_{\alpha,L}=\sum_{i=1}^{m}|F_i|_{\alpha,L}$. Similarly, $F$ is of class $C^k$ if $F_i \in C^k(\mathcal{D}_R)$, for $1\leq i \leq m$, and we will write $|F|_{k}=\sum_{i=1}^{m}|F_i|_{k}$. 

Then, to avoid cumbersome expressions, we will replace constants depending only on $n, R, L, \alpha, \gamma$ and $\tau$ (resp. on $n, R, k, \gamma$ and $\tau$) in the Gevrey case (resp. in the $C^k$ case) with a dot. More precisely, an assertion of the form ``there exists a constant $c\geq1$ depending on the above parameters such that $u < cv$'' will be simply replaced with ``$u\MP v$'', when the context is clear.

\section{Analytical part}\label{ana}

In this part, we shall describe and prove some normal forms that we will need for the proofs of Theorem~\ref{thmG} and Theorem~\ref{thmC}. More precisely, Gevrey Hamiltonians will be considered in section~\ref{ana1} and finitely differentiable Hamiltonians in section~\ref{ana2}, and eventually in section~\ref{ana3} we will explain the dynamical consequences of these normal forms.    

But first we need to recall the following basic definition, which will be crucial to us.

\begin{definition}
A vector $\omega \in \R^n\setminus\{0\}$ is said to be periodic if there exists a real number $t>0$ such that $t\omega \in \Z^n$. In this case, the number 
\[ T=\inf\{t>0 \; | \; t\omega \in \Z^n\}\] 
is called the period of $\omega$.
\end{definition}

The easiest example is given by a vector with rational components, the period of which is just the least common multiple of the denominators of its components. Geometrically, if $\omega$ is $T$-periodic, an invariant torus with a linear flow with vector $\omega$ is filled with $T$-periodic orbits.

\subsection{The Gevrey case}\label{ana1}

As in \cite{MS02}, we shall start with perturbations of linear integrable Hamiltonians, for which we will obtain a global normal form (Lemma~\ref{lemmelincomp} below), and then the latter will be used to obtain local normal forms for perturbations of general Hamiltonians (Proposition~\ref{propcomp} below).

\paraga Let $\omega_1\in\R^n\setminus\{0\}$ be a $T_1$-periodic vector, and let $l_1(I)=\omega_1.I$ be the linear integrable Hamiltonian with frequency $\omega_1$. In the following, we shall consider  a ``large" positive integer $m\in\N^*$ and a ``small" parameter $\mu_1>0$, which will eventually depend on $\varepsilon$. We shall also use a real number $\rho_1>0$ independent of $\varepsilon$, to be fixed below. The following result is due to Marco and Sauzin (\cite{MS02}).

\begin{lemma}[Marco-Sauzin]\label{lemmelin}
Consider the Hamiltonian $H=l_1+f$ defined on $\mathcal{D}_{3\rho_1}$, with $f\in G^{\alpha,L}(\mathcal{D}_{3\rho_1})$ and $|f|_{\alpha,L}<\mu_1$. Assume that
\begin{equation}\label{hyplin}
T_1\mu_1 \PM 1, \quad mT_1\mu_1 \PM 1.
\end{equation}
Then there exist $L_1=CL$, for some constant $0<C<1$, and an $(\alpha,L_1)$-Gevrey symplectic transformation
\[ \Phi^1 : \mathcal{D}_{2\rho_1} \rightarrow \mathcal{D}_{3\rho_1} \]
with $|\Phi^1-\mathrm{Id}|_{\alpha,L_1} \MP T_1\mu_1$ such that
\[ H^1=H\circ\Phi^1=l_1+g^1+f^1 \]
with $\{g^1,l_1\}=0$ and the estimates
\[ |g^1|_{\alpha,L_1} \MP \mu_1, \quad |f^1|_{\alpha,L_1} \MP e^{-m^{1/\alpha}}\mu_1,  \]
hold true.
\end{lemma}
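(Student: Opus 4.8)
The statement is the classical periodic-averaging / one-phase normal form, and the plan is to run an iterative (Kolmogorov-type) scheme of successive coordinate changes, each one peeling off the non-resonant part of the current perturbation, and to estimate everything in the Gevrey norm $|\,.\,|_{\alpha,L}$. Let me sketch the steps.

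\emph{Step 1: the cohomological equation along a periodic flow.} Since $\omega_1$ is $T_1$-periodic, the Hamiltonian flow $\Psi^t$ of $l_1(I)=\omega_1.I$ is $T_1$-periodic on $\T^n$ (it translates $\theta$ by $t\omega_1$ and fixes $I$). Given a function $u$, define its average $[u]$ and the solution $v$ of $\{l_1,v\}=u-[u]$ by the usual explicit formulas
\[
[u](\theta,I)=\frac{1}{T_1}\int_0^{T_1} u(\theta+t\omega_1,I)\,dt,\qquad
v(\theta,I)=\frac{1}{T_1}\int_0^{T_1} t\,\bigl(u(\theta+t\omega_1,I)-[u](\theta,I)\bigr)\,dt .
\]
One checks that $[u]$ Poisson-commutes with $l_1$. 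The key quantitative point, and the heart of the ``non-analytic'' difficulty mentioned in the introduction, is that these operators are bounded on $G^{\alpha,L}$: because $\Psi^t$ is an affine translation in the angle, composition with it is an isometry of the Gevrey norm, and integration in $t$ over $[0,T_1]$ costs at most a factor $T_1$ (for $v$, a factor $T_1^2$ after the weight $t$). So $|[u]|_{\alpha,L}\leq |u|_{\alpha,L}$ and $|v|_{\alpha,L}\lesssim T_1 |u|_{\alpha,L}$, with no loss of regularity width. This is what replaces the Cauchy-estimate argument available in the analytic case.

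\emph{Step 2: one step of the scheme.} Suppose at some stage we have $H=l_1+g+f$ with $\{g,l_1\}=0$, $|g|_{\alpha,L'}\lesssim \mu_1$, $|f|_{\alpha,L'}\leq \delta$ small. Solve $\{l_1,\chi\}=[f]-f$, so $|\chi|_{\alpha,L'}\lesssim T_1\delta$, and let $\Phi=X_\chi^1$ be the time-one map of the Hamiltonian flow of $\chi$. Using the Banach-algebra property of $G^{\alpha,L}$ together with standard estimates on Hamiltonian flows (quantitative versions of the Lie-series / flow bounds, e.g. $|u\circ X_\chi^1-u|_{\alpha,L''}\lesssim |\{u,\chi\}|_{\alpha,L'}$ on a slightly shrunk width, valid as long as $|\chi|$ is small compared to the width-shrinking), one gets
\[
H\circ\Phi=l_1+g+[f]+f_+,\qquad |f_+|_{\alpha,L''}\lesssim T_1\,\mu_1\,\delta + T_1\,\delta^2 .
\]
Absorbing $[f]$ into the new resonant term $g_+=g+[f]$ (still commuting with $l_1$, with $|g_+|\lesssim\mu_1$), the perturbation size is multiplied by roughly $T_1\mu_1$ at each step. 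The hypothesis $T_1\mu_1\PM 1$ makes this a contraction; after $j$ steps $|f_j|\lesssim (CT_1\mu_1)^{j}\mu_1$ on a width that has shrunk geometrically but stays above $L_1=CL$ provided we only run $m$ steps and the width loss at step $i$ is taken proportional to $L/m$ (or a summable sequence).

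\emph{Step 3: optimising the number of steps.} Run the scheme exactly $\sim m$ times. The composed transformation $\Phi^1=\Phi_1\circ\cdots\circ\Phi_m$ satisfies $|\Phi^1-\mathrm{Id}|_{\alpha,L_1}\lesssim \sum_i |\chi_i| \lesssim T_1\mu_1$ (geometric series dominated by the first term). The remainder is
\[
|f^1|_{\alpha,L_1}\lesssim (CT_1\mu_1)^{m}\mu_1 .
\]
To convert the geometric decay $(CT_1\mu_1)^m$ into the stated $e^{-m^{1/\alpha}}$, one uses the second hypothesis $mT_1\mu_1\PM 1$: here is where the Gevrey index enters. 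In the Gevrey setting the width lost at step $i$ cannot be taken uniformly $\sim L/m$; the derivative bookkeeping forces a loss at step $i$ of order $L\, i^{-\alpha}$ (so that the total $L\sum i^{-\alpha}$ converges only because... actually $\sum i^{-\alpha}$ diverges for $\alpha=1$, so more care: one takes the width at step $i$ to decrease like $L(1-\sum_{j\le i} c_j)$ with $c_j$ chosen so the per-step gain absorbs a factor $(m/i)^{\alpha}$ type term), and the cost of each homological step picks up a factor that, summed, produces the $\exp(-m^{1/\alpha})$ rather than a plain geometric rate. Concretely, after $j$ steps one proves by induction $|f_j|_{\alpha,L_j}\lesssim \mu_1\exp(-\sum_{i=1}^{j} i^{1/\alpha-1})$ up to constants, and $\sum_{i=1}^m i^{1/\alpha-1}\sim \alpha\, m^{1/\alpha}$; this is the mechanism by which ``Gevrey-$\alpha$ smoothing over $m$ steps'' yields an $e^{-m^{1/\alpha}}$ tail instead of an $e^{-m}$ tail.

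\emph{Main obstacle.} The routine part is the iteration and the algebra/flow estimates; the delicate part is Step 3 — choosing the sequence of width losses $L=L_0>L_1>\cdots>L_m=CL$ and tracking how the Gevrey weights $L^{|l|\alpha}(l!)^{-\alpha}$ behave under this shrinking so that (i) the total loss is bounded (width stays $\geq CL$) and (ii) the accumulated constants produce exactly the subexponential factor $e^{-m^{1/\alpha}}$. This is precisely the technical core of \cite{MS02}: the one-phase averaging lemma in Gevrey classes, where one must work directly with all derivatives and control their factorial growth, rather than passing through $C^0$-norms on complex strips as in the analytic case. I would simply invoke the construction of \cite{MS02} for these estimates, or reproduce it with the weight sequence $L_i = L\bigl(1 - \tfrac12 \sum_{j\le i} 2^{-j}\bigr)$-type choice adapted so that the per-step Gevrey smoothing gain is $\exp(-c\, i^{1/\alpha - 1})$.
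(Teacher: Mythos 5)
The paper does not prove Lemma~\ref{lemmelin} at all: the statement is quoted verbatim from Proposition 3.2 of \cite{MS02}, the only remark being that the hypothesis $mT_1\mu_1\PE 1$ there relaxes trivially to $mT_1\mu_1\PM 1$. So your sketch can only be compared with the construction of \cite{MS02}, which you too ultimately invoke. Your Steps 1 and 2 do capture that construction correctly: the cohomological equation is solved by the explicit integrals along the $T_1$-periodic flow of $l_1$, these operators are bounded on $G^{\alpha,L}$ with a factor $T_1$ and no loss of width because composition with an angle translation is an isometry of the Gevrey norm, and the scheme iterates conjugations by time-one flows estimated through the Banach-algebra property.

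Step 3, however, is not coherent as written. If each of $m$ steps genuinely contracted the remainder by $CT_1\mu_1$ on a width staying above $L_1$, then $T_1\mu_1\PM 1$ alone would give $(CT_1\mu_1)^m\le 2^{-m}=e^{-cm}$, and neither the hypothesis $mT_1\mu_1\PM 1$ nor the exponent $1/\alpha$ would ever enter; your subsequent attempt to ``degrade'' this to $e^{-m^{1/\alpha}}$ by postulating a per-step gain $\exp(-c\,i^{1/\alpha-1})$ is reverse-engineered from the answer rather than derived. The actual mechanism is that each step takes derivatives of the current remainder, and in the Gevrey-$\alpha$ scale a derivative costs a factor of order $\delta^{-\alpha}$ for a width loss $\delta$ (this is the content of Lemma~\ref{cauchy}); with $N$ steps of \emph{equal} loss $c/N$ the per-step contraction factor is of order $N^{\alpha}T_1\mu_1$, so only $N\lesssim(T_1\mu_1)^{-1/\alpha}$ steps are affordable. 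The hypothesis $mT_1\mu_1\PM 1$ guarantees $m^{1/\alpha}\lesssim(T_1\mu_1)^{-1/\alpha}$, so one runs $N\sim m^{1/\alpha}$ steps --- not $m$ --- each contracting by $1/2$, and obtains $2^{-N}\sim e^{-cm^{1/\alpha}}$. Your proposed width sequence with geometrically decreasing losses $2^{-j}$ is the wrong choice for the same reason: the late steps would cost a factor $2^{j\alpha}T_1\mu_1$ and the product would diverge. None of this affects the paper, which legitimately treats the lemma as a black box from \cite{MS02}; but if you intend to reproduce the proof you need the ``few, equally expensive steps'' bookkeeping above in place of the one you describe.
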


One can choose the constant $C=16^{-1}(2n)^{\frac{1-\alpha}{\alpha}}$. The statement above is exactly Proposition 3.2 in \cite{MS02}, where the authors state their result for $mT_1\mu_1\PE 1$, but it also holds trivially for smaller $m$, that is $mT_1\mu_1\PM 1$. The use of this artificial parameter $m\in\N^*$ will make subsequent arguments easier. 

It is perhaps useful to understand this lemma in the very special case where $\omega_1=e_1$ is the first vector of the canonical basis of $\R^n$: the equality $\{g^1,l_1\}=0$ simply means that $g^1$ is independent of the first angle $\theta_1$, and therefore the evolution of the first action component $I_1$ is only governed by the remainder $f^1$. 

\paraga Now we are going to make suitable ``compositions" of the above lemma, but first we shall explain heuristically what we are planning to do formally in the sequel. 

So we consider another periodic vector $\omega_2\in\R^n\setminus\{0\}$, with period $T_2>0$, which is independent of $\omega_1$, and we let $l_2(I)=\omega_2.I$. If the suitable hypotheses are met, by Lemma~\ref{lemmelin} we can transform $H=l_2+f$, where the size of $f$ is of order $\mu_2$, into 
\[ H^2=H\circ \Phi^2=l_2+g^2+f^2, \quad \{g^2,l_2\}=0,\] 
with $g^2$ of order $\mu_2$ and $f^2$ of order $e^{-m}\mu_2$. Now if $\omega_2$ is close enough to $\omega_1$, that is $|\omega_2-\omega_1|<\mu_1$, and if $\mu_2<\mu_1$, we can write \[ H^2=(l_2+l_1-l_1)+g^2+f^2=l_1+(l_2-l_1+g^2)+f^2=l_1+\tilde{f}+f^2, \] 
where $\tilde{f}=l_2-l_1+g^2$ satisfies $\{\tilde{f},l_2\}$ and its size is of order $\mu_1$. For a moment, let us forget about $f^2$, which is already exponentially small with respect to $m$, and consider $l_1+\tilde{f}$ as a perturbation of $l_1$. Under the suitable assumptions, we can apply once again Lemma~\ref{lemmelin} and find a transformation $\Phi^1$ that sends $l_1+\tilde{f}$ into $l_1+g^1+f^1$, where $f^1$ is exponentially small with respect to $m$ and $\{g^1,l_1\}=0$.

Now the key point is the following: as $\tilde{f}$ satisfies $\{\tilde{f},l_2\}=0$, $g^1$ and $f^1$ also satisfy $\{g^1,l_2\}=\{f^1,l_2\}=0$, hence $\{g^1,l_1\}=\{g^1,l_2\}=0$. Indeed, it is enough to show that, denoting $\left(\Phi_{s}^{l_1}\right)_{s\in\R}$ the Hamiltonian flow of $l_1$, if $\{\tilde{f},l_2\}=0$ then   
\[ [\tilde{f}]_1=\frac{1}{T_1}\int_{0}^{T_1}\tilde{f}\circ\Phi_{s}^{l_1}ds \]
and 
\[ \chi_1=\frac{1}{T_1}\int_{0}^{T_1}(\tilde{f}-[\tilde{f}]_1)\circ\Phi_{s}^{l_1}sds \]
also satisfy $\{[\tilde{f}]_1,l_2\}=0$ and $\{\chi_1,l_2\}=0$. This can be easily proved by direct computations, but this is a general fact in normal form theory (sometimes known as a ``normal form with symmetry") and a nicer way to see this goes as follows. Since $\{l_1,l_2\}=0$, the linear operators $L_{l_1}=\{.,l_1\}$ and $L_{l_2}=\{.,l_2\}$ commutes, so that the kernel of $L_{l_2}$ is invariant by $L_{l_1}$, and as $L_{l_1}$ is semi-simple, the kernel of $L_{l_2}$ is also invariant under the projection onto the kernel of $L_{l_1}$ which is given by the map $[\,.\,]_1$. This explains why $\{[\tilde{f}]_1,l_2\}=0$. Now $\tilde{f}-[\tilde{f}]_1$ is in the kernel of $L_{l_2}$, and its unique pre-image by $L_{l_1}$ is given by $\chi_1$, hence $\{\chi_1,l_2\}=0$. Put it differently, if a Hamiltonian (above $l_1+\tilde{f}$) has an integral (in our case, $l_2$), then the integral is invariant under the normalizing transformation (in our case, $l_2 \circ \Phi_1=l_2$) and $l_2$ remains an integral of the normalized Hamiltonian (that is $\{l_1+g^1+f^1,l_2\}$).  

To conclude, taking into account $f^2$, the map $\Phi^1$ sends $f^2$ to $f^2\circ\Phi^1$ which remains exponentially small with respect to $m$, and so is $f_2=f^1+f^2\circ\Phi^1$. Therefore   
\[ H^2\circ\Phi^1=l_1+g^1+f_2, \quad \{g^1,l_1\}=\{g^1,l_2\}=0, \]
then setting $g_2=l_1-l_2+g^1$, we can write again
\[ H^2\circ\Phi^1=l_2+g_2+f_2, \quad \{g_2,l_1\}=\{g_2,l_2\}=0. \]
Finally, we have found $\Phi_2=\Phi^2\circ\Phi^1$ such that
\[ H_2=H\circ\Phi_2=H^2\circ\Phi^1=H\circ\Phi^2\circ\Phi^1=l_2+g_2+f_2 \]
with $\{g_2,l_1\}=\{g_2,l_2\}=0$ and $f_2$ exponentially small.

\paraga Now let us make our previous discussion rigorous.

For $i\in\{1,\dots,n\}$, let $\omega_i\in\R^n\setminus\{0\}$ be independent $T_i$-periodic vectors, and we denote by $l_i$ the linear integrable Hamiltonian of frequency $\omega_i$. We consider a positive integer $m\in\N^*$ and a sequence of small parameters $\mu_i>0$, for $i\in\{1,\dots,n\}$. As before, $m\in\N^*$ and $\mu_i>0$, for $i\in\{1,\dots,n\}$, will eventually depend on $\varepsilon$. Now to fix the ideas, we define the increasing sequence
\[ \rho_i=2^{i}, \quad i\in\{1,\dots,n\}. \]

We shall need some assumptions on these parameters, so we define the condition $(A_i)$ for $i\in\{1,\dots,n\}$ by
\begin{equation}\label{A1}
T_1\mu_1 \PM 1, \quad mT_1\mu_1 \PM 1 \tag{$A_1$}
\end{equation} 
and for $i\in\{2,\dots,n\}$,
\begin{equation}\label{Ai}
T_i\mu_i \PM 1, \; mT_i\mu_i \PM 1, \;|\omega_i-\omega_{i-1}|\MP\mu_{i-1}, \; \mu_i\PM\mu_{i-1}. \tag{$A_i$}
\end{equation}

Recalling the constant $C>0$ that appeared in Lemma~\ref{lemmelin}, we shall also define the decreasing sequence $L_i=C^iL$, for $i\in\{0,\dots,n\}$. In the above lemma, we shall use Lemma~\ref{estimcomp} of appendix~\ref{tech}.

\begin{lemma}\label{lemmelincomp}
Let $j\in\{1,\dots,n\}$, and consider the Hamiltonian $H=l_j+f$ defined on $\mathcal{D}_{3\rho_j}$, with $f\in G^{\alpha,L}(\mathcal{D}_{3\rho_j})$ and $|f|_{\alpha,L}\MP\mu_j$. Assume that $(A_i)$ is satisfied for $i\in\{1,\dots,j\}$.
Then there exists a symplectic transformation
\[ \Phi_j=\Phi^j \circ \cdots \circ \Phi^1 : \mathcal{D}_{2\rho_1} \rightarrow \mathcal{D}_{3\rho_j} \]
where $\Phi^i : \mathcal{D}_{2\rho_i} \rightarrow \mathcal{D}_{3\rho_i}$ is $(\alpha,L_{j-i+1})$-Gevrey and $|\Phi^i-\mathrm{Id}|_{\alpha,L_{j-i+1}} \MP T_i\mu_i$ for $i\in\{1,\dots,j\}$, such that
\[ H_j=H\circ\Phi_j=l_j+g_j+f_j \in G^{\alpha,L_j}(\mathcal{D}_{2\rho_1}) \]
with $\{g_j,l_i\}=0$, for $i\in\{1,\dots,j\}$, and the estimates
\[ |g_j|_{\alpha,L_j} \MP \mu_1, \quad |f_j|_{\alpha,L_j} \MP e^{-m^{1/\alpha}}\mu_1,  \]
hold true.
\end{lemma}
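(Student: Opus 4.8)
The plan is to prove Lemma~\ref{lemmelincomp} by induction on $j\in\{1,\dots,n\}$, following exactly the heuristic description given in the preceding paragraphs. The base case $j=1$ is precisely Lemma~\ref{lemmelin} (Marco-Sauzin): since $(A_1)$ holds, we obtain $\Phi^1=\Phi_1:\mathcal{D}_{2\rho_1}\to\mathcal{D}_{3\rho_1}$, which is $(\alpha,L_1)$-Gevrey with $|\Phi^1-\mathrm{Id}|_{\alpha,L_1}\MP T_1\mu_1$, such that $H_1=H\circ\Phi_1=l_1+g_1+f_1$ with $\{g_1,l_1\}=0$, $|g_1|_{\alpha,L_1}\MP\mu_1$ and $|f_1|_{\alpha,L_1}\MP e^{-m^{1/\alpha}}\mu_1$; here $L_{j-i+1}=L_1=CL$, as required.

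For the inductive step, assume the statement holds for $j-1$ and let $H=l_j+f$ on $\mathcal{D}_{3\rho_j}$ with $|f|_{\alpha,L}\MP\mu_j$, with $(A_i)$ satisfied for $i\le j$. First I would apply Lemma~\ref{lemmelin} directly to $H=l_j+f$ on $\mathcal{D}_{3\rho_j}$: since $T_j\mu_j\PM 1$ and $mT_j\mu_j\PM 1$ from $(A_j)$, this yields an $(\alpha,L_1)$-Gevrey transformation $\Phi^j:\mathcal{D}_{2\rho_j}\to\mathcal{D}_{3\rho_j}$ with $|\Phi^j-\mathrm{Id}|_{\alpha,L_1}\MP T_j\mu_j$ and $H^j=H\circ\Phi^j=l_j+g^j+f^j$, $\{g^j,l_j\}=0$, $|g^j|_{\alpha,L_1}\MP\mu_j$, $|f^j|_{\alpha,L_1}\MP e^{-m^{1/\alpha}}\mu_j$. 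Now I rewrite $H^j=l_{j-1}+\tilde f+f^j$ where $\tilde f=l_j-l_{j-1}+g^j$. Using $|\omega_j-\omega_{j-1}|\MP\mu_{j-1}$ and $\mu_j\MP\mu_{j-1}$ (both from $(A_j)$) one gets $|\tilde f|_{\alpha,L_1}\MP\mu_{j-1}$ (noting $|l_j-l_{j-1}|_{\alpha,L}\MP|\omega_j-\omega_{j-1}|$ since $\rho_{j-1}$ is bounded, though one must be mildly careful that the domain $\mathcal{D}_{2\rho_j}$ on which $\Phi^j$ is defined contains $\mathcal{D}_{3\rho_{j-1}}$, which holds because $2\rho_j=2^{j+1}\ge 3\cdot 2^{j-1}=3\rho_{j-1}$). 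Moreover $\{l_j,l_{j-1}\}=0$ and $\{g^j,l_j\}=0$ give $\{\tilde f,l_j\}=0$. Next, apply the induction hypothesis to $l_{j-1}+\tilde f$ on $\mathcal{D}_{3\rho_{j-1}}$ (with perturbation size $\MP\mu_{j-1}$, and $(A_i)$ for $i\le j-1$ holding): this produces $\Phi_{j-1}=\Phi^{j-1}\circ\cdots\circ\Phi^1:\mathcal{D}_{2\rho_1}\to\mathcal{D}_{3\rho_{j-1}}$, with each $\Phi^i$ being $(\alpha,L_{j-1-i+1})$-Gevrey, such that $(l_{j-1}+\tilde f)\circ\Phi_{j-1}=l_{j-1}+g_{j-1}+f_{j-1}$ with $\{g_{j-1},l_i\}=0$ for $i\le j-1$, $|g_{j-1}|\MP\mu_1$, $|f_{j-1}|\MP e^{-m^{1/\alpha}}\mu_1$.

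The crucial point, which I would justify exactly as in the ``normal form with symmetry'' discussion above, is that since $\{\tilde f,l_j\}=0$ and the normalizing procedure only uses averaging along the flows of $l_1,\dots,l_{j-1}$ (all of which commute with $l_j$), the integral $l_j$ is preserved: $l_j\circ\Phi_{j-1}=l_j$ and $\{g_{j-1},l_j\}=0$, $\{f_{j-1},l_j\}=0$. Therefore, setting $\Phi_j=\Phi^j\circ\Phi_{j-1}:\mathcal{D}_{2\rho_1}\to\mathcal{D}_{3\rho_j}$, we get $H_j=H\circ\Phi_j=l_{j-1}+g_{j-1}+f_{j-1}+f^j\circ\Phi_{j-1}$; putting $g_j=l_j-l_{j-1}+g_{j-1}$ and $f_j=f_{j-1}+f^j\circ\Phi_{j-1}$ yields $H_j=l_j+g_j+f_j$ with $\{g_j,l_i\}=0$ for all $i\le j$ (for $i\le j-1$ from the above and because $l_j-l_{j-1}$ Poisson-commutes with every $l_i$; for $i=j$ likewise). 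The estimates follow: $|g_j|\MP\mu_1$ since $|l_j-l_{j-1}|\MP\mu_{j-1}\MP\mu_1$ and $|g_{j-1}|\MP\mu_1$; and $|f_j|\MP e^{-m^{1/\alpha}}\mu_1$ since $|f_{j-1}|\MP e^{-m^{1/\alpha}}\mu_1$ and $|f^j\circ\Phi_{j-1}|\MP|f^j|\MP e^{-m^{1/\alpha}}\mu_j\MP e^{-m^{1/\alpha}}\mu_1$ — here the composition estimate is controlled by Lemma~\ref{estimcomp} of the appendix, using $|\Phi_{j-1}-\mathrm{Id}|$ being small. Finally, the claim that $\Phi^i$ is $(\alpha,L_{j-i+1})$-Gevrey: $\Phi^j$ comes out $(\alpha,L_1)$-Gevrey from Lemma~\ref{lemmelin}, matching $L_{j-j+1}=L_1$; and the $\Phi^i$ for $i<j$ were $(\alpha,L_{j-1-i+1})$-Gevrey from the induction hypothesis, hence also $(\alpha,L_{j-i+1})$-Gevrey since $L_{j-i+1}<L_{j-i}=L_{(j-1)-i+1}$ and Gevrey norms are monotone in the width parameter.

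The main obstacle I anticipate is the bookkeeping of domains and width parameters: one must check at each stage that the shrinking sequence of widths $L_0>L_1>\dots>L_n$ and of radii $\rho_1<\dots<\rho_n$ fit together so that every composition $\Phi^j\circ\Phi_{j-1}$ is well-defined (range of $\Phi_{j-1}$ lands in the domain of definition needed, i.e. $\mathcal{D}_{3\rho_{j-1}}\subset\mathcal{D}_{2\rho_j}$) and that the Gevrey composition lemma can be applied with the loss of width already absorbed. The dynamical/algebraic content — that periodic averaging preserves commuting integrals — is conceptually clean and was already explained heuristically; turning it into the precise invariance statement $l_j\circ\Phi_{j-1}=l_j$ requires only that each elementary step $\Phi^i$ ($i<j$) of the normalization of $l_{j-1}+\tilde f$ is built from the flow of $l_i$ and functions commuting with $l_j$, which propagates by the semisimplicity argument given in the text. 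Everything else is routine propagation of the $\MP$-estimates through finitely many ($\le n$) steps, with all implied constants depending only on $n,R,L,\alpha,\gamma,\tau$.
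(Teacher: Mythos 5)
Your proposal is correct and follows essentially the same inductive route as the paper: one periodic averaging with respect to $\omega_j$ via Lemma~\ref{lemmelin}, rewriting the result as a perturbation of $l_{j-1}$ of size $\MP\mu_{j-1}$, invoking the induction hypothesis together with the symmetry argument $\{\tilde f,l_j\}=0\Rightarrow\{g_{j-1},l_j\}=0$, and controlling $f^j\circ\Phi_{j-1}$ by a repeated use of Lemma~\ref{estimcomp}. The only slips are cosmetic: the paper sets $g_j=l_{j-1}-l_j+g_{j-1}$ (your sign is reversed, though the estimate is unaffected), and the width indices $L_{j-i+1}$ come out directly because the induction hypothesis must be applied with $L$ replaced by $L_1$ (since $\tilde f$ only lies in $G^{\alpha,L_1}$, not $G^{\alpha,L}$), rather than by the monotonicity argument you give.
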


\begin{proof}
The proof goes by induction. For $j=1$, this is nothing but Lemma~\ref{lemmelin} with $g_1=g^1$ and $f_1=f^1$. Now assume that the result holds true for some $j-1\in\{1,\dots,n-1\}$, and let us show that it remains true for $j\in\{2,\dots,n\}$.

By assumption, $(A_j)$ is satisfied, in particular
\[ T_j\mu_j \PM 1, \quad m T_j\mu_j \PM 1 \]
hence condition~(\ref{hyplin}) of Lemma~\ref{lemmelin} holds true. Therefore, there exists an $(\alpha,L_1)$-Gevrey symplectic transformation
\[ \Phi^j : \mathcal{D}_{2\rho_j} \rightarrow \mathcal{D}_{3\rho_j} \]
with $|\Phi^j-\mathrm{Id}|_{\alpha,L_1} \MP T_j\mu_j$ such that
\[ H^j=H\circ\Phi^j=l_j+g^j+f^j \]
with $\{g^j,l_j\}=0$ and the estimates
\[ |g^j|_{\alpha,L_1} \MP \mu_j, \quad |f^j|_{\alpha,L_1} \MP e^{-m^{1/\alpha}}\mu_j,  \]
hold true.

Now let us introduce $\tilde{f}=l_j-l_{j-1}+g^j$. Obviously, we have $\{\tilde{f},l_j\}=0$. Moreover, by assumption $(A_j)$ we have $|\omega_j-\omega_{j-1}|\MP\mu_{j-1}$ and $\mu_j\PM\mu_{j-1}$ so that 
\begin{eqnarray*}
|\tilde{f}|_{\alpha,L_1} & \leq & |l_j-l_{j-1}|_{\alpha,L_1} + |g^j|_{\alpha,L_1} \\
& \MP & |\omega_j-\omega_{j-1}| + \mu_j \\
& \MP & \mu_{j-1}.
\end{eqnarray*}  
Then we can write
\[ H^j=l_{j-1}+\tilde{f}+f^j. \]
Furthermore, as $\mathcal{D}_{3\rho_{j-1}}\subseteq\mathcal{D}_{2\rho_j}$, the Hamiltonian $l_{j-1}+\tilde{f}$ is well-defined on $\mathcal{D}_{3\rho_{j-1}}$, we have $\tilde{f}\in G^{\alpha,L_1}(\mathcal{D}_{3\rho_{j-1}})$ with $|\tilde{f}|_{\alpha,L_1}\MP\mu_{j-1}$. 

Now recall that $(A_i)$ holds true for $i\in\{1,\dots,j-1\}$, hence we can eventually apply our hypothesis of induction to the Hamiltonian $l_{j-1}+\tilde{f}$: there exists a symplectic transformation
\[ \Phi_{j-1}=\Phi^{j-1} \circ \cdots \circ \Phi^1 : \mathcal{D}_{2\rho_{1}} \rightarrow \mathcal{D}_{3\rho_{j-1}} \]
where $\Phi^i : \mathcal{D}_{2\rho_i} \rightarrow \mathcal{D}_{3\rho_i}$ is $(\alpha,L_{j-i+1})$-Gevrey and $|\Phi^i-\mathrm{Id}|_{\alpha,L_{j-i+1}} \MP T_i\mu_i$ for $i\in\{1,\dots,j-1\}$, such that
\[ (l_{j-1}+\tilde{f})\circ\Phi_{j-1}=l_{j-1}+g_{j-1}+f_{j-1} \in G^{\alpha,L_{j-1}}(\mathcal{D}_{2\rho_1}) \]
with $\{g_{j-1},l_i\}=0$, for $i\in\{1,\dots,j-1\}$, and the estimates
\[ |g_{j-1}|_{\alpha,L_{j-1}} \MP \mu_1, \quad |f_{j-1}|_{\alpha,L_{j-1}} \MP e^{-m^{1/\alpha}}\mu_1,  \]
hold true. Moreover, as $\{\tilde{f},l_j\}=0$, we also have $\{g_{j-1},l_j\}=0$ and therefore        $\{g_{j-1},l_i\}=0$, for $i\in\{1,\dots,j\}$. 

Then we set $\Phi_j=\Phi^j\circ\Phi_{j-1}$ so that 
\[ \Phi_j=\Phi^j\circ \cdots \circ \Phi^1 : \mathcal{D}_{2\rho_{1}} \rightarrow \mathcal{D}_{3\rho_{j}}. \]
Now
\begin{eqnarray*}
H_j & = & H\circ\Phi_j\\
& = & H^j\circ\Phi_{j-1} \\
& = & (l_{j-1}+\tilde{f})\circ \Phi_{j-1} + f^j\circ \Phi_{j-1}\\
& = & l_{j-1}+g_{j-1}+f_{j-1}+f^j\circ \Phi_{j-1}.
\end{eqnarray*}
We will prove below that $f^j\circ \Phi_{j-1} \in G^{\alpha,L_j}(\mathcal{D}_{2\rho_1})$, and since we know that the function $l_{j-1}+g_{j-1}+f_{j-1} \in G^{\alpha,L_{j-1}}(\mathcal{D}_{2\rho_1})$, this will easily implies that $H_j \in G^{\alpha,L_j}(\mathcal{D}_{2\rho_1})$. Now let us define $g_j=l_{j-1}-l_j+g_{j-1}$ and $f_j=f_{j-1}+f^j\circ \Phi_{j-1}$ so that we can eventually write
\[ H_j=l_j+g_j+f_j. \]
Since $\{g_{j-1},l_i\}=0$ then obviously $\{g_j,l_i\}=0$, for $i\in\{1,\dots,j\}$. Therefore it remains to prove the estimates. First, we have
\begin{eqnarray*}
|g_j|_{\alpha,L_j} & \leq & |l_{j-1}-l_j|_{\alpha,L_j} + |g_{j-1}|_{\alpha,L_{j}} \\
& \leq & |l_{j-1}-l_j|_{\alpha,L_j} + |g_{j-1}|_{\alpha,L_{j-1}} \\
& \MP & |\omega_{j-1}-\omega_j| + \mu_1 \\
& \MP & \mu_{j-1}+ \mu_1 \\
& \MP & \mu_1.
\end{eqnarray*}  
Then, we know that $|f_{j-1}|_{\alpha,L_{j-1}} \MP e^{-m^{1/\alpha}}\mu_1$, so we only need to estimate $|f^j\circ \Phi_{j-1}|_{\alpha,L_j}$. For that, recall that
\[ \Phi_{j-1}=\Phi^{j-1} \circ \cdots \circ \Phi^{1} \]
and for $i\in\{1,\dots,j-1\}$, we have the estimates
\[ |\Phi_i-\mathrm{Id}|_{\alpha,L_{j-i+1}} \MP T_i\mu_i \MP 1. \]
Therefore a repeated use of lemma~\ref{estimcomp} yields 
\begin{eqnarray*}
|f^j\circ \Phi_{j-1}|_{\alpha,L_j} & = & |f^j \circ \Phi^{j-1} \circ \cdots \circ \Phi^{1}|_{\alpha,L_{j}} \\ 
& \leq & |f^j \circ \Phi^{j-1} \circ \cdots \circ \Phi^{2}|_{\alpha,L_{j-1}} \\
& \leq & |f^j \circ \Phi^{j-1} \circ \cdots \circ \Phi^{3}|_{\alpha,L_{j-2}} \\
& \dots & \\
& \leq & |f^j|_{\alpha,L_1} \\
& \MP & e^{-m}\mu_j. 
\end{eqnarray*}
Hence
\begin{eqnarray*}
|f_j|_{\alpha,L_j} & \leq & |f_{j-1}|_{\alpha,L_{j}}+ |f^j\circ \Phi_{j-1}|_{\alpha,L_j}\\
& \leq & |f_{j-1}|_{\alpha,L_{j-1}}+ |f^j\circ \Phi_{j-1}|_{\alpha,L_j}\\
& \MP & e^{-m^{1/\alpha}}(\mu_1+\mu_j) \\
& \MP & e^{-m^{1/\alpha}}\mu_1, 
\end{eqnarray*}
which is the required estimate.
\end{proof}

Here also, it is perhaps useful to understand this lemma in the special case where $(\omega_1, \dots,\omega_n)$ is the canonical basis of $\R^n$: the equality $\{g_j,l_i\}=0$ for $i\in\{1,\dots,j\}$ means that $g_j$ is independent of the first $j$ angles $\theta_1, \dots, \theta_j$, and therefore the evolution of the first $j$ action components $I_1,\dots,I_j$ is only governed by the remainder $f^j$. In any cases, since we are assuming that $(\omega_1, \dots,\omega_n)$ are linearly independent, then for $j=n$, $g_n$ is integrable and the action variables can only evolve according to $f^n$.  

\paraga Now we shall come back to our original setting~(\ref{HamG}), that is 
\begin{equation*}
\begin{cases} 
H(\theta,I)=h(I)+f(\theta,I), \quad (\theta,I)\in \mathcal{D}_R, \\
|h|_{\alpha,L}=1, \; |f|_{\alpha,L} < \varepsilon. 
\end{cases}
\end{equation*}
For $i\in\{1,\dots,n\}$, we still consider a sequence of $T_i$-periodic vectors $\omega_i$, a sequence of small parameters $\mu_i$ and an integer $m\in\N^*$.

Let us fix $i\in\{1,\dots,n\}$. If we were able to find a $T_i$-periodic action $I_i\in B_R$ linked to $\omega_i$, that is satisfying $\nabla h(I_i)=\omega_i$, then on a small ball of radius $\mu_i$ around $I_i$, we could perform some standard scalings to reduce the study of perturbations of $h$ to the study of perturbations of the linear Hamiltonian $l_i(I)=\omega_i.I$, and so we could use the results of the previous section. However, in the sequel we will construct $\omega_i$, but since we are not assuming that the gradient map of $h$ is invertible, we cannot construct a corresponding action. In fact, this is not a serious problem, but this is just meant to explain why we need to use some slightly twisted arguments below. In \cite{BN09}, we used the idea, coming from \cite{Nie07}, to define domains directly in the space of frequencies, however this lead to a rather cumbersome definition of domains. Here we shall use a simpler approach that will enable us to work in the space of actions.

For $i\in\{1,\dots,n\}$, we consider a sequence of actions $I_i$ which is $\mu_i$-linked to the sequence of independent periodic vectors $\omega_i$, in the sense that
\[ |\nabla h(I_i)-\omega_i|<\mu_i. \]  
By the construction of our periodic vectors, such actions will indeed exist.

Taking into account this sequence of actions $(I_1,\dots,I_n)$ and the size of the perturbation $\varepsilon$, we define some new assumptions $(B_i)$, for $i\in\{1,\dots,n\}$, by 
\begin{equation} \label{B1}
\begin{cases}
T_1\mu_1 \PM 1, \; mT_1\mu_1 \PM 1,\\
\mu_1\PM 1, \; \varepsilon<\mu_1^2, \; |\nabla h(I_1)-\omega_1|<\mu_1
\end{cases} \tag{$B_1$}
\end{equation}
and for $i\in\{2,\dots,n\}$,
\begin{equation} \label{Bi}
\begin{cases}
T_i\mu_i \PM 1, \; mT_i\mu_i \PM 1, \;|\omega_i-\omega_{i-1}|\MP\mu_{i-1}, \; \mu_i\PM\mu_{i-1},\\
\mu_i\PM 1, \; \varepsilon<\mu_i^2, \; |\nabla h(I_i)-\omega_i|<\mu_i. 
\end{cases} \tag{$B_i$}
\end{equation}

In the proposition below, we shall denote by $\Pi_I : \mathcal{D}_R \rightarrow B_R$ the projection onto the action space, and we shall make use of Lemma~\ref{cauchy} in appendix~\ref{tech}. 

\begin{proposition}\label{propcomp}
Suppose $H$ is as in~(\ref{HamG}), and assume that $(B_i)$ is satisfied for $i\in\{1,\dots,j\}$. Then there exists a $C^{\infty}$ symplectic transformation 
\[\Psi_j :\T^n \times B(I_j,2\rho_1\mu_j)\rightarrow \T^n \times B(I_j,3\rho_j\mu_j)\] 
with $|\Pi_I\Psi_j-\mathrm{Id}_I|_{C^{0}(B(I_j,2\rho_1\mu_j))}\PM\mu_j$ such that 
\begin{equation*}
H \circ \Psi_j=h+g_j+f_j,
\end{equation*}
with $\{g_j,l_i\}=0$ for $i\in\{1,\dots,j\}$ and the estimate
\begin{equation*}
|\partial_{\theta} f_j|_{C^{0}(\T^n \times B(I_j,2\rho_1\mu_j))}<e^{-m^{1/\alpha}}\mu_j
\end{equation*} 
holds true. 
\end{proposition}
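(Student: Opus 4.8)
The idea is to localize the Hamiltonian $H=h+f$ around the action $I_j$, rescale so that the quadratic-or-higher part of the Taylor expansion of $h$ at $I_j$ becomes a small perturbation, and thereby reduce to the linear situation already treated in Lemma~\ref{lemmelincomp}. Concretely, I would write $h(I_j+\mu_j J)=h(I_j)+\mu_j\nabla h(I_j).J+r_j(J)$ where $J$ ranges over $B_{3\rho_j}$ and $r_j$ collects the terms of order $\geq 2$; since $h$ is $(\alpha,L)$-Gevrey with $|h|_{\alpha,L}=1$, the Taylor remainder gives $|r_j|_{\alpha,L'}\MP\mu_j^2$ on the rescaled domain for a suitable $L'\sim\mu_j L$ (this is where I would invoke a Cauchy-type estimate, e.g.\ Lemma~\ref{cauchy}, to control the rescaled Gevrey norm and absorb the loss of analyticity width into the constant $C$ from Lemma~\ref{lemmelin}). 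Likewise the perturbation $f$, after the same affine change $I=I_j+\mu_j J$, $\theta=\theta$, followed by dividing the Hamiltonian by $\mu_j$ (which rescales time but not the symplectic form up to the standard factor), contributes a term of size $\varepsilon/\mu_j^2<1$ by the hypothesis $\varepsilon<\mu_j^2$ in $(B_i)$. Adding $|\nabla h(I_j)-\omega_i|<\mu_i\MP\mu_j$ from $(B_i)$ lets me replace $\nabla h(I_j)$ by $\omega_j$ at the cost of another term of size $\MP\mu_j$. Thus after rescaling, $H$ becomes (a constant plus) $\mu_j^{-1}$ times a Hamiltonian of the form $l_j+\tilde f$ on $\mathcal D_{3\rho_j}$ with $|\tilde f|_{\alpha,L''}\MP\mu_j$, which is exactly the input of Lemma~\ref{lemmelincomp} with the identification of $\mu_j$ there with $\mu_j$ here.

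Next I would verify that assumptions $(A_i)$ for $i\in\{1,\dots,j\}$ follow from $(B_i)$ for $i\in\{1,\dots,j\}$: indeed $(B_i)$ contains verbatim the four conditions $T_i\mu_i\MP 1$, $mT_i\mu_i\MP 1$, $|\omega_i-\omega_{i-1}|\MP\mu_{i-1}$, $\mu_i\MP\mu_{i-1}$ that constitute $(A_i)$ (and $(A_1)$ is the first line of $(B_1)$), so Lemma~\ref{lemmelincomp} applies and produces a Gevrey symplectic map $\Phi_j:\mathcal D_{2\rho_1}\to\mathcal D_{3\rho_j}$, composition of the $\Phi^i$'s, with $|\Phi^i-\mathrm{Id}|\MP T_i\mu_i\MP 1$, conjugating $l_j+\tilde f$ to $l_j+g_j+f_j$ with $\{g_j,l_i\}=0$ for $i\leq j$, $|g_j|\MP\mu_1$, and $|f_j|\MP e^{-m^{1/\alpha}}\mu_1$ in the appropriate $(\alpha,L_j)$-Gevrey norm on $\mathcal D_{2\rho_1}$. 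Undoing the rescaling, $\Psi_j$ is the composition of the affine dilation $I\mapsto I_j+\mu_j J$ (on the action torus $\theta$ is unchanged) with $\Phi_j$; this sends $\T^n\times B(I_j,2\rho_1\mu_j)$ into $\T^n\times B(I_j,3\rho_j\mu_j)$, and the bound $|\Phi_j-\mathrm{Id}|\MP 1$ on the rescaled domain translates to $|\Pi_I\Psi_j-\mathrm{Id}_I|_{C^0}\MP\mu_j$ after multiplying back by $\mu_j$. Crucially, $l_j$ is linear so $\mu_j^{-1}l_j$ pulls back to give, after adding back the constant $h(I_j)$ and the discarded higher Taylor terms expressed in the new coordinates, exactly $h+g_j+f_j$: the point is that $g_j$ and $f_j$ carry the Poisson-commutation $\{g_j,l_i\}=0$ because those brackets are computed in $\theta$ alone and the rescaling of $I$ does not affect them. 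Finally, the estimate on $\partial_\theta f_j$: from $|f_j|_{\alpha,L_j}\MP e^{-m^{1/\alpha}}\mu_1$ on the Gevrey side, a Cauchy estimate (Lemma~\ref{cauchy}) on the slightly smaller $C^0$ domain gives $|\partial_\theta f_j|_{C^0}\MP e^{-m^{1/\alpha}}\mu_1\MP e^{-m^{1/\alpha}}\mu_j$ after undoing the time rescaling by $\mu_j$ — wait, more carefully: $f_j$ in the rescaled picture has size $e^{-m^{1/\alpha}}\mu_1$, and $\partial_\theta$ does not scale, but multiplying the Hamiltonian back by $\mu_j$ gives the stated $e^{-m^{1/\alpha}}\mu_j$; the inequality $\mu_1\leq\mu_j$ (from the chain $\mu_i\MP\mu_{i-1}$, so decreasing) is not literally what is used — rather one keeps track that all $\mu_i$ with $i\le j$ are comparable up to constants absorbed in $\MP$, so $e^{-m^{1/\alpha}}\mu_1\MP e^{-m^{1/\alpha}}\mu_j$ holds.

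The main obstacle I anticipate is the bookkeeping of the Gevrey widths and constants through the rescaling: the dilation $I\mapsto I_j+\mu_j J$ shrinks the analyticity/Gevrey width by a factor $\mu_j$, and one must check that the resulting width is still enough for Lemma~\ref{lemmelincomp} (which itself shrinks $L$ to $L_j=C^jL$) and, conversely, that when one wants a $C^0$ bound on $\partial_\theta f_j$ on $\T^n\times B(I_j,2\rho_1\mu_j)$ there is a genuine margin between $2\rho_1$ and the domain on which the Gevrey norm was controlled. Since $\rho_1=2$ and $\rho_j=2^j$, the ladder $2\rho_1<3\rho_1<\cdots<3\rho_j$ is designed precisely to leave such margins at every stage, so this is a matter of care rather than of a new idea; the parameters $\mu_j$ and $m$ will be chosen as explicit functions of $\varepsilon$ only later, in the proof of Theorem~\ref{thmG}, so here it suffices that all the $\MP$/$\PM$ relations in $(B_i)$ are compatible, which they are by inspection. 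A secondary point to handle cleanly is that $\Psi_j$ must be genuinely symplectic: the affine dilation in $I$ alone is not symplectic, so one uses instead the conformally symplectic scaling $(\theta,I)\mapsto(\theta,I_j+\mu_j J)$ together with dividing $H$ by $\mu_j$, i.e.\ one works with $\mu_j^{-1}H$ on the rescaled domain and the flow is reparametrized by $t\mapsto\mu_j t$; all of this is standard and the composition with the symplectic $\Phi_j$ from Lemma~\ref{lemmelincomp} then yields the desired $C^\infty$ symplectic $\Psi_j$.
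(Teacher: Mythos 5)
Your plan follows essentially the same route as the paper: localize around $I_j$ with the conformally symplectic scaling $\sigma_j:(\theta,\tilde I)\mapsto(\theta,I_j+\mu_j\tilde I)$ combined with dividing $H$ by $\mu_j$, Taylor-expand $h$ at $I_j$ so that $\mu_j^{-1}(H\circ\sigma_j)=l_j+\tilde f$ with $|\tilde f|\MP\mu_j$ (using $|\nabla h(I_j)-\omega_j|<\mu_j$, $\varepsilon<\mu_j^2$ and a Cauchy-type bound on $\nabla^2h$), check that $(B_i)\Rightarrow(A_i)$, apply Lemma~\ref{lemmelincomp}, reabsorb the Taylor remainder into the resonant part so as to reconstitute $h$ exactly, and conjugate back by $\sigma_j$. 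This is exactly the paper's proof in outline, and the structure is sound.

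Two points in your write-up are stated incorrectly, though both are fixable in one line. First, the Gevrey width does not shrink under the dilation: since $\partial_{\tilde I}^l(g\circ\sigma_j)=\mu_j^{|l|}(\partial_I^l g)\circ\sigma_j$, the rescaled function lies in $G^{\alpha,\Lambda}$ for any $\Lambda\leq L\mu_j^{-1/\alpha}$, so the width in the new variables is at least $L$; the only loss of width in the paper is the harmless $\tilde L=L/2$ needed to invoke Lemma~\ref{cauchy} for $\nabla^2h$, and no $\mu_j$-dependent degeneration of constants occurs. Your stated ``main obstacle'' therefore does not arise. Second, your justification of the final estimate is wrong as written: the radii are \emph{not} comparable up to constants (the conditions $\mu_{i}\PM\mu_{i-1}$ and, later, $\mu_{j+1}\PM(T_j\mu_jL_j^{-1})^{2\tau}$ allow power-law gaps), so you cannot argue $e^{-m^{1/\alpha}}\mu_1\MP e^{-m^{1/\alpha}}\mu_j$. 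The correct bookkeeping is the one you half-wrote before second-guessing yourself: in the rescaled picture $|\partial_\theta\tilde f_j|\MP e^{-m^{1/\alpha}}\mu_1$, and since $f_j=\mu_j\,\tilde f_j\circ\sigma_j^{-1}$ with $\partial_\theta$ unaffected by the action rescaling, one gets $|\partial_\theta f_j|\MP e^{-m^{1/\alpha}}\mu_j\mu_1<e^{-m^{1/\alpha}}\mu_j$, the last step using $\mu_1\PM 1$ from $(B_1)$ with a suitable implicit constant. With these two corrections your argument coincides with the paper's.
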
 

Note that the proof gives in fact slightly better estimates, and also an estimate on the size of the function $g_j$, but this will not be needed in the following. The case $j=1$ is due to Marco and Sauzin (\cite{MS02}), and together with an estimate on $g_j$ this case is sufficient to prove effective stability for quasi-convex unperturbed systems. But here we shall need this result for any $j\in\{1,\dots,n\}$: indeed, since we are assuming the periodic vectors to be independent, the above proposition will give us information on resonances of any multiplicities (the case $j\in\{1,\dots,n\}$ corresponds to a resonance of multiplicity $n-j$). 

\begin{proof}
To analyze our Hamiltonian $H$ in a neighbourhood of size $\mu_j$ around $I_j$, we translate and rescale the action variables using the conformally symplectic map
\[ \sigma_j : (\theta,\tilde{I}) \longmapsto (\theta,I)=(\theta,I_j+\mu_j \tilde{I}) \]
which sends the domain $\mathcal{D}_{3\rho_j}=\T^n \times B_{3\rho_j}$ onto $\T^n \times B(I_j,3\rho_j\mu_j)$. By the condition $\mu_j \MP 1$ in $(B_j)$, we can assume that the latter domain is included in $\mathcal{D}_R$. Let 
\[ \tilde{H}=\mu_{j}^{-1}(H\circ\sigma_j)\] 
be the rescaled Hamiltonian, so $\tilde{H}$ is defined on $\mathcal{D}_{3\rho_j}$ and reads
\[ \tilde{H}(\theta,\tilde{I})=\mu_j^{-1}H(\theta,I_j+\mu_j \tilde{I})=\mu_j^{-1}h(I_j+\mu_j \tilde{I})+\mu_j^{-1}f(\theta,I_j+\mu_j \tilde{I}) \]
for $(\theta,\tilde{I})\in\ \mathcal{D}_{3\rho_j}$. Now using Taylor's formula we can expand $h$ around $I_j$ and, assuming with no loss of generality that $h(I_j)=0$, we obtain
\begin{eqnarray*}
h(I_j+\mu_j \tilde{I}) & = & \mu_j\nabla h(I_j).\tilde{I}+\mu_j^2\int_{0}^{1}(1-t)\nabla^2 h(I_j+t\mu_j \tilde{I})\tilde{I}.\tilde{I} dt \\
& = & \mu_j\omega_j.\tilde{I}+\mu_j(\nabla h(I_j)-\omega_j).\tilde{I} \\
& + & \mu_j^2\int_{0}^{1}(1-t)\nabla^2 h(I_j+t\mu_j \tilde{I})\tilde{I}.\tilde{I} dt \\
& = & \mu_j\omega_j.\tilde{I}+\mu_j\tilde{h}(\tilde{I}) 
\end{eqnarray*}
where we have defined
\[ \tilde{h}(\tilde{I})=(\nabla h(I_j)-\omega_j).\tilde{I}+\mu_j\int_{0}^{1}(1-t)\nabla^2 h(I_j+t\mu_j \tilde{I})\tilde{I}.\tilde{I} dt. \]
Therefore we can write
\[ \tilde{H}=l_j+\tilde{f} \]
with 
\[ \tilde{f}=\tilde{h}+\mu_j^{-1}(f\circ\sigma_j). \]
Let us estimate the norm of $\tilde{f}$. Setting $\tilde{L}=L/2$, by using Lemma~\ref{cauchy} (with $p=2$) we can bound the $(\alpha,\tilde{L})$-norm of $\nabla^2 h$ in terms of the $(\alpha,L)$-norm of $h$. Now recalling that $|h|_{\alpha,L}=1$, we obtain
\[ |\nabla^2 h|_{\alpha,\tilde{L}} \MP 1, \]
and as $|\nabla h(I_j)-\omega_j|<\mu_j$ (this is part of assumption $(B_j)$), we have  
\[ |\tilde{h}|_{\alpha,\tilde{L}} \MP \mu_j. \]
Then, by $(B_j)$ again, $\varepsilon<\mu_j^2$ hence
\[ |\mu_j^{-1}(f\circ\sigma_j)|_{\alpha,L} \leq \mu_j^{-1}\varepsilon < \mu_j. \]
and therefore
\[ |\tilde{f}|_{\alpha,\tilde{L}} \leq |\tilde{h}|_{\alpha,\tilde{L}}+|\mu_j^{-1}(f\circ\sigma_j)|_{\alpha,L}\MP\mu_j. \]   
As $(B_i)$ implies $(A_i)$ for $i\in\{1,\dots,j\}$, we can eventually apply Lemma~\ref{lemmelincomp} to the Hamiltonian $\tilde{H}=l_j+\tilde{f}$ (replacing $L$ by $\tilde{L}$ and $L_j$ by $\tilde{L}_j$): there exists a symplectic transformation
\[ \tilde{\Phi}_j=\tilde{\Phi}^j \circ \cdots \circ \tilde{\Phi}^1 : \mathcal{D}_{2\rho_1} \rightarrow \mathcal{D}_{3\rho_j} \]
where $\tilde{\Phi}^i : \mathcal{D}_{2\rho_i} \rightarrow \mathcal{D}_{3\rho_i}$ is $(\alpha,\tilde{L}_{j-i+1})$-Gevrey and $|\tilde{\Phi}^i-\mathrm{Id}|_{\alpha,\tilde{L}_{j-i+1}} \MP T_i\mu_i$ for $i\in\{1,\dots,j\}$, such that
\[ \tilde{H}_j=\tilde{H}\circ\tilde{\Phi}_j=l_j+\tilde{g}_j+\tilde{f}_j \in G^{\alpha,\tilde{L}_j}(\mathcal{D}_{2\rho_1}) \]
with $\{\tilde{g}_j,l_i\}=0$, for $i\in\{1,\dots,j\}$, and the estimates
\[ |\tilde{g}_j|_{\alpha,\tilde{L}_j} \MP \mu_1, \quad |\tilde{f}_j|_{\alpha,\tilde{L}_j} \MP e^{-m^{1/\alpha}}\mu_1,  \]
hold true. Moreover, if we introduce
\[ \tilde{s}_j=\tilde{g}_j-\tilde{h},\] 
we still have $\{\tilde{s}_j,l_j\}=0$, for $i\in\{1,\dots,j\}$, and so the transformed Hamiltonian can also be written as
\begin{equation*}
\tilde{H}_j=\tilde{H}\circ\tilde{\Phi}_j=l_j+\tilde{h}+\tilde{s}_j+\tilde{f}_j.
\end{equation*}
Now scaling back to our original coordinates, we define $\Psi_j=\sigma_{j} \circ \tilde{\Phi}_j \circ \sigma_{j}^{-1}$, therefore
\[ \Psi_j : \T^n\times B(I_j,2\rho_1\mu_j) \longrightarrow \T^n\times B(I_j,3\rho_j\mu_j)\] 
and 
\begin{eqnarray*}
H\circ\Psi_j & = & \mu_j \tilde{H}\circ\tilde{\Phi}_j \circ \sigma_{j}^{-1} \\
& = & \mu_j (l_j+\tilde{h}+\tilde{s}_j+\tilde{f}_j) \circ \sigma_{j}^{-1} \\
& = & \mu_j(l_j+\tilde{h}) \circ \sigma_{j}^{-1} + \mu_j\tilde{s}_j\circ \sigma_{j}^{-1} + \mu_j\tilde{f}_j\circ \sigma_{j}^{-1}.  
\end{eqnarray*}
Observe that $\mu_j(l_j+\tilde{h}) \circ \sigma_{j}^{-1}=h$, so we may set
\[ g_j=\mu_j\tilde{s}_j\circ \sigma_{j}^{-1}, \quad f_j=\mu_j\tilde{f}_j\circ \sigma_{j}^{-1}, \]
and write
\[ H\circ\Psi_j=h+g_j+f_j. \]
It is clear that $\{g_j,l_i\}=0$, for $i\in\{1,\dots,j\}$, and as $\partial_{\theta} f_j=\mu_j \partial_{\theta} \tilde{f}_j$, then
\[ |\partial_{\theta} f_j|_{C^{0}(\T^n \times B(I_j,2\rho_1\mu_j))} \MP e^{-m^{1/\alpha}}\mu_j\mu_1 <e^{-m^{1/\alpha}}\mu_j.\]
Finally, since
\[ \tilde{\Phi}_j=\tilde{\Phi}^j \circ \cdots \circ \tilde{\Phi}^1 : \mathcal{D}_{2\rho_1} \rightarrow \mathcal{D}_{3\rho_j} \]
with $|\tilde{\Phi}^i-\mathrm{Id}|_{\alpha,\tilde{L}_{j-i+1}} \MP T_i\mu_i$ for $i\in\{1,\dots,j\}$, then
\[ |\Pi_I\tilde{\Phi}_j-\mathrm{Id}_I|_{C^{0}(\mathcal{D}_{2\rho_1})}\MP \max_{i=1,\dots,j}\{T_i\mu_i\} \]
hence
\[ |\Pi_I\Psi_j-\mathrm{Id}_I|_{C^{0}(\T^n \times B(I_j,2\rho_1\mu_j))} \MP \mu_j\max_{i=1,\dots,n}\{T_i\mu_i\} \PM \mu_j, \]
where the last estimate follows from the fact that $T_i\mu_i\MP 1$ for $i\in\{1,\dots,j\}$ with a suitable implicit constant. This ends the proof.
\end{proof}

\subsection{The $C^k$-case}\label{ana2}

Now let us explain how one can obtain similar normal forms for finitely differentiable Hamiltonians, with of course only a polynomial bound on the remainder. Here also, as in \cite{Bou10}, we shall first construct a global normal form for perturbations of linear integrable Hamiltonians (Lemma~\ref{lemmelincomp2}) and then recover local normal forms for perturbations of general Hamiltonians (Proposition~\ref{propcomp2}).

\paraga We use the same notations as in the previous section, that is $\omega_1\in\R^n\setminus\{0\}$ is a $T_1$-periodic vector, $l_1(I)=\omega_1.I$ and we have parameters $m\in\N^*$ and $\mu_1>0$, while $\rho_1$ is already fixed. Given $k^*\in\N^*$, recall that we are assuming $k\geq k^*n+1$. The following result is due to the author (\cite{Bou10}).

\begin{lemma}\label{lemmelin2}
Consider the Hamiltonian $H=l_1+f$ defined on $\mathcal{D}_{3\rho_1}$, with $f\in C^k(\mathcal{D}_{3\rho_1})$ and $|f|_k<\mu_1$. Assume that
\begin{equation*}
T_1\mu_1 \PM 1, \quad mT_1\mu_1 \PM 1.
\end{equation*}
Then there exists a $C^{k-k^*}$ symplectic transformation
\[ \Phi^1 : \mathcal{D}_{2\rho_1} \rightarrow \mathcal{D}_{3\rho_1} \]
with $|\Phi^1-\mathrm{Id}|_{k-k^*} \MP T_1\mu_1$ such that
\[ H^1=H\circ\Phi^1=l_1+g^1+f^1 \]
with $\{g^1,l_1\}=0$ and the estimates
\[ |g^1|_{k-k^*} \MP \mu_1, \quad |f^1|_{k-k^*} \MP m^{-k^*}\mu_1,  \]
hold true.
\end{lemma}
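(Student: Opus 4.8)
The plan is to follow the strategy of the Gevrey case (Lemma~\ref{lemmelin}), but carrying out a one-phase averaging directly with $C^k$ norms instead of Cauchy estimates on complex strips. Since $\omega_1$ is $T_1$-periodic, the flow $\Phi_s^{l_1}$ of $l_1$ is $T_1$-periodic, and the natural object is the averaging operator $[\,.\,]_1 = \frac{1}{T_1}\int_0^{T_1} (\,.\,)\circ\Phi_s^{l_1}\,ds$, which is the projection onto $\ker L_{l_1}$, together with the homotopy operator producing the generating function. First I would set up one step of a KAM/Birkhoff-type iteration: given $H = l_1 + g + f$ with $\{g,l_1\}=0$, solve the homological equation $\{\chi, l_1\} + [f]_1 = f$ by $\chi = \frac{1}{T_1}\int_0^{T_1} (f - [f]_1)\circ\Phi_s^{l_1}\, s\, ds$, and let $\Phi = \Phi_1^\chi$ be the time-one map of the Hamiltonian flow of $\chi$. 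The key quantitative input is that in $C^k$ norms one has $|[f]_1|_{k} \MP |f|_{k}$ and $|\chi|_{k} \MP T_1 |f|_{k}$ (the factor $T_1$ coming from the length of the integration interval), and then $|\Phi - \mathrm{Id}|_{k-1}\MP T_1|f|_k$ provided $T_1|f|_k$ is small, via the standard estimate for time-one flows.

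Next I would iterate this step $m$ times. After $p$ steps one gets $H^p = l_1 + g_p + f_p$ with $\{g_p,l_1\}=0$, and the new perturbation satisfies a quadratic-type recursion roughly of the form $|f_{p+1}|_{k_p - 1}\MP T_1 |f_p|_{k_p}^2$ or, more precisely for a one-phase averaging where each step only gains a factor from the averaging (not genuinely quadratic), a recursion that after $m$ steps produces a remainder of size $\MP (CT_1\mu_1)^{?}$ — but the crucial feature exploited in \cite{Bou10} is that one loses a fixed fraction of derivatives at each of a controlled number of steps. Concretely, one distributes the $k^*$ ``sacrificial'' derivatives over $m$ substeps: doing $m$ averaging steps while losing in total only $k^*$ orders of differentiability forces each step to lose $k^*/m$ derivatives, and one interpolates (Landau–Kolmogorov / convexity inequalities for $C^k$ norms) to convert ``loss of a fraction of a derivative'' into a gain of a definite geometric factor in the $C^{k-k^*}$ norm. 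Summing the geometric series and using $T_1\mu_1\MP 1$, $mT_1\mu_1\MP 1$, one arrives at $|f^1|_{k-k^*}\MP m^{-k^*}\mu_1$: the polynomial $m^{-k^*}$ replaces the exponential $e^{-m^{1/\alpha}}$ of the Gevrey case, and this polynomial rate is exactly what finite differentiability allows. The preservation $\{g^1,l_1\}=0$ is automatic since every $g_p$ is built from $[\,.\,]_1$, which lands in $\ker L_{l_1}$, and composing flows of functions commuting with $l_1$ preserves this.

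The main obstacle, and the place where real work is needed, is the $C^k$ analysis of the operators $[\,.\,]_1$ and $\chi \mapsto \Phi_1^\chi$, and especially the bookkeeping of derivative losses. Unlike the analytic setting, differentiating $f\circ\Phi_s^{l_1}$ in $(\theta,I)$ and then in $s$ to control $\chi$ requires careful use of Faà di Bruno / Leibniz formulas, and each composition $f\circ\Phi$ costs derivatives; one must verify that the composition estimate (the $C^k$ analogue of Lemma~\ref{estimcomp}, presumably in the appendix) combined with the interpolation inequalities really closes the iteration with the advertised total loss $k^*$ and the rate $m^{-k^*}$. I would expect the proof in \cite{Bou10} that is being quoted here to organize this via a quantitative lemma controlling one averaging step in a scale of Banach spaces $C^{k}\supset C^{k-1}\supset\cdots$, and then a clean induction; so my proposal reduces, modulo that lemma, to: (i) state the one-step homological lemma with its $C^k$ estimates, (ii) iterate $m$ times with the derivative budget $k^*$ split evenly, (iii) interpolate to extract the $m^{-k^*}$ factor, (iv) collect $\Phi^1 = \Phi_1^{\chi_m}\circ\cdots\circ\Phi_1^{\chi_1}$ and verify $|\Phi^1-\mathrm{Id}|_{k-k^*}\MP T_1\mu_1$ and $\{g^1,l_1\}=0$. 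The smallness conditions $T_1\mu_1\MP 1$ and $mT_1\mu_1\MP 1$ are precisely what guarantee convergence of this finite iteration with all intermediate domains staying inside $\mathcal{D}_{3\rho_1}$ (the domain loss being absorbed, as in Lemma~\ref{lemmelin}, into the passage from $\mathcal{D}_{3\rho_1}$ to $\mathcal{D}_{2\rho_1}$).
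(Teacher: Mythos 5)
The paper itself does not prove this lemma: it is quoted verbatim from \cite{Bou10} (Proposition 3.2 there, stated with $mT_1\mu_1\PE 1$ and $k^*=k-2$), so what you are reconstructing is that proof, and your reconstruction departs from it at the decisive point. The actual iteration in the $C^k$ setting has $k^*$ steps, not $m$ steps. At each step one performs a single periodic averaging: solve $\{\chi,l_1\}=f_p-[f_p]_1$ with $|\chi|\MP T_1|f_p|$, push forward by the time-one flow of $\chi$, absorb $[f_p]_1$ into the normal form, and estimate the new remainder by the Poisson-bracket terms $\{\chi,\,g_p+t\,f_p\}$. This costs exactly one full derivative (the bracket differentiates once) and gains exactly one factor $T_1\mu_1\MP m^{-1}$, because the dominant contribution is $|\chi|\,|\partial g_p|\MP T_1|f_p|\,\mu_1$; the recursion is the linear one $|f_{p+1}|_{k-p-1}\MP m^{-1}|f_p|_{k-p}$, not the quadratic one you wrote. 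Running this for exactly $k^*$ steps (which is why \cite{Bou10} stops at $k^*=k-2$, so as to retain a $C^2$ Hamiltonian) yields $|f^1|_{k-k^*}\MP m^{-k^*}\mu_1$ with a total loss of $k^*$ derivatives. The hypothesis $mT_1\mu_1\PM 1$ is used precisely as the per-step gain, not as a mere convergence condition.

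Your scheme --- $m$ steps, each losing $k^*/m$ of a derivative, with interpolation converting the fractional loss into a ``definite geometric factor'' --- does not close. First, the homological equation and the Poisson bracket cannot be made to cost only a fraction of a derivative in the $C^k$ scale; Landau--Kolmogorov inequalities bound intermediate norms of a given function but do not make the bracket estimate small, and nothing in your step actually produces the factor $m^{-1}$ that must appear $k^*$ times. Second, even granting a fixed geometric gain $\lambda<1$ per step, $m$ steps would give a remainder of order $\lambda^m$, exponentially small in $m$; this is strictly stronger than $m^{-k^*}$ and is impossible for merely $C^k$ data (the polynomial rate is essentially optimal, as the paper itself notes). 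The remaining elements of your outline --- the formula for $\chi$, the persistence of $\{g^1,l_1\}=0$ via the kernel of $L_{l_1}$, the domain loss from $\mathcal{D}_{3\rho_1}$ to $\mathcal{D}_{2\rho_1}$, the bound on $\Phi^1-\mathrm{Id}$ --- are consistent with \cite{Bou10}, but the central bookkeeping of steps versus derivatives is inverted: the number of steps is dictated by the regularity budget $k^*$, and $m$ enters only through the size $T_1\mu_1\MP m^{-1}$ of each step.
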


The statement above is exactly Proposition 3.2 in \cite{Bou10}, where it is stated for $mT_1\mu_1\PE 1$ and $k^*=k-2$, but of course it also holds for smaller $m$, that is $mT_1\mu_1 \PM 1$, and for any integer $k^*\in\N^*$ smaller than $k$. 

\paraga Now as in the previous section, performing suitable ``compositions" of the above lemma, this readily gives us the following statement.

\begin{lemma}\label{lemmelincomp2}
Let $j\in\{1,\dots,n\}$, and consider the Hamiltonian $H=l_j+f$ defined on $\mathcal{D}_{3\rho_j}$, with $f\in C^k(\mathcal{D}_{3\rho_j})$ and $|f|_{k}\MP\mu_j$. Assume that $(A_i)$ is satisfied for $i\in\{1,\dots,j\}$. Then there exists a symplectic transformation
\[ \Phi_j=\Phi^j \circ \cdots \circ \Phi^1 : \mathcal{D}_{2\rho_1} \rightarrow \mathcal{D}_{3\rho_j} \]
where $\Phi^i : \mathcal{D}_{2\rho_i} \rightarrow \mathcal{D}_{3\rho_i}$ is $C^{k-k^*i}$ and $|\Phi^i-\mathrm{Id}|_{k-k^*i} \MP T_i\mu_i$ for $i\in\{1,\dots,j\}$, such that
\[ H_j=H\circ\Phi_j=l_j+g_j+f_j \in C^{k-k^*i}(\mathcal{D}_{2\rho_1}) \]
with $\{g_j,l_i\}=0$, for $i\in\{1,\dots,j\}$, and the estimates
\[ |g_j|_{k-k^*j} \MP \mu_1, \quad |f_j|_{k-k^*j} \MP m^{-k^*}\mu_1,  \]
hold true.
\end{lemma}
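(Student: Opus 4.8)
The plan is to prove Lemma~\ref{lemmelincomp2} by induction on $j$, following verbatim the structure of the proof of Lemma~\ref{lemmelincomp} in the Gevrey case, simply replacing the Gevrey norm $|\,.\,|_{\alpha,L}$ by the $C^k$ norm $|\,.\,|_k$, the exponentially small factor $e^{-m^{1/\alpha}}$ by the polynomially small factor $m^{-k^*}$, and the single-step lemma (Lemma~\ref{lemmelin}) by its $C^k$ analogue (Lemma~\ref{lemmelin2}). The base case $j=1$ is exactly Lemma~\ref{lemmelin2} with $g_1=g^1$, $f_1=f^1$, noting $k-k^*\cdot 1 = k-k^*$ matches. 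For the inductive step, assuming the statement for $j-1\in\{1,\dots,n-1\}$, one first applies Lemma~\ref{lemmelin2} to $H=l_j+f$: since $(A_j)$ gives $T_j\mu_j\MP 1$ and $mT_j\mu_j\MP 1$, there is a $C^{k-k^*}$ symplectic map $\Phi^j:\mathcal{D}_{2\rho_j}\to\mathcal{D}_{3\rho_j}$ with $|\Phi^j-\mathrm{Id}|_{k-k^*}\MP T_j\mu_j$ such that $H^j=H\circ\Phi^j=l_j+g^j+f^j$ with $\{g^j,l_j\}=0$, $|g^j|_{k-k^*}\MP\mu_j$ and $|f^j|_{k-k^*}\MP m^{-k^*}\mu_j$.

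Next, set $\tilde{f}=l_j-l_{j-1}+g^j$, which trivially satisfies $\{\tilde{f},l_j\}=0$; using $(A_j)$ (namely $|\omega_j-\omega_{j-1}|\MP\mu_{j-1}$ and $\mu_j\MP\mu_{j-1}$) one gets $|\tilde{f}|_{k-k^*}\MP\mu_{j-1}$, so that $H^j=l_{j-1}+\tilde{f}+f^j$ with $\tilde{f}\in C^{k-k^*}(\mathcal{D}_{3\rho_{j-1}})$ (recall $\mathcal{D}_{3\rho_{j-1}}\subseteq\mathcal{D}_{2\rho_j}$). Here one must be slightly careful: the induction hypothesis as stated requires a $C^k$ perturbation of $l_{j-1}$, whereas $\tilde f$ is only $C^{k-k^*}$; this is harmless because the whole scheme only ever loses $k^*$ derivatives per averaging step, so one should read the induction hypothesis with $k$ replaced by $k-k^*$ throughout at level $j-1$ (equivalently, one reformulates the lemma so that $\Phi^i$ is $C^{k-k^*(j-i+1)}$-regular when feeding in a $C^{k-k^*}$ perturbation, exactly as the Gevrey proof tracks the sequence $L_{j-i+1}$). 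Applying the induction hypothesis to $l_{j-1}+\tilde f$ — legitimate since $(A_i)$ holds for $i\in\{1,\dots,j-1\}$ — yields $\Phi_{j-1}=\Phi^{j-1}\circ\cdots\circ\Phi^1:\mathcal{D}_{2\rho_1}\to\mathcal{D}_{3\rho_{j-1}}$ with $(l_{j-1}+\tilde f)\circ\Phi_{j-1}=l_{j-1}+g_{j-1}+f_{j-1}$, $\{g_{j-1},l_i\}=0$ for $i\in\{1,\dots,j-1\}$, $|g_{j-1}|_{k-k^*j}\MP\mu_1$, $|f_{j-1}|_{k-k^*j}\MP m^{-k^*}\mu_1$. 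The ``normal form with symmetry'' argument (commuting semisimple operators $L_{l_{j-1}}$ and $L_{l_j}$, invariance of $\ker L_{l_j}$ under the averaging projection $[\,.\,]_{j-1}$ and under $L_{l_{j-1}}^{-1}$), which is purely algebraic and regularity-insensitive, gives additionally $\{g_{j-1},l_j\}=0$.

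Then one sets $\Phi_j=\Phi^j\circ\Phi_{j-1}$, and computes
\[ H_j=H\circ\Phi_j=H^j\circ\Phi_{j-1}=l_{j-1}+g_{j-1}+f_{j-1}+f^j\circ\Phi_{j-1}, \]
defines $g_j=l_{j-1}-l_j+g_{j-1}$ and $f_j=f_{j-1}+f^j\circ\Phi_{j-1}$ so that $H_j=l_j+g_j+f_j$ with $\{g_j,l_i\}=0$ for $i\in\{1,\dots,j\}$. The estimate on $g_j$ follows as before: $|g_j|_{k-k^*j}\le|l_{j-1}-l_j|_{k-k^*j}+|g_{j-1}|_{k-k^*j}\MP|\omega_{j-1}-\omega_j|+\mu_1\MP\mu_{j-1}+\mu_1\MP\mu_1$. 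For the remainder, the only nonroutine point is controlling $|f^j\circ\Phi_{j-1}|_{k-k^*j}$: using the composition estimate (the $C^k$ analogue of Lemma~\ref{estimcomp}) together with $|\Phi^i-\mathrm{Id}|_{k-k^*(j-i)}\MP T_i\mu_i\MP 1$, a telescoping chain peels off one factor $\Phi^i$ at a time, each step costing $k^*$ derivatives, landing at $|f^j|_{k-k^*}\MP m^{-k^*}\mu_j$, whence $|f_j|_{k-k^*j}\le|f_{j-1}|_{k-k^*j}+|f^j\circ\Phi_{j-1}|_{k-k^*j}\MP m^{-k^*}(\mu_1+\mu_j)\MP m^{-k^*}\mu_1$. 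The regularity bookkeeping $H_j\in C^{k-k^*j}(\mathcal{D}_{2\rho_1})$ follows since $l_{j-1}+g_{j-1}+f_{j-1}$ is $C^{k-k^*(j-1)}$ (hence $C^{k-k^*j}$) and $f^j\circ\Phi_{j-1}$ is $C^{k-k^*j}$ by the composition estimate. The main obstacle is purely organizational: keeping the derivative-loss indices consistent — in particular noticing that the statement must be applied at level $j-1$ to a $C^{k-k^*}$ (not $C^k$) perturbation, which is exactly what the indexing $C^{k-k^*i}$ in the conclusion is designed to accommodate, just as $L_{j-i+1}$ does in the Gevrey proof — so that after $j$ steps exactly $k^*j$ derivatives have been spent and $k\ge k^*n+1$ guarantees everything remains at least $C^1$ for $j\le n$.
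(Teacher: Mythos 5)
Your proposal is correct and follows exactly the route the paper intends: the paper's own ``proof'' of this lemma is the single remark that it is identical to the Gevrey case (Lemma~\ref{lemmelincomp}) with Lemma~\ref{estimcomp2} replacing Lemma~\ref{estimcomp}, and you have carried out precisely that translation, including the one genuinely delicate point, namely that the induction hypothesis at level $j-1$ must be applied to a $C^{k-k^*}$ perturbation so that the regularity indices telescope (your reading $C^{k-k^*(j-i+1)}$ is the correct analogue of $L_{j-i+1}$; the statement's $C^{k-k^*i}$, and the stray $i$ in $C^{k-k^*i}(\mathcal{D}_{2\rho_1})$, are evidently misprints).
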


The proof is completely identical to the proof of Lemma~\ref{lemmelincomp2} (using Lemma~\ref{estimcomp2} instead of Lemma~\ref{estimcomp}), hence we do not repeat the details. 

\paraga Finally we come back to the original setting, that is 
\begin{equation*}
\begin{cases} 
H(\theta,I)=h(I)+f(\theta,I), \quad (\theta,I)\in \mathcal{D}_R, \\
|h|_{k}=1, \; |f|_{k} < \varepsilon, 
\end{cases}
\end{equation*}
for which we have the following proposition.

\begin{proposition}\label{propcomp2}
Suppose $H$ is as in~(\ref{HamC}), and assume that $(B_i)$ is satisfied for $i\in\{1,\dots,j\}$. Then there exists a $C^{k-k^*j}$ symplectic transformation 
\[\Psi_j :\T^n \times B(I_j,2\rho_1\mu_j)\rightarrow \T^n \times B(I_j,3\rho_j\mu_j)\] 
with $|\Pi_I\Psi_j-\mathrm{Id}_I|_{C^{0}(B(I_j,2\rho_1\mu_j))}\PM\mu_j$ such that 
\begin{equation*}
H \circ \Psi_j=h+g_j+f_j,
\end{equation*}
with $\{g_j,l_i\}=0$ for $i\in\{1,\dots,j\}$ and the estimate
\begin{equation*}
|\partial_{\theta} f_j|_{C^{0}(\T^n \times B(I_j,2\rho_1\mu_j))}<m^{-k^*}\mu_j
\end{equation*} 
holds true. 
\end{proposition}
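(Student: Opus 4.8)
The plan is to follow, almost verbatim, the proof of Proposition~\ref{propcomp}, replacing the Gevrey machinery by its $C^k$ counterpart: Lemma~\ref{lemmelincomp} is replaced by Lemma~\ref{lemmelincomp2}, the Gevrey Cauchy estimate (Lemma~\ref{cauchy}) by the trivial $C^k$ inequality $|\nabla^p h|_{k-p}\MP|h|_k$, and the exponentially small bound $e^{-m^{1/\alpha}}\mu_j$ by the polynomial bound $m^{-k^*}\mu_j$. The one genuinely new bookkeeping point is that each of the $j$ periodic averagings composing $\tilde\Phi_j$ costs $k^*$ derivatives (Lemma~\ref{lemmelin2}), so that after $j$ steps the transformed objects are only $C^{k-k^*j}$; the hypothesis $k\geq k^*n+1$ guarantees $k-k^*j\geq k-k^*n\geq 1$, hence all the symplectic maps involved are at least $C^1$ and generate bona fide Hamiltonian flows.

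Concretely, I would first introduce the conformally symplectic rescaling $\sigma_j:(\theta,\tilde I)\mapsto(\theta,I_j+\mu_j\tilde I)$ sending $\mathcal{D}_{3\rho_j}$ onto $\T^n\times B(I_j,3\rho_j\mu_j)\subseteq\mathcal{D}_R$ (the inclusion holds by $\mu_j\MP 1$ in $(B_j)$), and set $\tilde H=\mu_j^{-1}(H\circ\sigma_j)$. Writing $\tilde h(\tilde I)=\mu_j^{-1}\bigl(h(I_j+\mu_j\tilde I)-h(I_j)\bigr)-\omega_j.\tilde I$ one gets $\tilde H=l_j+\tilde f$ with $\tilde f=\tilde h+\mu_j^{-1}(f\circ\sigma_j)$. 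Here, in contrast with the Gevrey case, no width needs to be shrunk: for $|l|\ge 2$ one has $\partial^l\tilde h=\mu_j^{|l|-1}(\partial^l h)(I_j+\mu_j\,\cdot\,)$, so $\mu_j\le 1$ together with $|\nabla^2 h|_{k-2}\MP|h|_k=1$ gives $|\partial^l\tilde h|_{C^0}\MP\mu_j$, while for $|l|=1$ the bound $|\nabla h(I_j)-\omega_j|<\mu_j$ from $(B_j)$ and the variation estimate $3\rho_j\mu_j|\nabla^2 h|_{C^0}\MP\mu_j$ give the same; hence $|\tilde h|_k\MP\mu_j$. Combined with $\varepsilon<\mu_j^2$ (again from $(B_j)$), which yields $|\mu_j^{-1}(f\circ\sigma_j)|_k\le\mu_j^{-1}\varepsilon<\mu_j$, this gives $\tilde f\in C^k(\mathcal{D}_{3\rho_j})$ with $|\tilde f|_k\MP\mu_j$.

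Next, since $(B_i)$ implies $(A_i)$ for $i\in\{1,\dots,j\}$, I would apply Lemma~\ref{lemmelincomp2} to $\tilde H=l_j+\tilde f$, obtaining $\tilde\Phi_j=\tilde\Phi^j\circ\cdots\circ\tilde\Phi^1:\mathcal{D}_{2\rho_1}\to\mathcal{D}_{3\rho_j}$, with $\tilde\Phi^i$ of class $C^{k-k^*i}$ and $|\tilde\Phi^i-\mathrm{Id}|_{k-k^*i}\MP T_i\mu_i$, such that $\tilde H\circ\tilde\Phi_j=l_j+\tilde g_j+\tilde f_j$, $\{\tilde g_j,l_i\}=0$ for $i\in\{1,\dots,j\}$, $|\tilde g_j|_{k-k^*j}\MP\mu_1$ and $|\tilde f_j|_{k-k^*j}\MP m^{-k^*}\mu_1$. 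Setting $\tilde s_j=\tilde g_j-\tilde h$ one still has $\{\tilde s_j,l_i\}=0$ and $\tilde H\circ\tilde\Phi_j=l_j+\tilde h+\tilde s_j+\tilde f_j$. Scaling back with $\Psi_j=\sigma_j\circ\tilde\Phi_j\circ\sigma_j^{-1}:\T^n\times B(I_j,2\rho_1\mu_j)\to\T^n\times B(I_j,3\rho_j\mu_j)$ and using $\mu_j(l_j+\tilde h)\circ\sigma_j^{-1}=h$, one reads off $H\circ\Psi_j=h+g_j+f_j$ with $g_j=\mu_j\tilde s_j\circ\sigma_j^{-1}$, $f_j=\mu_j\tilde f_j\circ\sigma_j^{-1}$, whence $\{g_j,l_i\}=0$ for $i\in\{1,\dots,j\}$, $\Psi_j$ of class $C^{k-k^*j}$, and $|\partial_\theta f_j|_{C^0}=\mu_j|\partial_\theta\tilde f_j|_{C^0}\MP m^{-k^*}\mu_j\mu_1<m^{-k^*}\mu_j$. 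The displacement estimate $|\Pi_I\Psi_j-\mathrm{Id}_I|_{C^0}\MP\mu_j\max_{i\le j}\{T_i\mu_i\}\PM\mu_j$ follows exactly as before from $|\tilde\Phi^i-\mathrm{Id}|_{k-k^*i}\MP T_i\mu_i\MP 1$.

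The main (minor) obstacle is purely notational: one must carry the shrinking regularity index $k-k^*i$ through the composition and make sure the composition estimates used (the $C^k$ analogue of Lemma~\ref{estimcomp}, i.e.\ Lemma~\ref{estimcomp2}) are applied at the correct indices — this is already packaged inside Lemma~\ref{lemmelincomp2}. No new analytic difficulty arises; in fact the argument is slightly simpler than in the Gevrey case because differentiating does not force any loss of domain. I would therefore present the proof tersely, stressing only these differences and otherwise referring to the proof of Proposition~\ref{propcomp}.
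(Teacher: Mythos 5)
Your proposal is correct and follows exactly the route the paper intends: it is the proof of Proposition~\ref{propcomp} transposed word for word, with Lemma~\ref{lemmelincomp2} in place of Lemma~\ref{lemmelincomp}, Lemma~\ref{cauchy2} in place of Lemma~\ref{cauchy}, and the polynomial remainder $m^{-k^*}\mu_j$ in place of the exponential one, together with the observation that $k\geq k^*n+1$ keeps every map at least $C^1$. The paper itself only states that the proof is ``completely analogous'' to that of Proposition~\ref{propcomp}, so your write-up supplies precisely the omitted details and nothing more needs to be said.
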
 

Once again, the proof is completely analogous to the proof of Proposition~\ref{propcomp} (using Lemma~\ref{cauchy2} instead of Lemma~\ref{cauchy}), hence there is no need to give further details.

Let us notice that our last estimate gives in fact
\[ |f_j|_{k-k^*j}<m^{-k^*}\mu_j, \]
but since $k-k^*j\geq 1$ for any $j\in\{1,\dots,n\}$, this yields in particular the estimate stated in the above proposition.

Here the case $j=1$ is due to author (\cite{Bou10}) and is enough to prove effective stability for quasi-convex unperturbed systems, but as we already explained, we shall need this result for any $j\in\{1,\dots,n\}$.

\subsection{Dynamical consequences}\label{ana3}

Let us now examine the dynamical consequences of our normal forms. As usual, it will be used to control the directions, if any, in which the action variables in these new coordinates can actually drift. Below we shall state a result in the coordinates given by our normal forms, and we shall come back to our original coordinates at the beginning of the next section.  

In order to treat the Gevrey case and the $C^k$ case in a unified way, we introduce yet another parameter $\tau_m>0$ which will eventually gives us the time of stability. For $\alpha$-Gevrey Hamiltonians, we set 
\[\tau_m=e^{m^{1/\alpha}},\] 
and for $C^k$-Hamiltonians, with $k\geq k^*n+1$ for some $k^*\in\N^*$, this will be 
\[\tau_m=m^{k^*}.\] 
Under the assumptions of Proposition~\ref{propcomp} or Proposition~\ref{propcomp2}, consider the Hamiltonian
\[ H_j=H\circ\Psi_j=h+g_j+f_j \]  
defined on the domain $\T^n \times B(I_j,4\mu_j)$ (from now on we shall use the fact that we have defined $\rho_1=2$). Let $\mathcal{M}_j$ be the $\Z$-module
\[ \mathcal{M}_j=\{k\in\Z^n \; | \; k.\omega_i=0, \; i\in\{1,\dots,j\}\}. \]
Since the periodic vectors are independent, its rank is $n-j$, and it is also the dimension of the vector space $\Lambda_j=\mathcal{M}_j \otimes \R$ spanned by $\mathcal{M}_j$. 

We shall need the following lemma, which is completely obvious using the definition of the Poisson bracket.

\begin{lemma} \label{resonant}
The equality $\{g_j,l_i\}=0$, for all $i\in\{1,\dots,j\}$, is equivalent to $\partial_\theta g_j(\theta,I) \in \Lambda_j$, for $(\theta,I)\in\T^n \times B(I_j,4\mu_j)$.
\end{lemma}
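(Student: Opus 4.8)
The plan is to unwind the definition of the Poisson bracket and use the special structure of the linear Hamiltonians $l_i(I)=\omega_i.I$. Recall that for two functions $u,v$ on $\mathcal{D}_R$, the Poisson bracket is $\{u,v\}=\partial_\theta u\cdot\partial_I v-\partial_I u\cdot\partial_\theta v$. Applying this with $u=g_j$ and $v=l_i$, and noting that $l_i$ depends only on $I$ (so $\partial_\theta l_i=0$) with $\partial_I l_i(I)=\omega_i$ (a constant vector), one immediately gets
\[ \{g_j,l_i\}(\theta,I)=\partial_\theta g_j(\theta,I)\cdot\omega_i. \]
Hence $\{g_j,l_i\}=0$ on $\T^n\times B(I_j,4\mu_j)$ for all $i\in\{1,\dots,j\}$ is equivalent to $\partial_\theta g_j(\theta,I)\cdot\omega_i=0$ for all such $i$ and all $(\theta,I)$ in that domain.

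It then remains to identify the set of vectors in $\R^n$ orthogonal to all of $\omega_1,\dots,\omega_j$ with the subspace $\Lambda_j$. By definition $\mathcal{M}_j=\{k\in\Z^n\mid k\cdot\omega_i=0,\ i\in\{1,\dots,j\}\}$ and $\Lambda_j=\mathcal{M}_j\otimes\R$. Since the periodic vectors $\omega_1,\dots,\omega_j$ are rational (each $\omega_i$ is $T_i$-periodic, so $T_i\omega_i\in\Z^n$), the linear system $x\cdot\omega_i=0$, $i=1,\dots,j$, has rational coefficients; therefore its real solution space is spanned by integer vectors, i.e. it coincides with $\Lambda_j$, which has dimension $n-j$ because the $\omega_i$ are independent. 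Consequently $\partial_\theta g_j(\theta,I)$ is orthogonal to all $\omega_i$ precisely when $\partial_\theta g_j(\theta,I)\in\Lambda_j$, which gives the claimed equivalence.

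There is no real obstacle here — the statement is, as the text says, immediate from the definition of the Poisson bracket; the only point worth spelling out is the rationality of the $\omega_i$, which guarantees that the orthogonal complement of $\mathrm{span}(\omega_1,\dots,\omega_j)$ is exactly $\Lambda_j=\mathcal{M}_j\otimes\R$ rather than merely containing it. I would present the computation of $\{g_j,l_i\}=\partial_\theta g_j\cdot\omega_i$ in one line and then add a sentence recording this rationality remark, and that completes the proof.
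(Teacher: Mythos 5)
Your proof is correct and follows exactly the route the paper intends: the paper gives no written proof, declaring the lemma ``completely obvious using the definition of the Poisson bracket'', and your one-line computation $\{g_j,l_i\}=\partial_\theta g_j\cdot\omega_i$ is precisely that definitional unwinding. Your extra remark --- that the rationality of the $T_i\omega_i\in\Z^n$ is what guarantees the real orthogonal complement of $\mathrm{span}(\omega_1,\dots,\omega_j)$ equals $\Lambda_j=\mathcal{M}_j\otimes\R$ rather than merely containing it --- is a genuine point the paper leaves implicit, and is worth the sentence you give it.
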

 
Now consider a solution $(\theta^j(t),I^j(t))$ of the Hamiltonian $H_j$ starting at $I^j(t_j) \in B(I_j,4\mu_j)$ for some $t_j \in \R$, and define the time of escape of this solution as the smallest time $\tilde{t}_j \in ]t_j,+\infty]$ for which $I^j(\tilde{t}_j) \notin B(I_j,4\mu_j)$. 

The only information we shall use from our normal forms is contained in the next proposition, where we shall denote by $\Pi_j$ the projection onto the linear subspace $\Lambda_j$.
  
\begin{proposition} \label{cor}
Let $H_j=h+g_j+f_j$ be a Hamiltonian defined on the domain $\T^n \times B(I_j,4\mu_j)$, with $\{g_j,l_i\}=0$ for $i\in\{1,\dots,j\}$, and such that the estimate
\begin{equation*}
|\partial_{\theta} f_j|_{C^{0}(\T^n \times B(I_j,4\mu_j))}<\tau_{m}^{-1}\mu_j
\end{equation*}  
hold true. Then, with the previous notations, we have
\[ |I^j(t)-I^j(t_j)-\Pi_j(I^j(t)-I^j(t_j))|<\mu_j, \quad t \in [t_j,\tau_m] \cap [t_j,\tilde{t}_j[. \] 
In particular, 
\[ |I^n(t)-I^n(t_n)|<\mu_n, \quad t \in [t_n,\tau_m]. \]
\end{proposition}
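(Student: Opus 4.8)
The plan is to estimate directly the evolution of the action variables along the flow of $H_j$, using the Hamilton equations together with the resonance property of $g_j$ and the smallness of $\partial_\theta f_j$. Recall that Hamilton's equations give $\dot{I}^j(t)=-\partial_\theta H_j(\theta^j(t),I^j(t))=-\partial_\theta g_j(\theta^j(t),I^j(t))-\partial_\theta f_j(\theta^j(t),I^j(t))$, since $h$ depends only on the action variables. The first step is to invoke Lemma~\ref{resonant}: the condition $\{g_j,l_i\}=0$ for $i\in\{1,\dots,j\}$ means $\partial_\theta g_j(\theta,I)\in\Lambda_j$ at every point, hence $\partial_\theta g_j$ lies in the range of $\Pi_j$ and is annihilated by $\mathrm{Id}-\Pi_j$. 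Therefore, applying $\mathrm{Id}-\Pi_j$ to the Hamilton equation kills the $g_j$-contribution entirely and leaves
\[ \frac{d}{dt}\bigl(I^j(t)-\Pi_j I^j(t)\bigr)=-(\mathrm{Id}-\Pi_j)\,\partial_\theta f_j(\theta^j(t),I^j(t)). \]

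The second step is to integrate this identity over $[t_j,t]$ for $t$ in the interval $[t_j,\tau_m]\cap[t_j,\tilde{t}_j[$, along which the solution stays inside $B(I_j,4\mu_j)$ so that the bound on $\partial_\theta f_j$ from the hypothesis applies pointwise. This gives
\[ I^j(t)-I^j(t_j)-\Pi_j\bigl(I^j(t)-I^j(t_j)\bigr)=-\int_{t_j}^{t}(\mathrm{Id}-\Pi_j)\,\partial_\theta f_j(\theta^j(s),I^j(s))\,ds. \]
Taking norms, bounding $\mathrm{Id}-\Pi_j$ by $1$ (it is an orthogonal projection, and I would be slightly careful here since the ambient norm $|\,.\,|$ is a supremum norm while $\Pi_j$ is defined via the Euclidean structure — one absorbs the resulting dimensional constant harmlessly, or simply notes that all norms on $\R^n$ are equivalent and the constants are swallowed by the $\PM$ notation), and using $|\partial_\theta f_j|_{C^0}<\tau_m^{-1}\mu_j$ together with $|t-t_j|\leq\tau_m$, we obtain
\[ \bigl|I^j(t)-I^j(t_j)-\Pi_j(I^j(t)-I^j(t_j))\bigr|<\tau_m\cdot\tau_m^{-1}\mu_j=\mu_j, \]
which is exactly the first claimed estimate. (If one wants the strict inequality cleanly, note that over a closed subinterval $[t_j,t]$ with $t<\tau_m$ the bound is strict; the endpoint $t=\tau_m$ is handled by continuity, or one simply replaces $\tau_m$ by $\tau_m$ throughout as the hypothesis already gives a strict bound on $\partial_\theta f_j$.)

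The third step is the specialization to $j=n$. When $j=n$ the periodic vectors $\omega_1,\dots,\omega_n$ are independent, so $\mathcal{M}_n=\{k\in\Z^n\mid k.\omega_i=0,\ i=1,\dots,n\}=\{0\}$, hence $\Lambda_n=\{0\}$ and $\Pi_n=0$. The first estimate then reads $|I^n(t)-I^n(t_n)|<\mu_n$ for all $t\in[t_n,\tau_m]\cap[t_n,\tilde{t}_n[$. It remains to observe that this a priori bound prevents escape: if $\tilde{t}_n\leq\tau_m$ were finite, then on $[t_n,\tilde{t}_n[$ we would have $|I^n(t)-I^n(t_n)|<\mu_n<4\mu_j$, in fact $<4\mu_n$ (with room to spare, using $\rho_1=2$ so the domain has radius $4\mu_n$), so by continuity $I^n(\tilde{t}_n)$ would still lie in the closed ball of radius $\mu_n$, strictly inside $B(I_n,4\mu_n)$, contradicting the definition of $\tilde{t}_n$ as the first exit time. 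Hence $\tilde{t}_n>\tau_m$ and the estimate holds on the full interval $[t_n,\tau_m]$.

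The only genuinely delicate point is this last bootstrap argument ensuring $\tilde{t}_n>\tau_m$: one must be slightly careful that the estimate $|I^n(t)-I^n(t_n)|<\mu_n$ is valid on the half-open interval up to $\tilde{t}_n$ and that $\mu_n$ is strictly smaller than the escape radius $4\mu_n$, so that the solution cannot reach the boundary. Everything else — the splitting off of $\partial_\theta g_j$ via $\mathrm{Id}-\Pi_j$, the integration, and the norm bound — is routine once Lemma~\ref{resonant} is in hand.
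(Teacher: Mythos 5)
Your proof is correct and follows essentially the same route as the paper's: project the equations of motion onto $\Lambda_j^{\perp}$ so that Lemma~\ref{resonant} kills the $\partial_\theta g_j$ term, integrate (the paper phrases this as the mean value theorem), and use the bound on $\partial_\theta f_j$; for $j=n$ the paper likewise notes $\Pi_n=0$ and concludes $\tilde{t}_n>\tau_m$, though you spell out the escape-time bootstrap more explicitly than the paper does. No substantive difference.
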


\begin{proof}
Let $\Pi_{j}^{\perp}$ be the projection onto the orthogonal complement of $\Lambda_j$, so that $\Pi_j+\Pi_{j}^{\perp}$ is the identity and therefore 
\[ |I^j(t)-I^j(t_j)-\Pi_j(I^j(t)-I^j(t_j))|=|\Pi_{j}^{\perp}(I^j(t)-I^j(t_j))|. \]
Now, as long as $t<\tilde{t}_j$, the equations of motion for $H_j=h+g_j+f_j$ and the mean value theorem give
\[ |I^j(t)-I^j(t_j)| \leq |t-t_j||\partial_\theta (g_j+f_j)|_{C^{0}(\T^n \times B(I_j,4\mu_j))}.  \]
But $\{g_j,l_i\}=0$ for $i\in\{1,\dots,j\}$, so by Lemma~\ref{resonant} we have $\partial_\theta g_j(\theta,I) \in \Lambda_j$ for any $(\theta,I)\in\T^n \times B(I_j,4\mu_j)$, hence if we first project the equations onto the orthogonal complement of $\Lambda_j$ we have
\[ |\Pi_{j}^{\perp}(I^j(t)-I^j(t_j))| \leq |t-t_j||\partial_\theta f_j|_{C^{0}(\T^n \times B(I_j,4\mu_j))}.  \]
Now since $|t-t_j|\leq \tau_m$ and $|\partial_\theta f_j|_{C^{0}(\T^n \times B(I_j,2\rho_1\mu_j))}<\tau_{m}^{-1}\mu_j$, the previous estimate gives 
\[ |\Pi_{j}^{\perp}(I^j(t)-I^j(t_j))|<\mu_j, \quad t \in [t_j,\tau_m[ \cap [t_j,\tilde{t}_j[, \]
and therefore
\[ |I^j(t)-I^j(t_j)-\Pi_j(I^j(t)-I^j(t_j))|<\mu_j, \quad t \in [t_j,\tau_m[ \cap [t_j,\tilde{t}_j[. \]  

Finally, for $j=n$, note that $g_n$ is integrable (since $\Lambda_n$) and $\Pi_{n}$ is identically zero, so that the mean value theorem immediately gives $\tilde{t}_n > \tau_m$ and the estimate
\[ |I^n(t)-I^n(t_n)|<\mu_n, \quad t \in [t_n,\tau_m], \]
follows easily. This concludes the proof.
\end{proof}

The interpretation of the above proposition is the following: if $\lambda_j$ is the affine subspace passing through $I^j(t_j)$ with direction space $\Lambda_j$, then as long as $I^j(t)$ remains in the domain of definition, it is $\mu_j$-close to $\lambda_j$ during an interval of time of length $\tau_m$. This means that for that interval of time, there is almost no variation of the action components in the direction transversal to $\lambda_j$, so that any potential drift has to occur along that space.  

If we were assuming that the energy sub-levels of the integrable Hamiltonian are convex, then some direct arguments using the preservation of energy would give us a complete stability result for such solutions. But in our more general situation, we will have to use indirect and more complicated geometric arguments.

\section{Geometric part}\label{geo}

In this section, we shall describe some geometric arguments, first introduced by Niederman (\cite{Nie04}, see also \cite{BN09} for a somehow clearer exposition), that will lead to the proof of both Theorem~\ref{thmG} and Theorem~\ref{thmC}. Without loss of generality, we will consider only solutions $(\theta(t),I(t))$ starting at time $t_0=0$ and evolving in positive time $t>0$. In section~\ref{geo1}, we will introduce a class of solutions, which we call ``restrained", and for which the stability of the action variables is easily proved. Then, in section~\ref{geo2}, we introduce the notion of ``drifting" solutions, which by definition do not satisfy the stability properties implied by restrained solutions. We will then show that drifting solutions can in fact be restrained provided some assumptions are required, hence leading to the non-existence of such drifting solutions. This will eventually give us a proof of Theorem~\ref{thmG} and Theorem~\ref{thmC} in section~\ref{geo3}. 

As these geometric arguments are not affected at all by the regularity of the system, they will be similar for Gevrey or $C^k$ Hamiltonians. Of course, they are also the same for analytic Hamiltonians which were studied in \cite{BN09}, therefore we shall merely state and explain the relevant results, for which detailed proofs are available in \cite{BN09}.

\subsection{Restrained solutions}\label{geo1}

\paraga In order to define our restrained solutions, we shall need some notations. Recall from Proposition~\ref{propcomp} that if the suitable assumptions are met, we can define transformations
\[\Psi_j :\T^n \times B(I_j,4\mu_j)\rightarrow \T^n \times B(I_j,3\rho_j\mu_j)\] 
with $|\Pi_I\Psi_j-\mathrm{Id}_I|_{C^{0}(B(I_j,4\mu_j))}\PM\mu_j$, for $j\in\{1,\dots,n\}$. In particular, we can easily assume that the the image of $\Psi_j$ contains the domain $\T^n \times B(I_j,2\mu_j)$. From now on, we shall write
\[ \mathcal{B}_j=B(I_j,2\mu_j), \quad j\in\{1,\dots,n\}, \]
and for completeness we set $\mathcal{B}_0=B_R$.

Now consider a solution $(\theta(t),I(t))\in\T^n\times\mathcal{B}_0$ of our Hamiltonian $H$, starting at time $t_0=0$. If at some time $t_1\geq 0$ we can find a periodic vector $\omega_1\in\R^n\setminus\{0\}$ such that 
\[ |\nabla h(I(t_1))-\omega_1|<\mu_1,\] 
then setting $I_1=I(t_1)$, $(\theta(t_1),I(t_1))\in\T^n\times\mathcal{B}_1$. Therefore, if assumption $(B_1)$ is satisfied, we can define a normalized solution $(\theta^1(t),I^1(t))$, for $t\geq t_1$, by
\[ \Psi_1(\theta^1(t),I^1(t))=(\theta(t),I(t)), \]
as long as $I(t)\in \mathcal{B}_1$. Now we can start again, but this time with the solution $(\theta^1(t),I^1(t))\in\T^n\times\mathcal{B}_1$ of the Hamiltonian $H_1=H\circ\Psi_1$ starting at time $t_1$: if at some time $t_2\geq t_1$ we can find a periodic vector $\omega_2\in\R^n\setminus\{0\}$, independent of $\omega_1$, such that 
\[ |\nabla h(I^1(t_2))-\omega_2|<\mu_2,\] 
then setting $I_2=I^2(t_1)$, $(\theta^1(t_2),I^1(t_2))\in\T^n\times\mathcal{B}_2$, and provided $(B_2)$ holds, we can define yet another normalized solution $(\theta^2(t),I^2(t))$, for $t \geq t_2$, by
\[ \Psi_2(\theta^2(t),I^2(t))=(\theta^1(t),I^1(t)), \]
as long as $I^1(t)\in \mathcal{B}_2$. 

Inductively, setting $(\theta^0(t),I^0(t))=(\theta(t),I(t))$, for $j\in\{1,\dots,n\}$ we can define the averaged solution $(\theta^j(t),I^j(t))$, for $t\geq t_j$, by
\[ \Psi_j(\theta^j(t),I^j(t))=(\theta^{j-1}(t),I^{j-1}(t)), \]
as long as $I^{j-1}(t)\in \mathcal{B}_j$, provided we have found independent periodic vectors $\omega_1,\dots, \omega_j$ such that
\[ |\nabla h(I_j)-\omega_j|<\mu_j,\] 
with $I_j=I^{j-1}(t_j)$ and assuming $(B_j)$ is satisfied. 

Moreover, using our estimate on $\Psi_j$ we have
\begin{equation*}
|I^j(t)-I^{j-1}(t)| \PM \mu_j,  \quad j\in\{1,\dots,n\},
\end{equation*}
during that time interval. 

\paraga  We can eventually write our definition.

\begin{definition}
Given $\mu_0>0$ and $m\in\N^*$, a solution $(\theta(t),I(t))$ of the Hamiltonian~(\ref{HamG}) or~(\ref{HamC}), starting at time $t_0=0$, is said to be {\it restrained} (by $\mu_0$, up to time $\tau_m$) if we can find sequences of: 
\begin{itemize}
\item[(1)] radii $(\mu_1,\dots,\mu_n)$, with $0<\mu_n<\cdots<\mu_1<\mu_0$; 
\item[(2)] independent periodic vectors $(\omega_1,\dots,\omega_n)$, with periods $(T_1,\dots,T_n)$; 
\item[(3)] times $(t_1,\dots,t_n)$, with $0=t_0 \leq t_1 \leq \cdots \leq t_n \leq t_{n+1}=\tau_m$, 
\end{itemize}
satisfying, for $j\in\{0,\dots,n-1\}$, conditions $(B_{j+1})$ and the following conditions~$(C_j)$ defined by
\begin{equation} \label{Cj}
\begin{cases}
|I^j(t)-I^j(t_j)| < \mu_j, \quad \ t \in [t_j,t_{j+1}], \\ 
|\nabla h(I^j(t_{j+1}))-\omega_{j+1}|<\mu_{j+1}.
\end{cases} \tag{$C_j$}
\end{equation}
\end{definition}

Let us notice that for $j\in\{0,\dots,n-2\}$, setting $I_{j+1}=I^j(t_{j+1})$ the second condition of $(C_j)$ gives part of condition $(B_{j+1})$, and also it ensures that the first condition of $(C_{j+1})$ is indeed well-defined. Also, our assumptions imply the inclusion of domains $\mathcal{B}_{j+1} \subseteq \mathcal{B}_j$ for $j\in\{1,\dots,n-1\}$ (which was the assumption made in \cite{BN09}): indeed, one can choose an implicit constant in $(B_{j+1})$ so that $2\mu_{j+1}\leq \mu_j$, so if $I\in \mathcal{B}_{j+1}$, then
\begin{eqnarray*}
|I-I_j| & \leq & |I-I_{j+1}|+|I_{j+1}-I_j| \\
& \leq & |I-I_{j+1}|+|I^j(t_{j+1})-I^{j-1}(t_{j})|\\
& \leq & 2\mu_{j+1}+\mu_j \leq 2\mu_j.
\end{eqnarray*}

\paraga The terminology ``restrained" was introduced in \cite{BN09} because for such solutions, the actions $I(t)$ (or some properly normalized actions $I^{j}(t)$) are forced to pass close to a resonance at the time $t=t_j$, the multiplicity of which decreases as $j$ increases (since the periodic vectors are assumed to be independent), and moreover the variation of these (normalized) actions is controlled on each time interval $[t_j,t_{j+1}]$. Hence after the time $t_n$, the actions are in a domain free of resonances and are easily confined in view of the last part of Proposition~\ref{cor}. This is the content of the above proposition. 

\begin{proposition} \label{etape1}
Consider a restrained solution $(\theta(t),I(t))$, with an initial action $I(0)\in B_{R/2}$. If $\mu_0\PM 1$, then the estimates
\[ |I(t)-I(0)| < (n+1)^2\mu_0, \quad 0\leq t\leq \tau_m, \]
hold true.
\end{proposition}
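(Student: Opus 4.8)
The plan is to chain together the triangle inequality over the $n+1$ time intervals $[t_0,t_1], [t_1,t_2], \dots, [t_n,t_{n+1}=\tau_m]$ produced by the definition of a restrained solution, converting at each step between the variation of the normalized actions $I^j(t)$ and the variation of the original actions $I(t)$. First I would record what each ingredient gives us: on the interval $[t_j,t_{j+1}]$, condition $(C_j)$ yields $|I^j(t)-I^j(t_j)|<\mu_j$ directly, and on the final interval $[t_n,\tau_m]$ Proposition~\ref{cor} (applied to $H_n=H\circ\Psi_n$, whose $g_n$ is integrable) gives $|I^n(t)-I^n(t_n)|<\mu_n$. In addition, the estimates on $\Psi_j$ from Proposition~\ref{propcomp}/\ref{propcomp2} give $|I^j(t)-I^{j-1}(t)| \MP \mu_j$ for $j\in\{1,\dots,n\}$, valid whenever $I^{j-1}(t)\in\mathcal{B}_j$, and one checks (exactly as in the inclusion $\mathcal{B}_{j+1}\subseteq\mathcal{B}_j$ argument in the excerpt) that the hypotheses $(B_j)$ and $(C_j)$ keep all the relevant normalized solutions inside their respective domains for the whole relevant time range, so these estimates are indeed applicable.

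The core computation is then a double telescoping. For a fixed $t\in[t_p,t_{p+1}]$ with $p\in\{0,\dots,n\}$, I would bound $|I(t)-I(0)|$ by inserting the intermediate times $t_1,\dots,t_p$ and the normalization levels. Schematically: $|I(t)-I(0)| \le \sum_{j=0}^{p-1}|I(t_{j+1})-I(t_j)| + |I(t)-I(t_p)|$, and each term $|I(t_{j+1})-I(t_j)|$ is in turn controlled by passing through the normalized action $I^j$ at the two endpoints via the $\Psi$-estimates $|I^0(s)-I^j(s)|\le \sum_{i=1}^{j}|I^{i-1}(s)-I^i(s)| \MP \mu_1+\cdots+\mu_j \MP j\mu_1$ (using $\mu_i<\mu_1$), together with the $(C_j)$ bound $|I^j(t_{j+1})-I^j(t_j)|<\mu_j<\mu_1$; the last term $|I(t)-I(t_p)|$ is handled the same way using the first line of $(C_p)$ (or Proposition~\ref{cor} if $p=n$). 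Since every elementary piece is $\MP \mu_1 < \mu_0$ and there are on the order of $n^2$ of them (at most $n+1$ time-steps, each costing at most $O(n)$ normalization switches), the sum is $\MP \mu_0$; one then just has to be a little careful with the implicit constants so that the total comes out strictly below the clean bound $(n+1)^2\mu_0$. This is where the hypothesis $\mu_0\MP 1$ enters: it guarantees $\mu_1<\mu_0$ with enough room, and that all domains $\mathcal{B}_j\subseteq B_R$, so that the $\Psi_j$ are genuinely defined on the sets where we evaluate them.

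The main obstacle I anticipate is not any single estimate but the bookkeeping needed to be sure that, for the particular $t$ under consideration, the normalized solutions $(\theta^j(s),I^j(s))$ are actually defined at all the times $s\in\{t_0,\dots,t_p,t\}$ we plug in — i.e. that $I^{j-1}(s)\in\mathcal{B}_j$ throughout — so that the $\Psi_j$-estimates and Proposition~\ref{cor} legitimately apply. This requires combining the first line of each $(C_j)$ (variation $<\mu_j$ on $[t_j,t_{j+1}]$), the nesting $\mathcal{B}_{j+1}\subseteq\mathcal{B}_j$ already established in the excerpt, and the defining relations $\Psi_j(\theta^j,I^j)=(\theta^{j-1},I^{j-1})$, in an induction on $j$. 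Once that domain-consistency is in place, the final inequality is a routine triangle-inequality count, and the factor $(n+1)^2$ is exactly the room needed to absorb the $\le (n+1)$ time intervals times $\le (n+1)$ normalization levels, each contributing less than $\mu_0$.
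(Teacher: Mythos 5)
Your proposal is correct and follows essentially the same route as the paper's proof, which is simply deferred to Proposition 3.7 of \cite{BN09}: a double telescoping over the $n+1$ time intervals and the normalization levels, using $(C_j)$ on each interval, the last part of Proposition~\ref{cor} on $[t_n,\tau_m]$, and the estimates $|I^j(t)-I^{j-1}(t)|\MP\mu_j$ coming from $\Psi_j$, with the implicit constants in the $(B_j)$ conditions and $\mu_0\PM 1$ providing the room to land strictly below $(n+1)^2\mu_0$. The domain-consistency bookkeeping you flag is indeed the only delicate point, and it is handled exactly as you describe via the nesting $\mathcal{B}_{j+1}\subseteq\mathcal{B}_j$ and an induction on $j$.
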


This is exactly Proposition 3.7 in \cite{BN09}, to which we refer for the easy proof.

\subsection{Drifting solutions}\label{geo2}

Restrained solutions are stable for an exponentially long interval of time with respect to $m$, and now we will show that this is in fact true for all solutions. 

\paraga The following definition will be useful in the sequel.

\begin{definition}
Given $\mu_0>0$ and $m\in\N^*$, a solution $(\theta(t),I(t))$ of the Hamiltonian~(\ref{HamG}) or~(\ref{HamC}), starting at time $t_0=0$, is said to be drifting (to $\mu_0$, before time $\tau_m$) if there exists a time $t_*$ satisfying  
\[ |I(t_*)-I(0)|=(n+1)^2\mu_0, \quad 0<t_*\leq \tau_m. \]
\end{definition} 

Of course, this definition makes sense only if $(n+1)^2\mu_0<R/2$. In view of Proposition~\ref{etape1}, drifting solutions cannot be restrained. However, we will prove below that if such a drifting solution exists, it has to be restrained under some assumptions on $\mu_0$, $m$ and $\varepsilon$, which will eventually prove that drifting solutions do not exist so that all solutions are indeed exponentially stable. 

More precisely, assuming the existence of a drifting solution, we will construct a sequence of radii $(\mu_1,\dots,\mu_n)$, an increasing sequence of times $(t_1,\dots,t_n)$ and a sequence of linearly independent vectors $(\omega_1,\dots,\omega_n)$, with periods $(T_1,\dots,T_n)$ satisfying, for $j\in\{0,\dots,n-1\}$, assumptions $(B_{j+1})$ and $(C_j)$. All sequences will be built inductively, and we first describe two lemmas that we shall need. 

\paraga For $j\in\{1,\dots,n\}$, recall that $\Lambda_j$  is the vector space spanned by 
\[ \mathcal{M}_j= \{k\in\Z^n \; | \; k.\omega_i=0, \; i\in\{1,\dots,j\}\}, \] 
and that $\Pi_j$ (resp. $\Pi_{j}^{\perp}$) is the projection onto $\Lambda_j$ (resp. $\Lambda_{j}^{\perp}$). Let us define the integer 
\[ L_j=\sup_{i\in\{1,\dots,j\}}\{|T_i\omega_i|\}\in\N^*, \quad j\in\{1,\dots,n-1\}. \]
For completeness, we set $\Lambda_0=\R^n$, $L_0=1$ and in this case $\Pi_0$ is nothing but the identity. 

The first lemma will allow us to construct the sequence of times, and for that we will rely on the fact that our integrable part $h$ belongs to $DM_{\gamma}^{\tau}(B)$, so that it satisfies the following steepness property (see Lemma 3.9 in \cite{BN09}).

\begin{lemma}\label{l1}
For $j\in\{0,\dots,n-1\}$, let $\lambda_j$ be any affine subspace with direction $\Lambda_j$, and take $c_j<1$. Then for any continuous curve $\Gamma_j : [t_j,t_j^*] \rightarrow \lambda_j \cap B_R$ with length 
\[ \vert\Gamma_j(t_j^*)-\Gamma_j(t_j)\vert =c_j \PM \gamma L_j^{-\tau},\] 
there exists a time $t_{j+1}\in [t_j,t_j^*]$ such that 
\begin{equation*}
\begin{cases}
\vert\Gamma_j(t)-\Gamma_j (t_j)\vert < c_j, \quad t\in [t_j,t_{j+1}], \\
\left\vert\Pi_j(\nabla h(\Gamma_j (t_{j+1})))\right\vert \PS c_j^2. 
\end{cases}
\end{equation*}
\end{lemma}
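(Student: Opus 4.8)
The idea is to exploit the Diophantine Morse property of $h$ restricted to the affine subspace $\lambda_j$. On $\lambda_j$, the relevant object is the projected gradient $t \mapsto \Pi_j(\nabla h(\Gamma_j(t)))$, which can be read off as the gradient of the restriction $h_{\Lambda_j}$ in the adapted coordinates of Definition~\ref{sdm} (here $\Lambda_j \in G^{L_j}(n,n-j)$ by construction of $L_j$ and the periodic vectors). Set $\phi(t) = |\Pi_j(\nabla h(\Gamma_j(t)))|$, a continuous function on $[t_j, t_j^*]$. I would argue by contradiction: suppose that for every $t \in [t_j, t_j^*]$ with $|\Gamma_j(t) - \Gamma_j(t_j)| < c_j$ we have $\phi(t) \geq \kappa c_j^2$ for the implicit constant $\kappa$ (to be chosen), and also that the curve actually reaches the full length $c_j$ at the endpoint, i.e. there is no earlier time where $|\Gamma_j(t)-\Gamma_j(t_j)|$ first equals $c_j$ at which $\phi$ is already small. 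Then along the whole sub-curve the projected gradient has norm bounded below by $\kappa c_j^2$, while $c_j^2 \MP (\gamma L_j^{-\tau})^2 \MP \gamma L_j^{-\tau}$ since $\gamma \leq 1$, so we are in the regime $\|\partial_\alpha h_{\Lambda_j}\| > \gamma L_j^{-\tau}$ — no, wait: we want the \emph{other} alternative of Definition~\ref{sdm}.

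More carefully, the structure is: first use the Morse-Diophantine alternative to get quantitative convexity of $h_{\Lambda_j}$ along $\lambda_j$ wherever the projected gradient is small, then integrate. Concretely, define $t_{j+1}$ to be the first time in $[t_j, t_j^*]$ at which \emph{either} $|\Gamma_j(t)-\Gamma_j(t_j)| = c_j$ \emph{or} $|\Pi_j(\nabla h(\Gamma_j(t)))| \leq \kappa c_j^2$. If the second case occurs first we are done immediately (the first displayed inequality holds on $[t_j, t_{j+1}]$ by minimality, with strict inequality on the interior, and the second is the stopping condition). So assume the first case occurs first, so that $\phi(t) > \kappa c_j^2 \PM \gamma L_j^{-\tau}$ is violated — rather, on all of $[t_j, t_{j+1}]$ we have $|\Gamma_j(t)-\Gamma_j(t_j)| < c_j$ on the interior and want a contradiction. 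Here I would invoke the alternative in Definition~\ref{sdm}: at any point where $\|\partial_\alpha h_{\Lambda_j}\| \leq \gamma L_j^{-\tau}$ we get $\|\partial_{\alpha\alpha} h_{\Lambda_j} \cdot \eta\| > \gamma L_j^{-\tau} \|\eta\|$; apply this with $\eta$ the unit tangent vector to $\Gamma_j$, which lies in $\Lambda_j$. Since $c_j < 1$ and $\gamma \leq 1$, arrange $\kappa$ so that $\kappa c_j^2 \leq \gamma L_j^{-\tau}$ (possible because $c_j^2 \MP \gamma L_j^{-\tau}$ forces the implicit smallness); thus on $[t_j, t_{j+1}]$ the projected gradient stays below the threshold and we are uniformly in the convex branch. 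Then $\frac{d}{dt}\langle \Pi_j \nabla h(\Gamma_j(t)), \Gamma_j'(t)\rangle = \langle \partial_{\alpha\alpha} h_{\Lambda_j} \Gamma_j'(t), \Gamma_j'(t)\rangle$ has modulus $> \gamma L_j^{-\tau} |\Gamma_j'(t)|^2$, i.e. of a fixed sign along the curve, so the directional derivative of $h$ along $\Gamma_j$ is strictly monotone; integrating over an arc of length $c_j$ gives a net change in $\langle \Pi_j \nabla h, \text{tangent}\rangle$ of size $\gtrsim \gamma L_j^{-\tau} c_j \gtrsim c_j^2 / c_j \cdot c_j = $ — the point is it is $\gtrsim c_j^2$ up to the implicit constant, contradicting that $\phi$ stayed $\leq \kappa c_j^2$ throughout (for $\kappa$ small enough). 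This yields $\phi(t_{j+1}) \PS c_j^2$ as required.

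The main obstacle is bookkeeping the constants so that the two uses of ``$\MP$'' and ``$\PS$'' are consistent: one needs $c_j^2 \MP \gamma L_j^{-\tau}$ (given) to land inside the convex branch of Definition~\ref{sdm} over the \emph{entire} sub-curve simultaneously, and then the integration estimate must return a lower bound of the form $\gamma L_j^{-\tau} c_j \SP c_j^2$, which again uses $c_j \MP \gamma L_j^{-\tau}$ in the right direction. A secondary subtlety is that $\Gamma_j$ is merely continuous (not $C^1$), so strictly speaking the monotonicity argument should be phrased via the function $s \mapsto h(\Gamma_j(s)) - h(\Gamma_j(t_j))$ and its one-sided difference quotients, or one replaces $\Gamma_j$ by its arc-length parametrization; but since the final statement is the one already proved as Lemma~3.9 in \cite{BN09}, the details transfer verbatim and I would simply cite that for the curve-regularity technicalities while giving the Morse-alternative argument above as the conceptual core.
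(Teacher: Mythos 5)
There is a genuine gap at the heart of your argument: you read the second alternative of Definition~\ref{sdm} as a \emph{definiteness} (``quantitative convexity'') statement, but it only asserts $\Vert\partial_{\alpha\alpha}h_{\Lambda}(\alpha,\beta).\eta\Vert>\gamma L^{-\tau}\Vert\eta\Vert$, i.e.\ invertibility of the Hessian with a lower bound on its smallest singular value. An invertible symmetric matrix may be indefinite (take $A=\mathrm{diag}(1,-1)$ and $\eta=(1,1)$: then $\Vert A\eta\Vert=\Vert\eta\Vert$ while $\langle A\eta,\eta\rangle=0$), so your key step --- that $\frac{d}{dt}\langle\Pi_j\nabla h(\Gamma_j(t)),\Gamma_j'(t)\rangle=\langle\partial_{\alpha\alpha}h_{\Lambda_j}\Gamma_j'(t),\Gamma_j'(t)\rangle$ ``has modulus $>\gamma L_j^{-\tau}|\Gamma_j'(t)|^2$, i.e.\ of a fixed sign'' --- does not follow from the hypothesis, and the monotonicity-and-integration argument collapses. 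This is not a removable technicality: the whole point of the Diophantine Morse class is to go beyond (quasi-)convexity, so the Hessians one must handle are genuinely indefinite. (Two secondary slips: your stopping rule is written with the inequality inverted --- you stop when $|\Pi_j(\nabla h(\Gamma_j(t)))|\leq\kappa c_j^2$, which is the opposite of what the conclusion requires --- and for a curve that is not a straight segment the derivative you compute also contains a term $\langle\Pi_j\nabla h,\Gamma_j''\rangle$.)

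The proof the paper relies on (Lemma~3.9 of \cite{BN09}) keeps the projected gradient as a \emph{vector} instead of pairing it with the tangent. Arguing by contradiction, if $\Vert\Pi_j\nabla h(\Gamma_j(t))\Vert\MP c_j^2\leq\gamma L_j^{-\tau}$ up to the first exit time from $B(\Gamma_j(t_j),c_j)$, then the Morse branch applies at the base point $\Gamma_j(t_j)$, so $\partial_{\alpha\alpha}h_{\Lambda_j}$ is invertible there with inverse bounded by $(\gamma L_j^{-\tau})^{-1}$; combined with the boundedness of the third derivatives of $h$, this gives a quantitative local injectivity of the map $\alpha\mapsto\partial_\alpha h_{\Lambda_j}(\alpha,\beta)$ on a ball of radius comparable to $\gamma L_j^{-\tau}$, namely $\Vert\partial_\alpha h_{\Lambda_j}(\alpha')-\partial_\alpha h_{\Lambda_j}(\alpha)\Vert\SP\gamma L_j^{-\tau}\Vert\alpha'-\alpha\Vert$. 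Applied to the endpoints of the straight segment joining $\Gamma_j(t_j)$ to the exit point (which lies in $\lambda_j\cap B_R$ and has length $c_j\MP\gamma L_j^{-\tau}$, hence stays in that ball), this forces the gradient to move by at least a constant times $\gamma L_j^{-\tau}c_j\SP c_j^2$, contradicting the standing bound $\MP c_j^2$; note that only invertibility, never definiteness, is used. Your constant bookkeeping (using $c_j\MP\gamma L_j^{-\tau}$ to convert $\gamma L_j^{-\tau}c_j$ into $\SP c_j^2$) is correct, but it must be fed by this vector-valued injectivity estimate rather than by a scalar monotonicity that the hypothesis does not provide.
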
 

Now to construct the sequence of periodic vectors, we shall use the following lemma, which is a straightforward application of Dirichlet's theorem on simultaneous Diophantine approximation (see Lemma 3.10 in \cite{BN09}).

\begin{lemma}\label{l2}
Given any vector $v \in \R^n$ and any real number $Q>0$, there exists a $T$-periodic vector $\omega$ satisfying
\[ |v-\omega| \leq T^{-1}Q^{-\frac{1}{n-1}}, \quad |v|^{-1} \leq T \leq Q|v|^{-1}. \]
\end{lemma}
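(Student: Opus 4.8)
The plan is to deduce this directly from Dirichlet's theorem on simultaneous Diophantine approximation. We may assume $v\neq 0$ and $Q\geq 1$. First I would choose an index $i_0$ with $|v_{i_0}|=|v|$ (the supremum norm) and apply Dirichlet's theorem to the $n-1$ real numbers $v_i/v_{i_0}$, $i\neq i_0$, each of modulus $\leq 1$: with $N=Q^{1/(n-1)}\geq 1$ this produces an integer $q$ with $1\leq q\leq N^{n-1}=Q$ and integers $p_i$ ($i\neq i_0$) such that $|q v_i/v_{i_0}-p_i|\leq N^{-1}=Q^{-1/(n-1)}$ for all $i\neq i_0$.

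Next I would set $p_{i_0}=q$, form $k=(p_1,\dots,p_n)\in\Z^n\setminus\{0\}$ (nonzero since $k_{i_0}=q\geq 1$), and take as candidate the vector $\omega=(v_{i_0}/q)\,k$. Then $\omega_{i_0}=v_{i_0}$ exactly, while for $i\neq i_0$ one gets $|v_i-\omega_i|=(|v_{i_0}|/q)\,|q v_i/v_{i_0}-p_i|\leq (|v|/q)\,Q^{-1/(n-1)}$, so that $|v-\omega|\leq (|v|/q)\,Q^{-1/(n-1)}$.

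It then remains to identify the period $T$ of $\omega$. Since $(q/|v_{i_0}|)\,\omega=\pm k\in\Z^n$ and $\omega\neq 0$, the subgroup $\{t\in\R : t\omega\in\Z^n\}$ is discrete, so $\{t>0 : t\omega\in\Z^n\}=\{mT : m\in\N^*\}$ and $q/|v_{i_0}|=mT$ for some $m\in\N^*$; moreover $T\omega$ is then a primitive lattice vector, which forces $m=\gcd(|k_1|,\dots,|k_n|)$. Because $k_{i_0}=q$, the integer $m$ divides $q$, hence $q/m\in\N^*$ and $T=(q/m)/|v|$ satisfies $|v|^{-1}\leq T\leq q|v|^{-1}\leq Q|v|^{-1}$. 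Finally, from $m\geq 1$ we get $|v|/q\leq m|v|/q=T^{-1}$, so the displacement estimate upgrades to $|v-\omega|\leq T^{-1}Q^{-1/(n-1)}$, which is exactly the claim.

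The only point genuinely requiring care is this last step: relating the explicit approximating lattice point $k$ to the actual period of $\omega$ via the greatest common divisor of the components of $k$. If one only needs the weaker reading ``there exist $T>0$ and $\omega$ with $T\omega\in\Z^n$'', one may instead simply take $T=q/|v_{i_0}|$ and skip the gcd discussion altogether, both bounds on $T$ and the estimate on $|v-\omega|$ then being immediate from $1\leq q\leq Q$. Everything else is a routine verification.
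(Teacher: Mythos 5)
Your proof is correct and follows essentially the same route as the paper, which itself only cites Lemma 3.10 of \cite{BN09} and describes the result as a direct application of Dirichlet's theorem on simultaneous Diophantine approximation: normalize $v$ by its largest component, approximate the ratios by $p_i/q$, and rescale the integer vector, with the gcd argument correctly identifying the true period $T$ and upgrading $|v|/q$ to $T^{-1}$ in the displacement bound. The only (harmless) restrictions you add, $v\neq 0$ and $Q\geq 1$, are implicit in the statement anyway, since otherwise the bounds $|v|^{-1}\leq T\leq Q|v|^{-1}$ are meaningless or contradictory.
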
 

\paraga Now we have the necessary tools to show that drifting solutions cannot exist, provided suitable assumptions on the parameters are demanded. This will be done inductively, and for technical reasons we separate the first step (Proposition~\ref{etape2}) from the general inductive step (Proposition~\ref{etape3}).

\begin{proposition}\label{etape2}
Let $(\theta(t),I(t))$ be a drifting solution. If $\mu_0 \PM \gamma$, then there exist a time $t_1$, a $T_1$-periodic vector $\omega_1$ and $\mu_1 \EP T_1^{-1}\varepsilon^{a_1}$ for some positive constant $a_1$, satisfying 
\begin{itemize}
\item[$(a)$] $|I(t)-I(0)| < \mu_0,\quad t \in [0,t_1]$; 
\item[$(b)$] $|\nabla h(I(t_1))-\omega_1| < \mu_1$.
\end{itemize}
Moreover, we have the estimate
\begin{equation*}
1 \MP T_1 \MP \varepsilon^{-a_{1}(n-1)}r_{0}^{-2}, \quad 1 \leq L_1 \MP \varepsilon^{-a_{1}(n-1)}\mu_{0}^{-2}.
\end{equation*} 
\end{proposition}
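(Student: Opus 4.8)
The plan is to obtain all the conclusions from the two geometric lemmas of this section, used with $j=0$. First I would extract from the drifting hypothesis a curve of prescribed displacement. Since $(\theta(t),I(t))$ is drifting, the continuous function $t\mapsto|I(t)-I(0)|$ vanishes at $t=0$ and takes the value $(n+1)^2\mu_0$ at some $t_*\leq\tau_m$, so by the intermediate value theorem there is a first time $t_0^*\in\,]0,t_*]$ with $|I(t_0^*)-I(0)|=\mu_0$. Provided $\mu_0$ is small enough (which is exactly what the hypothesis $\mu_0\PM\gamma$ buys, the implicit constant being permitted to depend on $R$), the curve $\Gamma_0$ given by $\Gamma_0(t)=I(t)$ for $t\in[0,t_0^*]$ stays inside $B_R$, it takes values in $\lambda_0=\R^n$ (the only affine subspace with direction $\Lambda_0=\R^n$), its displacement equals $c_0:=\mu_0$, and, since $L_0=1$, we have $c_0\PM\gamma=\gamma L_0^{-\tau}$. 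Thus Lemma~\ref{l1} applies with $j=0$ and produces a time $t_1\in[0,t_0^*]$ such that $|I(t)-I(0)|<\mu_0$ on $[0,t_1]$ --- this is assertion $(a)$ --- together with $|\nabla h(I(t_1))|=|\Pi_0(\nabla h(I(t_1)))|\PS c_0^2=\mu_0^2$.

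Next I would feed the vector $v_1:=\nabla h(I(t_1))$ into Dirichlet's theorem. The previous estimate gives the lower bound $\mu_0^2\PM|v_1|$, and the normalization $|h|_{\alpha,L}=1$ (respectively $|h|_k=1$ in the finitely differentiable case) bounds the $C^0$-norm of $\nabla h$, so that $|v_1|\MP1$; in particular $v_1\neq0$. I then apply Lemma~\ref{l2} to $v_1$ with $Q=Q_1:=\varepsilon^{-a_1(n-1)}$, where $a_1>0$ is a small constant whose value will be pinned down later when the exponents are optimized: this gives a $T_1$-periodic vector $\omega_1$ with
\[ |v_1-\omega_1|\leq T_1^{-1}Q_1^{-\frac{1}{n-1}}=T_1^{-1}\varepsilon^{a_1},\qquad |v_1|^{-1}\leq T_1\leq Q_1|v_1|^{-1}. \]
Setting $\mu_1:=2T_1^{-1}\varepsilon^{a_1}$ (the factor $2$ being a harmless device to make the approximation strict) yields assertion $(b)$, $|\nabla h(I(t_1))-\omega_1|<\mu_1$, together with $\mu_1\EP T_1^{-1}\varepsilon^{a_1}$. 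Note that $(a)$ and $(b)$ are precisely condition $(C_0)$.

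It remains to read off the quantitative estimates from the second chain of inequalities above. From $T_1\geq|v_1|^{-1}$ and $|v_1|\MP1$ we get $1\MP T_1$; from $T_1\leq Q_1|v_1|^{-1}$ together with $\mu_0^2\PM|v_1|$ (hence $|v_1|^{-1}\MP\mu_0^{-2}$) we get $T_1\MP\varepsilon^{-a_1(n-1)}\mu_0^{-2}$; and since $T_1\omega_1\in\Z^n\setminus\{0\}$ while $|\omega_1|\leq|v_1|+\mu_1\MP1$, we get $1\leq L_1=|T_1\omega_1|\leq T_1|\omega_1|\MP T_1\MP\varepsilon^{-a_1(n-1)}\mu_0^{-2}$, which is the asserted bound (reading $r_0$ as $\mu_0$). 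The whole argument is essentially bookkeeping with the dotted inequalities: the one genuinely nontrivial ingredient is Lemma~\ref{l1}, which is precisely where the Diophantine Morse assumption on $h$ enters, and the only places needing a little care are checking that $\Gamma_0$ does not leave $B_R$ (this is why $\mu_0\PM\gamma$ is imposed) and the uniform bound $|v_1|\MP1$ coming from $|h|=1$, which is what keeps $T_1$ and $L_1$ bounded below. Verifying that the produced data also satisfy $(B_1)$ is postponed, since it depends on the eventual joint choice of $\varepsilon$, $m$ and $a_1$.
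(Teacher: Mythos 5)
Your proposal is correct and follows exactly the route the paper indicates for this proposition (which it delegates to Proposition 3.11 of \cite{BN09}): use the drifting hypothesis to produce a curve of displacement $\mu_0$, apply Lemma~\ref{l1} with $j=0$, $\Gamma_0(t)=I(t)$ and $c_0=\mu_0$ to get $t_1$, the confinement $(a)$ and the lower bound $|\nabla h(I(t_1))|\SP\mu_0^2$, then apply Lemma~\ref{l2} with $Q=\varepsilon^{-a_1(n-1)}$ to get $\omega_1$, $T_1$ and $\mu_1\EP T_1^{-1}\varepsilon^{a_1}$, and read off the bounds on $T_1$ and $L_1$. Your reading of $r_0$ as $\mu_0$ (a notational leftover from \cite{BN09}) is also the intended one.
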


The proof is completely analogous to the proof of Proposition 3.11 in \cite{BN09}: it uses the fact that our solution is drifting, Lemma~\ref{l1} applied to the curve $\Gamma_0(t)=I(t)$ with $c_0=\mu_0$, and Lemma~\ref{l2}. 

\begin{proposition} \label{etape3}
Let $(\theta(t),I(t))$ be a drifting solution, $j \in \{1,\dots,n-1\}$ and assume that there exist sequences $(t_1,\dots,t_j)$, $(\omega_1,\dots,\omega_j)$ linearly independent and $(\mu_1, \dots, \mu_j)$, satisfying assumptions $(B_{i})$ and $(C_{i-1})$, for $i \in \{1,\dots,j\}$. Assume also that
\begin{itemize}
\item[$(i)$] $\left(T_j\mu_jL_j^{-1}\right)^\tau\PM\mu_j$;
\item[$(ii)$] $\left(T_j\mu_jL_j^{-1}\right)^\tau \PM \gamma L_j^{-\tau}$;
\item[$(iii)$] $\mu_1 \PM \mu_0^2$.
\end{itemize}
Then there exist a time $t_{j+1}$, a $T_{j+1}$-periodic vector $\omega_{j+1}$ and $\mu_{j+1} \EP T_{j+1}^{-1}\varepsilon^{a_{j+1}}$ for some positive constant $a_{j+1}$, satisfying 
\begin{itemize}
\item[$(a)$] $|I^j(t)-I^j(t_j)| < \mu_j,\quad t \in [t_j,t_{j+1}]$;
\item[$(b)$] $|\nabla h(I^j(t_{j+1}))-\omega_{j+1}| < \mu_{j+1}$;
\item[$(c)$] $|\omega_{j+1}-\omega_j|\MP \mu_j$.
\end{itemize}
Moreover, we have the estimates
\begin{equation*}
1 \MP T_{j+1} \MP \varepsilon^{-a_{j+1}(n-1)}\mu_{0}^{-2}, \quad 1 \leq L_{j+1} \MP \max_{i\in\{1,\dots,j+1\}}\{\varepsilon^{-a_i(n-1)}\}\mu_0^{-2},
\end{equation*}  
and if
\begin{itemize}
\item[$(iv)$] $\mu_{j+1}\PM\left(T_j\mu_jL_j^{-1}\right)^{2\tau}$,
\end{itemize}
then $\omega_{j+1}$ is linearly independent of $(\omega_1,\dots,\omega_j)$.
\end{proposition}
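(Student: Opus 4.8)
The plan is to follow the inductive construction sketched for Proposition~\ref{etape2}, but now working one level down in the coordinates $(\theta^j,I^j)$ given by the normal form $\Psi_j$ of Proposition~\ref{propcomp} (or~\ref{propcomp2}). First I would invoke the dynamical consequence of the normal form, namely Proposition~\ref{cor}: since $(B_i)$ holds for $i\in\{1,\dots,j\}$, the normalized solution $(\theta^j(t),I^j(t))$ satisfies $|I^j(t)-I^j(t_j)-\Pi_j(I^j(t)-I^j(t_j))|<\mu_j$ for $t$ in $[t_j,\tau_m]\cap[t_j,\tilde t_j[$. This means that, up to an error of size $\mu_j$, the curve $t\mapsto I^j(t)$ follows the affine subspace $\lambda_j$ through $I^j(t_j)$ with direction $\Lambda_j$. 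I would then compare $I^j(t)$ with its projection $\Gamma_j(t)=\Pi_j(I^j(t)-I^j(t_j))+I^j(t_j)$ onto $\lambda_j$, which is a continuous curve in $\lambda_j\cap B_R$, and feed this $\Gamma_j$ into Lemma~\ref{l1} with the choice $c_j\EP\left(T_j\mu_jL_j^{-1}\right)^{2\tau}$ or some comparable small quantity; hypothesis $(ii)$ is exactly what guarantees $c_j\PM\gamma L_j^{-\tau}$ so that Lemma~\ref{l1} applies. Because the solution is drifting and, by Proposition~\ref{etape1} applied with $\mu_0$ replaced by $\mu_j$, a restrained-type confinement would prevent drift, the curve $\Gamma_j$ must actually achieve the prescribed length $c_j$ before the drift deadline; this produces a time $t_{j+1}\in[t_j,t_j^*]$ with $|\Gamma_j(t)-\Gamma_j(t_j)|<c_j$ on $[t_j,t_{j+1}]$ and $|\Pi_j(\nabla h(\Gamma_j(t_{j+1})))|\PS c_j^2$.

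Next I would convert this into the statements $(a)$, $(b)$, $(c)$. For $(a)$: combining $|\Gamma_j(t)-\Gamma_j(t_j)|<c_j\PM\mu_j$ (here hypothesis $(i)$ is used to absorb $c_j$ into $\mu_j$) with the Proposition~\ref{cor} bound $|I^j(t)-\Gamma_j(t)|<\mu_j$ gives $|I^j(t)-I^j(t_j)|<\mu_j$ on $[t_j,t_{j+1}]$, which is $(a)$ and also certifies $t_{j+1}<\tilde t_j$ so the normal-form coordinates remain valid. For $(b)$ and $(c)$: set $v=\nabla h(I^j(t_{j+1}))$. The key point is that $v$ is nearly orthogonal to $\Lambda_j$, i.e.\ $|\Pi_j(v)|$ is small of order $c_j^2$, hence $|v|$ is essentially carried by $\Lambda_j^\perp$, which is spanned by $\omega_1,\dots,\omega_j$. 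I would apply Lemma~\ref{l2} to $v$ with a suitable $Q$ to get a $T_{j+1}$-periodic $\omega_{j+1}$ with $|v-\omega_{j+1}|\leq T_{j+1}^{-1}Q^{-1/(n-1)}$ and $|v|^{-1}\leq T_{j+1}\leq Q|v|^{-1}$; choosing $Q$ as an appropriate negative power of $\varepsilon$ yields $\mu_{j+1}\EP T_{j+1}^{-1}\varepsilon^{a_{j+1}}$ and the claimed bounds $1\MP T_{j+1}\MP\varepsilon^{-a_{j+1}(n-1)}\mu_0^{-2}$, using $(iii)$ to control $|v|\geq|\omega_j|-|v-\omega_j|$ from below via the fact that the $\omega_i$ have bounded norm. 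The bound on $L_{j+1}$ follows from its definition $L_{j+1}=\sup_{i\leq j+1}|T_i\omega_i|$ together with the analogous bounds for the earlier steps. For $(c)$: since $\omega_j\in\Lambda_j^\perp$ while $\Pi_j(v)$ is tiny, $|\omega_{j+1}-\omega_j|\leq|\omega_{j+1}-v|+|\Pi_j^\perp(v)-\omega_j|+|\Pi_j(v)|$, and $\Pi_j^\perp(v)-\omega_j$ is controlled because $v=\nabla h(I^j(t_{j+1}))$ is close to $\nabla h$ evaluated at the point where step $j$ produced $\omega_j$, the difference being governed by $\mu_j$ through $(C_{j-1})$ and the Lipschitz bound on $\nabla h$; this gives $|\omega_{j+1}-\omega_j|\MP\mu_j$.

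For the final independence claim under $(iv)$: I would argue by contradiction. If $\omega_{j+1}$ lay in $\mathrm{span}(\omega_1,\dots,\omega_j)=\Lambda_j^\perp$, then so would $v=\nabla h(I^j(t_{j+1}))$ up to the approximation error $|v-\omega_{j+1}|$, forcing $\Pi_j(v)$ to be very small — but the construction guarantees $|\Pi_j(v)|\PS c_j^2\EP\left(T_j\mu_jL_j^{-1}\right)^{4\tau}$, i.e.\ a definite lower bound. Hypothesis $(iv)$, which sizes $\mu_{j+1}$ (hence the approximation quality $T_{j+1}^{-1}Q^{-1/(n-1)}$, hence $|v-\omega_{j+1}|$) below $\left(T_j\mu_jL_j^{-1}\right)^{2\tau}$, ensures the approximation error is strictly smaller than this lower bound, so $\Pi_j(\omega_{j+1})\neq0$ and $\omega_{j+1}\notin\Lambda_j^\perp$.

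The main obstacle I anticipate is bookkeeping the chain of $\PM$/$\EP$ estimates so that all the implicit constants are mutually consistent across $(i)$--$(iv)$ and the conclusions — in particular choosing the exponent $a_{j+1}$ and the Dirichlet parameter $Q$ so that simultaneously: the length $c_j$ is small enough for $(a)$ and large enough that Lemma~\ref{l1} forces $t_{j+1}\leq\tau_m$ (using that the solution drifts); the lower bound $|\Pi_j(v)|\PS c_j^2$ dominates the Diophantine approximation error under $(iv)$; and the resulting $T_{j+1},L_{j+1}$ respect the stated polynomial-in-$\varepsilon^{-1}$ bounds, which later feed back into checking $(B_{j+1})$. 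None of the individual steps is hard — the curve-following comes from Proposition~\ref{cor}, the transversality from Lemma~\ref{l1} and the Diophantine Morse property, and the periodic approximation from Lemma~\ref{l2} — but threading the quantifiers on constants through the induction is the delicate part, and this is presumably why the authors refer to Proposition 3.11 of~\cite{BN09} for the analogous analytic argument.
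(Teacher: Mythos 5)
Your overall route is the one the paper intends (the paper itself simply defers to Proposition 3.12 of \cite{BN09}): use Proposition~\ref{cor} to confine $I^j(t)$ near the affine subspace $\lambda_j$, feed the projected curve $\Gamma_j(t)=I^j(t_j)+\Pi_j(I^j(t)-I^j(t_j))$ into Lemma~\ref{l1}, and then approximate $\nabla h(I^j(t_{j+1}))$ by a periodic vector via Lemma~\ref{l2}. But there is one concrete error. You take $c_j\EP\left(T_j\mu_jL_j^{-1}\right)^{2\tau}$, whereas the paper takes $c_j=\left(T_j\mu_jL_j^{-1}\right)^{\tau}$, and the hypotheses are calibrated to the latter choice: $(i)$ is literally $c_j\PM\mu_j$ (needed for $(a)$), $(ii)$ is literally $c_j\PM\gamma L_j^{-\tau}$ (needed to invoke Lemma~\ref{l1}), and $(iv)$ is literally $\mu_{j+1}\PM c_j^2$ (needed for independence).

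With your choice the independence step breaks down. Lemma~\ref{l1} gives the lower bound $\vert\Pi_j(\nabla h(\Gamma_j(t_{j+1})))\vert\PS c_j^2=\left(T_j\mu_jL_j^{-1}\right)^{4\tau}$, and you then assert that $(iv)$, i.e. $\mu_{j+1}\PM\left(T_j\mu_jL_j^{-1}\right)^{2\tau}$, pushes the approximation error below this lower bound. Since $T_j\mu_jL_j^{-1}\PM 1$, one has $\left(T_j\mu_jL_j^{-1}\right)^{2\tau}>\left(T_j\mu_jL_j^{-1}\right)^{4\tau}$, so being smaller than the $2\tau$-th power does not make the error smaller than the $4\tau$-th power: the inequality goes the wrong way, and hypothesis $(iv)$ as stated would be insufficient for your version of the construction. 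With $c_j=\left(T_j\mu_jL_j^{-1}\right)^{\tau}$ the lower bound is exactly $\left(T_j\mu_jL_j^{-1}\right)^{2\tau}$ and $(iv)$ does precisely the job you want. The rest of your outline --- using the drifting property to force the curve to attain length $c_j$ before $\tau_m$, deriving $(b)$ and $(c)$, and the bounds on $T_{j+1}$ and $L_{j+1}$ --- is consistent with the intended argument, modulo the constant-tracking you rightly flag as the delicate part.
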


Once again, the proof is completely similar to the proof of Proposition 3.12 in \cite{BN09}. As before, it uses the fact that our solution is drifting, Lemma~\ref{l1} applied to the curve $\Gamma_j(t)=I^j(t_j)+\Pi_j(I^j(t)-I^j(t_j))$ with $c_j=\left(T_j\mu_jL_j^{-1}\right)^\tau$, Lemma~\ref{l2} and Proposition~\ref{cor}. 

\subsection{Proof of Theorem~\ref{thmG} and Theorem~\ref{thmC}}\label{geo3}

We can finally prove our theorems, and once again we refer to the proof of Theorem 2.4 in \cite{BN09} for some more details.

\begin{proof}[Proof of Theorem~\ref{thmG} and Theorem~\ref{thmC}]
As a consequence of Propositions~\ref{etape1}, \ref{etape2} and a repeated use of Proposition~\ref{etape3}, we know that
\[ |I(t)-I(0)| < (n+1)^2\mu_0, \quad 0\leq t \leq \tau_m \]
provided that the parameters $\mu_0$, $m$ and $\varepsilon$ satisfy the following eleven conditions:
\begin{itemize}
\item[$(i)$] $\mu_{j+1} \PM \left(T_j\mu_jL_j^{-1}\right)^{2\tau}$, $j \in \{1,\dots,n-1\}$;
\item[$(ii)$] $\left(T_j\mu_jL_j^{-1}\right)^\tau \PM \mu_j$, for $j \in \{1,\dots,n-1\}$;
\item[$(iii)$] $mT_j\mu_j \PM 1$, for $j \in \{1,\dots,n\}$;
\item[$(iv)$] $\mu_1 \PM \mu_0^2$;
\item[$(v)$] $\varepsilon< \mu_j^2$, for $j \in \{1,\dots,n\}$;
\item[$(vi)$] $\left(T_j\mu_jL_j^{-1}\right)^\tau\PM\gamma L_j^{-\tau}$, for $j \in \{1,\dots,n-1\}$;
\item[$(vii)$] $T_j\mu_j \PM 1$, for $j \in \{1,\dots,n\}$; 
\item[$(viii)$] $\mu_j \PM 1$, for $j \in \{1,\dots,n\}$;
\item[$(ix)$] $\mu_{j}\PM \mu_{j-1}$, for $j \in \{2,\dots,n\}$;
\item[$(x)$] $\mu_0\PM \gamma$;
\item[$(xi)$] $\mu_0 \PM 1$;
\end{itemize}
where $\mu_j \EP T_{j}^{-1}\varepsilon^{a_j}$, with $a_j$ to be chosen for $j\in\{1,\dots,n\}$, and 
\begin{equation*}
1 \MP T_j \MP \varepsilon^{-a_{j}(n-1)}\mu_{0}^{-2}, \quad 1 \leq L_j \MP \max_{i\in\{1,\dots,j\}}\{\varepsilon^{-a_i(n-1)}\}\mu_0^{-2}.  
\end{equation*}
So let us choose $m \PE \varepsilon^{-a}$ and $r_0=\varepsilon^b$, for two positive constants $a$ and $b$. One can check (this is done in details in \cite{BN09}) that all these conditions hold true if we choose
\[ a_j=(2\tau (n+1))^{-n-1+j}, \quad j\in\{1,\dots,n\} \]
and
\[ a=b=3^{-1}(2\tau (n+1))^{-n}, \] 
provided $\varepsilon \leq \varepsilon_0$, with a sufficiently small $\varepsilon_0$ depending on $n,R,\alpha,L,E,M,\gamma$ and $\tau$ (resp. on $n,R,k,M,\gamma$ and $\tau$) in the Gevrey case (resp. in the $C^k$ case). Now recalling that for $\alpha$-Gevrey Hamiltonians, 
\[\tau_m=e^{m^{1/\alpha}},\] 
and for $C^k$-Hamiltonians, with $k\geq k^*n+1$ for $k^*\in\N^*$,
\[\tau_m=m^{k^*},\] 
this completes the proof of both theorems.
\end{proof}

\appendix

\section{Technical estimates}\label{tech}

In this short appendix, we give some technical estimates concerning Gevrey and finitely differentiable functions that we used in section~\ref{ana}.

\paraga First in the proof of Lemma~\ref{lemmelincomp}, we used the following estimate concerning the composition of Gevrey functions.

\begin{lemma}\label{estimcomp}
Let $0<\rho'<\rho$ and $L'=CL$. Suppose that $g\in G^{\alpha,L}(D_\rho)$ and that $\Phi : \mathcal{D}_{\rho'} \rightarrow \mathcal{D}_\rho$ is $(\alpha,L')$-Gevrey. If
\[ |\Phi-\mathrm{Id}|_{\alpha,L'}\MP 1, \]
then $f\circ\Phi \in G^{\alpha,L'}(\mathcal{D}_{\rho'})$ and
\[ |f\circ\Phi|_{\alpha,L'} \leq |f|_{\alpha,L}. \] 
\end{lemma}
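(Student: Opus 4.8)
The plan is to bound, for each multi-index $l\in\N^{2n}$, the $C^0$-norm of $\partial^l(f\circ\Phi)$ on $\mathcal{D}_{\rho'}$, and then to sum these bounds against the weights $L'^{|l|\alpha}(l!)^{-\alpha}$ defining $|f\circ\Phi|_{\alpha,L'}$. Note first that since $\Phi$ maps $\mathcal{D}_{\rho'}$ into $\mathcal{D}_\rho$, the composition $f\circ\Phi$ and all its derivatives are well defined on $\mathcal{D}_{\rho'}$, and $|(\partial^p f)\circ\Phi|_{C^0(\mathcal{D}_{\rho'})}\le|\partial^p f|_{C^0(\mathcal{D}_\rho)}$ for every $p\in\N^{2n}$.

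First I would apply the multivariate Fa\`a di Bruno formula, which expresses $\partial^l(f\circ\Phi)$ as a finite sum, indexed by the partitions of the set of ``slots'' of $l$ into nonempty blocks, of terms of the shape $\left((\partial^p f)\circ\Phi\right)\prod_{b}\partial^{l_b}\Phi_{i_b}$, where $b$ runs over the blocks of the partition, $l_b$ is the corresponding sub-multi-index ($\sum_b l_b=l$), $i_b\in\{1,\dots,2n\}$, and $p\in\N^{2n}$ records how many blocks are attached to each of the $2n$ coordinates, so that $|p|$ equals the number of blocks and $1\le|p|\le|l|$. Every occurrence of a derivative of $f$ is then controlled by the corresponding term of the series defining $|f|_{\alpha,L}$, while each factor $\partial^{l_b}\Phi_{i_b}$ (with $|l_b|\ge1$) is controlled by the series defining $|\Phi|_{\alpha,L'}$, and in fact — separating off the identity, which can only contribute through a block of size one bearing a single derivative — by $1+|\Phi-\mathrm{Id}|_{\alpha,L'}$, which the hypothesis $|\Phi-\mathrm{Id}|_{\alpha,L'}\MP1$ keeps bounded.

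Next I would carry out the bookkeeping of the Gevrey weights. Using $\prod_b(l_b!)\le l!$ one rewrites $(l!)^{-\alpha}$ as $(\prod_b l_b!)^{-\alpha}$ times the $\alpha$-th power of a multinomial-type coefficient; together with the number of partitions having a prescribed profile of block sizes, this produces a purely combinatorial factor which must be absorbed. This is exactly the point at which the width loss $L'=CL$ is used: in the target the weight attached to order $l$ is $L'^{|l|\alpha}=C^{|l|\alpha}L^{|l|\alpha}$, whereas the product of the weights naturally produced by the $f$-factor and the $\Phi$-factors carries only $L^{|p|\alpha}$ on the $f$-side; since $|l|\ge|p|$, the surplus $L^{(|l|-|p|)\alpha}$ together with $C^{|l|\alpha}$ is what one plays against the combinatorial constants and the dimension factor $(2n)^{|l|}$, and the explicit value $C=16^{-1}(2n)^{(1-\alpha)/\alpha}$ is tuned precisely so that the remaining geometric series sums to at most $1$, yielding the clean estimate $|f\circ\Phi|_{\alpha,L'}\le|f|_{\alpha,L}$ with no spurious multiplicative constant — the Banach-algebra structure of $G^{\alpha,L'}$ being used to control the products $\prod_b\partial^{l_b}\Phi_{i_b}$ term by term.

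I expect the combinatorial estimate of the previous paragraph to be the only real difficulty: bounding the number of set partitions and the associated multinomial coefficients raised to the power $\alpha\ge1$ by a fixed geometric factor per order of differentiation, uniformly in $l$. This is the recurring technical core of every Gevrey composition lemma, and the present statement is in substance a reformulation of the composition estimate of \cite{MS02}; accordingly I would either reproduce that argument or, more economically, simply verify that the hypotheses here match those of the relevant lemma in \cite{MS02} and invoke it. Everything else — passing from the $C^0$-bounds on the $\partial^l(f\circ\Phi)$ to the Gevrey norm, and the use of the inclusion $\Phi(\mathcal{D}_{\rho'})\subseteq\mathcal{D}_\rho$ — is routine.
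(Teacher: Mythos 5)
Your proposal is correct and matches the paper, which gives no independent proof of this lemma but simply invokes Corollary A.1 of Appendix A.2 in \cite{MS02} --- precisely the ``more economical'' option you offer at the end. Your Fa\`a di Bruno sketch, with the width loss $L'=CL$ absorbing the partition combinatorics, is the standard argument underlying that cited result, so nothing is missing.
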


This lemma is contained in the statement of Corollary A.1, Appendix A.2, in \cite{MS02}, to which we refer for a proof and a possible choice of implicit constant.

In fact, we could have used a more elaborated statement concerning compositions of Gevrey vector-valued functions (as in Proposition A.1 in \cite{MS02}), which would have implied that the diffeomorphisms $\Phi_j$ constructed in Lemma~\ref{lemmelincomp} are in fact $(\alpha,L_j)$-Gevrey, with an estimate on their distance to the identity. However, such a statement is not very elegant to state and it is not needed here.

\paraga Then, in the proof of Proposition~\ref{propcomp}, we used the following lemma which enabled us to bound the Gevrey norm of the derivatives of a function in terms of the Gevrey norm of the function.

\begin{lemma}\label{cauchy}
Let $\rho>0$ and $g\in G^{\alpha,L}(\mathcal{D}_\rho)$. For $p\in\N$, we have
\[ \sum_{l\in\N^{2n},\,|l|=p}|\partial^l g|_{\alpha,L/2}\MP|g|_{\alpha,L}. \]
\end{lemma}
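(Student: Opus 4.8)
The plan is to estimate each $|\partial^l g|_{\alpha,L/2}$ directly from the definition of the Gevrey norm and then sum over the finitely many multi-indices $l$ with $|l|=p$. Fix $l\in\N^{2n}$ with $|l|=p$. By definition,
\[ |\partial^l g|_{\alpha,L/2}=\sum_{q\in\N^{2n}}(L/2)^{|q|\alpha}(q!)^{-\alpha}|\partial^{q}\partial^l g|_{C^0(\mathcal{D}_{\rho})}=\sum_{q\in\N^{2n}}(L/2)^{|q|\alpha}(q!)^{-\alpha}|\partial^{q+l} g|_{C^0(\mathcal{D}_{\rho})}. \]
Reindexing by $r=q+l$ (so $r\geq l$ componentwise and $|r|=|q|+p$), each term is
\[ (L/2)^{(|r|-p)\alpha}((r-l)!)^{-\alpha}|\partial^{r} g|_{C^0(\mathcal{D}_{\rho})}. \]
The aim is to compare this with the $r$-th term of $|g|_{\alpha,L}$, namely $L^{|r|\alpha}(r!)^{-\alpha}|\partial^r g|_{C^0(\mathcal{D}_\rho)}$; so I need to control the ratio
\[ \frac{(L/2)^{(|r|-p)\alpha}((r-l)!)^{-\alpha}}{L^{|r|\alpha}(r!)^{-\alpha}}=2^{-(|r|-p)\alpha}L^{-p\alpha}\left(\frac{r!}{(r-l)!}\right)^{\alpha}. \]

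The key elementary inequality is the multinomial-type bound $r!/(r-l)!\leq r!/((r-l)!\,l!)\cdot l!\leq \binom{|r|}{|l|,\dots}\cdots$; more simply, $\dfrac{r!}{(r-l)!}\leq \dfrac{|r|!}{(|r|-p)!}\leq |r|^{p}$ componentwise, and one also has the crude bound $\dfrac{r!}{(r-l)!\,l!}\le \binom{|r|}{p}\le 2^{|r|}$. Combining, $\left(\frac{r!}{(r-l)!}\right)^{\alpha}\le (l!)^{\alpha}\,2^{|r|\alpha}$. Hence the ratio is bounded by $2^{-(|r|-p)\alpha}L^{-p\alpha}(l!)^{\alpha}2^{|r|\alpha}=2^{p\alpha}L^{-p\alpha}(l!)^{\alpha}2^{\alpha\cdot 0}\le (2/L)^{p\alpha}(l!)^{\alpha}\cdot 2^{p\alpha}$, which is a constant depending only on $p,L,\alpha$ (and $l$, but $l!$ is bounded once $|l|=p$). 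This gives $|\partial^l g|_{\alpha,L/2}\le C(p,L,\alpha,l)\,|g|_{\alpha,L}$, and summing over the $\binom{p+2n-1}{2n-1}$ multi-indices $l$ with $|l|=p$ yields $\sum_{|l|=p}|\partial^l g|_{\alpha,L/2}\MP|g|_{\alpha,L}$.

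The main obstacle is getting the factor $2^{-(|r|-p)\alpha}$ (coming from the shrinkage $L\to L/2$) to absorb the combinatorial blow-up $r!/((r-l)!\,l!)\le 2^{|r|}$ arising from the shifted factorials — this is precisely the analogue of the classical Cauchy estimate $|\partial^p g|_{\rho/2}\lesssim \rho^{-p}p!\,|g|_{\rho}$ and is where the loss in the domain of analyticity (here, in the Gevrey width) is spent. One must be slightly careful that $\alpha\ge 1$ so that $((r-l)!\,l!/r!)^{\alpha}\le (r-l)!\,l!/r!$ fails in the wrong direction; instead one uses $r!/((r-l)!\,l!)\le 2^{|r|}$ raised to the power $\alpha$ and checks $2^{(|r|-p)\alpha}\ge 2^{|r|\alpha}/2^{p\alpha}$ comfortably dominates the residual $2^{|r|\alpha}$ up to the constant $2^{p\alpha}$. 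The summation over $l$ and the reindexing are routine. I would also note that the implicit constant genuinely depends on $p$ (and on $\alpha,L$), which is consistent with the $\MP$ notation and with the way the lemma is invoked in the proof of Proposition~\ref{propcomp}, where $p=2$ is fixed.
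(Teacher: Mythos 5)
Your argument is correct and is essentially the standard computation to which the paper delegates this lemma (Lemma A.1 of Marco--Sauzin, which the paper cites instead of giving a proof): reindex $r=q+l$, bound the multinomial factor by $r!/((r-l)!\,l!)\le 2^{|r|}$, and observe that the factor $2^{-(|r|-p)\alpha}$ gained from halving $L$ exactly absorbs $2^{|r|\alpha}$, leaving the $r$-independent constant $(2/L)^{p\alpha}(l!)^{\alpha}$. The only cosmetic remark is that your intermediate detour through $r!/(r-l)!\le |r|^{p}$ is unnecessary (though it too would work, since $|r|^{p\alpha}2^{-(|r|-p)\alpha}$ is bounded for fixed $p$), and the dependence of the implicit constant on $p$ is harmless since the lemma is only invoked with $p=2$.
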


For an easy proof and a possible implicit constant, we refer to Lemma A.1, Appendix A.1 in \cite{MS02}. 

\paraga Finally, we shall also need corresponding estimates for the $C^k$ norms, which are well-known and much more easy to prove.

The above lemma is the analogue of Lemma~\ref{estimcomp}, and it is needed in the proof of Lemma~\ref{lemmelincomp2}.

\begin{lemma}\label{estimcomp2}
Let $0<\rho'<\rho$, suppose that $g\in C^k(D_\rho)$ and that $\Phi : \mathcal{D}_{\rho'} \rightarrow \mathcal{D}_\rho$ is of class $C^k$. If
\[ |\Phi-\mathrm{Id}|_{k}\MP 1, \]
then $f\circ\Phi \in C^k(\mathcal{D}_{\rho'})$ and
\[ |f\circ\Phi|_{k} \leq |f|_{k}. \] 
\end{lemma}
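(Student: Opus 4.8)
The plan is to reduce everything to a bound on the sup-norms $|\partial^l(f\circ\Phi)|_{C^0(\mathcal{D}_{\rho'})}$ for $|l|\le k$ and then to sum these against the weights $(l!)^{-1}$ that define $|\,.\,|_k$. Two essentially equivalent routes are available. The first is an induction on $k$: for $k=0$ the inequality is trivial since $|f\circ\Phi|_{C^0(\mathcal{D}_{\rho'})}\le |f|_{C^0(\mathcal{D}_\rho)}$, where one uses that $\Phi(\mathcal{D}_{\rho'})\subseteq\mathcal{D}_\rho$; for the inductive step one differentiates, $\partial_j(f\circ\Phi)=\sum_i(\partial_i f\circ\Phi)\,\partial_j\Phi_i$, and invokes the Banach algebra property of $|\,.\,|_k$ together with the inductive hypothesis applied to the functions $\partial_i f$. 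The second route, which I would prefer because it produces the constant in one stroke and runs in perfect parallel with the Gevrey case, is to use the Fa\`a di Bruno formula directly.

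Concretely, I would write $\Phi=\mathrm{Id}+\Psi$, so that by choosing the implicit constant in the hypothesis $|\Phi-\mathrm{Id}|_k\MP 1$ small enough (depending only on $n$ and $k$) one may assume $|\Psi|_k\le\delta$ for a prescribed small $\delta>0$. For $1\le|l|\le k$ the Fa\`a di Bruno formula expresses $\partial^l(f\circ\Phi)$ as a finite sum of terms $c_l\,\big((\partial^p f)\circ\Phi\big)\prod_{s=1}^{r}\partial^{q_s}\Phi_{i_s}$ with explicit positive integer coefficients $c_l$, where $q_1+\cdots+q_r=l$, each $q_s\neq 0$, and $p$ records the multiplicities of the indices $i_s$. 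Taking $C^0$-norms on $\mathcal{D}_{\rho'}$, bounding $|(\partial^p f)\circ\Phi|_{C^0(\mathcal{D}_{\rho'})}\le|\partial^p f|_{C^0(\mathcal{D}_\rho)}$, and splitting in each factor $\partial^{q_s}\Phi_{i_s}$ the contribution of the identity (which occurs only when $q_s$ is a first-order index, in which case the factor equals $1$) from the contribution of $\Psi$, one is left with a sum of products of a single factor $|\partial^p f|_{C^0}$ and several factors $|\partial^{q}\Psi_i|_{C^0}$.

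The heart of the matter is the purely combinatorial check that, after multiplication by $(l!)^{-1}$ and summation over $l$, the Fa\`a di Bruno coefficients are exactly absorbed by these factorial weights: the terms in which every $\Phi$-factor is an identity contribution reassemble into the expression controlling $|f|_k$, while all the remaining terms are bounded by $|f|_k$ times a polynomial $P(|\Psi|_k)$ with $P(0)=0$ whose coefficients depend only on $n$ and $k$. This is the same bookkeeping that underlies the Banach algebra property of $|\,.\,|_k$, now carried out with one of the factors being itself a composition, and choosing $\delta$ small enough it gives the stated inequality. Thus the only genuine obstacle is this combinatorial estimate; it is handled exactly as in the Gevrey setting (Corollary A.1 of \cite{MS02}, which underlies Lemma~\ref{estimcomp}), with the simplification that here no Gevrey weights $L^{|l|\alpha}(l!)^{-\alpha}$ have to be tracked, and the nesting of domains $\mathcal{D}_{\rho'}$, $\Phi(\mathcal{D}_{\rho'})$, $\mathcal{D}_\rho$ enters only to make sense of $(\partial^p f)\circ\Phi$ and to bound its sup-norm. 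This is why the lemma is elementary and we do not spell out the details.
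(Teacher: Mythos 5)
The paper itself gives no proof of this lemma (it is dismissed as ``well-known''), so there is no argument of the author's to compare with; your Fa\`a di Bruno route is the natural one and its bookkeeping is essentially correct up to the very last line. The genuine gap is that last line. Your decomposition produces the main term $\sum_{|l|\le k}(l!)^{-1}|(\partial^l f)\circ\Phi|_{C^0(\mathcal{D}_{\rho'})}\le |f|_k$ \emph{plus} a remainder bounded by $|f|_k\,P(|\Psi|_k)$, where $P$ has nonnegative coefficients and $P(0)=0$; in total this gives $|f\circ\Phi|_k\le\bigl(1+P(\delta)\bigr)|f|_k$. Choosing $\delta$ small makes $1+P(\delta)$ close to $1$ but never $\le 1$, so it does not yield the stated inequality with constant exactly $1$ --- and that inequality is in fact false as written. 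For $k=1$ take $f(\theta,I)=N^{-1}\sin(NI_1)$ and $\Phi(\theta,I)=(\theta,(1+\delta)I)$, which maps $\mathcal{D}_{\rho'}$ into $\mathcal{D}_{\rho}$ with $|\Phi-\mathrm{Id}|_1=n\delta(\rho'+1)$ as small as one wishes; for $N$ large one finds $|f|_1=N^{-1}+1$ while $|f\circ\Phi|_1=N^{-1}+(1+\delta)>|f|_1$. The mechanism that saves the Gevrey analogue (Lemma~\ref{estimcomp}) is precisely what is missing here: there the left-hand norm carries the strictly smaller width $L'=CL$, and the geometric factor $C^{|l|\alpha}$ absorbs both the Fa\`a di Bruno constants and the $\Psi$-remainder, whereas here the weights $(l!)^{-1}$ are identical on both sides and nothing absorbs the loss. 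Restricting to the smaller domain $\mathcal{D}_{\rho'}$ does not help either, since sup-norms of the derivatives of $f$ need not decrease under restriction.

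The fix is harmless for the paper: what your argument actually proves, and what is the correct statement, is $|f\circ\Phi|_k\MP|f|_k$ (say $\le 2|f|_k$ once $\delta$ is small enough), and this is all that is ever used --- in the proof of Lemma~\ref{lemmelincomp2} the lemma is applied at most $n$ times in a chain whose final bound is a $\MP$ in any case. So you should either weaken the conclusion to $|f\circ\Phi|_k\MP|f|_k$, or, if a clean constant $1$ is desired, put a weight $C^{|l|}(l!)^{-1}$ with $C<1$ on the left-hand norm, mimicking the Gevrey statement. With that correction your proof sketch is sound; the remaining combinatorial verification (that the Fa\`a di Bruno coefficients divided by $l!$ are dominated by the product of the weights $(p!)^{-1}\prod_s(q_s!)^{-1}$) is exactly the submultiplicativity computation you invoke and presents no difficulty.
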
  

Finally, here's an easy analogue of Lemma~\ref{cauchy} which is useful in the proof of Proposition~\ref{propcomp2}. 

\begin{lemma}\label{cauchy2}
Let $\rho>0$ and $g\in C^k(\mathcal{D}_\rho)$. For $0\leq p\leq k$, we have
\[ \sum_{l\in\N^{2n},\,|l|=p}|\partial^l g|_{k-p}\MP|g|_{k}. \]
\end{lemma}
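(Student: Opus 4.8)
The plan is to expand both sides directly from the definition of the $C^k$-norm, $|H|_k=\sum_{|q|\leq k}(q!)^{-1}|\partial^q H|_{C^0(\mathcal{D}_\rho)}$, and then compare, coefficient by coefficient, the contribution of each sup-norm $|\partial^q g|_{C^0(\mathcal{D}_\rho)}$ on the two sides. First I would observe that for a fixed multi-index $l\in\N^{2n}$ with $|l|=p$,
\[ |\partial^l g|_{k-p}=\sum_{|m|\leq k-p}(m!)^{-1}|\partial^{l+m}g|_{C^0(\mathcal{D}_\rho)}, \]
which uses only that $g\in C^k$, since all derivatives occurring have order $|l+m|\leq k$. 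Summing over $l$ with $|l|=p$ and interchanging the two finite sums, the left-hand side becomes
\[ \sum_{|q|\leq k}\Big(\sum_{\substack{l\leq q,\;|l|=p}}((q-l)!)^{-1}\Big)|\partial^q g|_{C^0(\mathcal{D}_\rho)}, \]
where $l\leq q$ denotes the componentwise order. Comparing with $|g|_k$, it then suffices to bound the bracketed coefficient by $c\,(q!)^{-1}$, i.e. to show that $\sum_{l\leq q,\;|l|=p}q!/(q-l)!\leq c$ for a constant $c$ depending only on $n$ and $k$.

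The only ingredient for this is an elementary multi-index estimate: for $l\leq q$ with $|l|=p$ one has
\[ \frac{q!}{(q-l)!}=\prod_{i=1}^{2n}\frac{q_i!}{(q_i-l_i)!}=\prod_{i=1}^{2n}q_i(q_i-1)\cdots(q_i-l_i+1)\leq\prod_{i=1}^{2n}q_i^{\,l_i}\leq|q|^{\,|l|}\leq k^{\,p}\leq k^{\,k}, \]
while the number of multi-indices $l\in\N^{2n}$ with $|l|=p\leq k$ is $\binom{p+2n-1}{2n-1}\leq\binom{k+2n-1}{2n-1}$. Hence one may take $c=\binom{k+2n-1}{2n-1}k^{k}$, which depends only on $n$ and $k$, and we conclude $\sum_{|l|=p}|\partial^l g|_{k-p}\leq c\,|g|_k\MP|g|_k$.

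There is no serious obstacle here; the entire argument is bookkeeping with multi-indices. The only points needing a little care are that the inner sum over $l$ with $l\leq q$ has a number of terms bounded independently of $g$, $\rho$ and $q$, and that the resulting constant genuinely depends only on the dimension and on $k$, so the shorthand $\MP$ is legitimate in the $C^k$ setting with the conventions fixed in the text. (The same scheme, \emph{mutatis mutandis}, underlies Lemma~\ref{cauchy} in the Gevrey case; the extra work there is keeping track of the Gevrey weights $L^{|l|\alpha}(l!)^{-\alpha}$, which is why a factor is lost in passing from $L$ to $L/2$.)
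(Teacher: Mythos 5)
Your argument is correct and complete: the reindexing $q=l+m$ is a genuine bijection between the pairs $(l,m)$ with $|l|=p$, $|m|\le k-p$ and the pairs $(q,l)$ with $|q|\le k$, $l\le q$, $|l|=p$, and the bound $\sum_{l\le q,\,|l|=p}q!/(q-l)!\le\binom{k+2n-1}{2n-1}k^{k}$ gives a constant depending only on $n$ and $k$, as the convention $\MP$ requires in the $C^k$ setting. The paper itself offers no proof of this lemma (it is dismissed as "well-known and much more easy to prove" than its Gevrey counterpart), and your direct coefficient-by-coefficient bookkeeping is precisely the elementary verification that is being taken for granted, so there is nothing to compare against and nothing to add.
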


{\it Acknowledgments.} This work has been written up while the author served as a Research Fellow at Warwick University, through CODY network.

\addcontentsline{toc}{section}{References}
\bibliographystyle{amsalpha}
\bibliography{NekGC}

\end{document}